\documentclass[11pt]{article}
\usepackage{amsmath,amssymb,amsthm,bm,color}

\usepackage[colorlinks=true,anchorcolor=blue,filecolor=blue,linkcolor=red,urlcolor=blue,citecolor=blue]{hyperref}  
\usepackage[usenames,dvipsnames,svgnames,table]{xcolor}

\usepackage{graphicx,tikz,caption,subcaption}
\usetikzlibrary{patterns}
\usetikzlibrary{shapes}
\usepackage{fullpage}   
\usepackage{enumitem,verbatim}
\usepackage{multicol}
\usepackage{float}
\usepackage{booktabs}  

\usepackage[capitalise]{cleveref}

\usepackage{geometry}
\newtheorem{theorem}{Theorem}[section]
\newtheorem{corollary}[theorem]{Corollary}
\newtheorem{lemma}[theorem]{Lemma}
\newtheorem{proposition}[theorem]{Proposition}

\newtheorem{example}[theorem]{Example}
\newtheorem{claim}{Claim}

\theoremstyle{definition}
\newtheorem{definition}[theorem]{Definition}

\newtheorem{remark}{Remark}
\theoremstyle{plain}

\newcommand{\Ktt}{K_{t,t}}
\newcommand{\inj}{\mathrm{inj}}
\newcommand{\homc}{\mathrm{hom}}
\newcommand{\Aut}{\mathrm{Aut}}

\usepackage{todonotes}

\newenvironment{poc}{\begin{proof}[Proof of claim]}{\end{proof}}

\begin{document}

\title{Spectral Sidorenko inequalities and edge-spectral supersaturation}

\author{
Yongtao Li\thanks{Yau Mathematical Sciences Center, Tsinghua University, Beijing, China. Email: \url{ytli0921@hnu.edu.cn}.}
\and
Wilson Lin\thanks{Anysphere Inc., CA, USA. Email: \url{wilson@anysphere.co}.}
\and
Hong Liu\thanks{Extremal Combinatorics and Probability Group (ECOPRO), Institute for Basic Science (IBS), Daejeon, South Korea. Email: \url{hongliu@ibs.re.kr}. Supported by the Institute for Basic Science (IBS-R029-C4).}
\and
Shengtong Zhang\thanks{Anysphere Inc., CA, USA. Email: \url{shengtong@anysphere.co}.}
}

\date{\today}
\maketitle

\vspace{-0.5cm} 
\begin{abstract}
We develop a spectral approach to Sidorenko-type inequalities and apply
it to establish sharp edge-spectral supersaturation results.  Let \(H\) be a bipartite
graph with \(v\) vertices and \(e\) edges, where \(v\le e\), and write
\(M(G)=2e(G)\).  We prove that Sidorenko's conjecture is equivalent to a spectral
strengthening:
\[
        \hom(H,G)\ge M(G)^e |V(G)|^{v-2e} \quad \text{ if and only if }\quad \hom(H,G)\ge \lambda(G)^{2e-v}M(G)^{v-e}.
\]
We also introduce an operator-norm certificate which, via the 
Riesz--Thorin interpolation, gives direct proofs of the spectral
Sidorenko inequality in several cases.  The converse direction in the equivalence
theorem is proved by a tensor-power spectral regularization lemma.

{Our main result provides a unified framework to prove sharp asymptotic edge-spectral supersaturation results for degenerate bipartite graphs with the Sidorenko property, including complete bipartite graphs and even cycles.} 
Let $S_{t-1,m}$ be the split graph with $m$ edges 
obtained by joining a clique $K_{t-1}$ to an independent set. For every $m$-edge graph \(G\) with $\lambda(G)>\lambda(S_{t-1,m})$, 
$$\texttt{\#} K_{t,t}(G)
        \ge
        \Big(\frac{2^{-(t-1)^2}}{(t!)^2}-o(1)\Big)m^t \quad \text{and}\quad
 \texttt{\#}C_{2t}(G)
        \ge
        \Big(\frac{(t-1)!}{2t^t}-o(1)\Big)m^t.$$
Both constants are best possible: the first is attained asymptotically
by random graphs, while the second is
attained by split graphs.  The supersaturation proofs combine
spectral Sidorenko inequalities with a heavy-edge pruning process, a Perron-vector localized/delocalized dichotomy, and incidence-matrix inequalities.
\end{abstract}


\section{Introduction}

A central theme in extremal graph theory is to understand how global
density conditions force copies of a fixed graph.  For bipartite graphs,
one of the most influential principles of this kind is Sidorenko's
conjecture, which predicts that random graphs minimize homomorphism
counts among graphs with a fixed edge density.

This paper develops a spectral form of Sidorenko's inequality and applies
it to sharp edge-spectral supersaturation. The spectral
radius is the natural parameter in spectral extremal graph theory, and
in several basic cases the spectral form of Sidorenko's inequality is
more natural than the usual average-degree form.  For instance, for every $t\ge 2$, 
\begin{equation}\label{eq:even-cycle}
    \homc(C_{2t},G)
        =
        \operatorname{tr} A(G)^{2t}
        \ge
        \lambda(G)^{2t},
\end{equation}
which immediately gives the spectral Sidorenko inequality for even cycles.
For complete bipartite graphs, we prove the corresponding spectral Sidorenko 
inequality through an operator-norm certificate and Riesz--Thorin
interpolation.

Our first main theorem shows that, for bipartite graphs with at most as
many vertices as edges, the spectral form is actually equivalent to the
usual Sidorenko inequality.  Thus the spectral viewpoint is not a
strictly stronger conjecture, but rather a different and often useful
form of the same phenomenon.

We then use this equivalence as a counting tool in edge-spectral
supersaturation.
We prove sharp asymptotic lower bounds
for the number of copies of \(K_{t,t}\) and \(C_{2t}\) in any \(m\)-edge
graph whose spectral radius exceeds the relevant split-graph extremal
threshold.  A notable feature is that the two sharp constructions are
different: balanced random graphs are extremal for \(K_{t,t}\), whereas
split graphs are extremal for \(C_{2t}\).

\subsection{Spectral Sidorenko inequalities}

For two graphs \(G\) and \(H\), let \(\homc(H,G)\) be the number of graph
homomorphisms from \(H\) to \(G\), and let \(\inj(H,G)\) be the number of
labelled injective homomorphisms.  We write  $\texttt{\#}H(G):=\frac{\inj(H,G)}{|\Aut(H)|}$
for the number of unlabelled copies of \(H\) in \(G\).  We also write $|G|:=|V(G)|$ and $M(G):=2e(G).$

Sidorenko's conjecture~\cite{Sidorenko1993CorrelationInequality}
asserts that, for every bipartite graph \(H\) with \(v\) vertices and
\(e\) edges,
$        \homc(H,G)\ge |G|^{v-2e} M(G)^e $
for every graph \(G\).  Equivalently, if
        $p:=\frac{M(G)}{|G|^2}$ is the edge density of \(G\), then a uniformly random map
\(V(H)\to V(G)\) is a homomorphism with probability at least \(p^e\).

Let \(A(G)\) be the adjacency matrix of \(G\), and let \(\lambda(G)\) be
its spectral radius.  Since
        $\lambda(G)$ is at least the average degree  $\frac{M(G)}{|G|},$
it is natural to ask whether Sidorenko's inequality can be strengthened
by replacing part of the average-degree term with  the spectral radius.  For
a bipartite graph \(H\) with \(v\) vertices and \(e\) edges, the natural
spectral form is
       $ \homc(H,G)\ge \lambda(G)^{2e-v}M(G)^{v-e}.$
This inequality immediately implies Sidorenko's
inequality when $H$ has no isolated vertices, since $\lambda(G)\ge \frac{M(G)}{|G|}$.

Our first theorem shows that, perhaps surprisingly, the converse also holds.

\begin{theorem}[Equivalence of Sidorenko and spectral Sidorenko]
\label{thm:equivalence}
Let \(H\) be a bipartite graph with \(v\) vertices and \(e\) edges, where
\(v\le e\).  Then the following are equivalent.
\begin{itemize}
    \item[\rm(i)]
    \(H\) satisfies Sidorenko's inequality: for every graph \(G\), 
    $$\homc(H,G)\ge |G|^{v-2e}M(G)^e.$$

    \item[\rm(ii)]
    \(H\) satisfies the edge-spectral Sidorenko inequality: for every graph \(G\), 
    \begin{equation}
\label{eq:spectral-sidorenko-general}
        \homc(H,G)\ge \lambda(G)^{2e-v}M(G)^{v-e}.
\end{equation}

\item[\rm (iii)]
$H$ satisfies the vertex-spectral Sidorenko inequality: for every graph $G$, 
\[  \homc (H,G)\ge \lambda(G)^e |G|^{v-e}. \] 
\end{itemize}
\end{theorem}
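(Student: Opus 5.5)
The implications (ii)$\Rightarrow$(iii)$\Rightarrow$(i) are elementary comparisons of the right-hand sides. For every graph $G$ we have $M(G)\le \lambda(G)|G|$. Since $v-e\le 0$, this gives
\[
\lambda^{2e-v}M^{v-e}=\lambda^e|G|^{v-e}\Big(\tfrac{M}{\lambda |G|}\Big)^{v-e}\ge \lambda^e|G|^{v-e}.
\]
Similarly, $\lambda\ge M/|G|$ together with $e\ge 0$ gives
\[
\lambda^e|G|^{v-e}\ge M^e|G|^{v-2e}.
\]
So the whole content of the theorem is (i)$\Rightarrow$(ii), and I would prove it by a tensor-power amplification.

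Let $G^{\times k}$ denote the $k$-fold tensor (categorical) power of $G$. Its standard properties are
\[
\homc(H,G^{\times k})=\homc(H,G)^k,\qquad \lambda(G^{\times k})=\lambda(G)^k,\qquad M(G^{\times k})=M(G)^k .
\]
Suppose we can find a subgraph $G_k\subseteq G^{\times k}$ with $N$ vertices and average degree $d$ satisfying $d\ge \lambda(G)^k e^{-o(k)}$. Apply (i) to $G_k$ and use $v-e\le 0$, $2e-v\ge 0$ and $Nd=M(G_k)\le M(G)^k$:
\[
\homc(H,G)^k\ge \homc(H,G_k)\ge N^{v-2e}(Nd)^e=(Nd)^{v-e}d^{2e-v}\ge M(G)^{k(v-e)}\lambda(G)^{k(2e-v)}e^{-o(k)} .
\]
Taking $k$-th roots and letting $k\to\infty$ yields (ii). Note that the hypothesis $v\le e$ is used exactly here, to make a small vertex count $N$ harmless.

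The key step, and the expected main obstacle, is the spectral regularization lemma: $G^{\times k}$ contains a subgraph of average degree at least $(\lambda(G)-o(1))^k$. The star $K_{1,s}$ shows this genuinely needs the tensor power. There every subgraph has average degree below $2$, while $\lambda=\sqrt{s}$. In $K_{1,s}^{\times k}$, however, the vertices with centre coordinates in a fixed half of the positions and leaves elsewhere are completely joined to the complementary pattern. This gives a copy of $K_{s^{k/2},s^{k/2}}$, of average degree $\lambda^k$.

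For general $G$ I would imitate this with the Perron vector $x$.
\begin{enumerate}
\item Discretize the entries of $x$ into $L$ geometric levels, so that $G$'s vertices fall into level classes $V_1,\dots,V_L$. Use the eigen-equation $\lambda x_u=\sum_{w\sim u}x_w$ to see that $\lambda$ is, up to a factor $1+\varepsilon$, the spectral radius of the $L\times L$ quotient matrix $B_{ij}=e(V_i,V_j)$ weighted by level ratios.
\item In $G^{\times k}$, restrict to vertices whose coordinate-level sequence has a prescribed empirical type $\tau$, chosen as the typical type of the Perron eigenvector of $B$.
\item Estimate the edges between two matched type classes by a product of edge counts. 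Then show that among the polynomially many (in $k$) type classes, the best pair has edge count divided by vertex count at least $\lambda^k e^{-(\varepsilon+o(1))k}$; this is a method-of-types/entropy computation.
\end{enumerate}

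If the type counting becomes messy, a fallback is a cleaner alternative route. Note that $x^{\otimes k}$ is the Perron vector of $G^{\times k}$. Take the bipartite double cover of $G^{\times k}$, restricted to vertices whose weights $x^{\otimes k}$ lie in dyadic windows $[2^a,2^{a+1})$ and $[2^b,2^{b+1})$. Pigeonhole over the $O(k)$ pairs of windows to find one pair carrying a $1/\mathrm{poly}(k)$ fraction of $x^{\top}A x=\lambda^k$. On that pair, the $x$-weights are nearly constant on each side. Then a Cauchy--Schwarz bound converts the bilinear mass into average degree at least $\lambda^k/\mathrm{poly}(k)$ in a subgraph, which suffices since the polynomial loss vanishes after taking $k$-th roots.
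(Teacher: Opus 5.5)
The reductions in your proposal are correct, and a little leaner than the paper's. The implication (ii)$\Rightarrow$(iii) follows directly from $M\le\lambda|G|$ and $v\le e$. Since Sidorenko's inequality applies to any host graph, it is enough to find a subgraph of $G^{\otimes k}$ with average degree $\lambda^k2^{-o(k)}$ and $M\le M(G)^k$; the paper's regularity of $F_k$ is only used through $M_k=N_kd_k$.

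\textbf{The gap.} It lies in the regularization lemma, which carries all the content. In your fallback, the pigeonholed window pair together with $|W_a|\le 2^{-2a}$ and $|W_b|\le 2^{-2b}$ gives only
$e(W_a,W_b)\gtrsim\lambda^k\sqrt{|W_a||W_b|}/\mathrm{poly}(k)$.
This bounds the geometric mean of the two side degrees, not the average degree, and the loss is real. For $u\in W_a$ the eigen-equation gives $2^b\deg_{W_b}(u)\le\lambda^kX_u<\lambda^k2^{a+1}$. Hence every subgraph of the $W_a$--$W_b$ graph has average degree below $4\lambda^k2^{-|a-b|}$.

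A concrete failure: take $K_{1,s}^{\otimes k}$ with $s\ge4$. The window pair with $j\approx k/2-\sqrt k$ and $k-j$ leaf coordinates carries mass of order $\lambda^k/\sqrt k$, so it passes your pigeonhole threshold. Yet it is a disjoint union of copies of $K_{s^j,s^{k-j}}$, all of whose subgraphs have average degree below $2\lambda^ks^{-\sqrt k}$. So the step ``Cauchy--Schwarz gives average degree $\lambda^k/\mathrm{poly}(k)$'' is false for a pair that merely passes the threshold. Nothing in your proposal shows $|a-b|=o(k)$.

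\textbf{The missing ingredient and two fixes.} The missing ingredient is exactly what the paper's proof is built on. Restrict to a component with spectral radius $\lambda$. On ordered edges of $G^{\otimes k}$, the normalized mass $X_uX_v/\lambda^k$ is the product measure $p^{\otimes k}$, where $p_{ij}=a_{ij}x_ix_j/\lambda$ is symmetric with both marginals equal to $\pi_i=x_i^2$.
\begin{itemize}
\item Concentration: $\log_2(X_u/X_v)$ is a sum of $k$ i.i.d.\ bounded terms with symmetric law. By Hoeffding, every window pair of mass at least $\lambda^k/\mathrm{poly}(k)$ has $|a-b|=O(\sqrt{k\log k})$, which repairs your bound to $\lambda^k2^{-o(k)}$.
\item Symmetrization: for $a\ne b$, the sets $W_a\times W_b$ and $W_b\times W_a$ in $G^{\otimes 2k}$ span $e(W_a,W_b)^2$ edges on $2|W_a||W_b|$ vertices. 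This gives average degree at least $\lambda^{2k}/\mathrm{poly}(k)$.
\end{itemize}
With either fix, your fallback becomes a complete proof. It is more elementary than the paper's (no Stirling, no entropy), though it yields only average degree rather than a regular subgraph, which is all the equivalence needs.

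\textbf{Your route 1.} This is essentially the paper's method of types, but it omits precisely what makes it work. The paper fixes vertex type $\pi$ and joint edge type $p$. Because both marginals of $p$ equal $\pi$, every edge of that joint type stays inside a single vertex type class. So there is no imbalance, and $F_k$ is regular of degree $\prod_i n_i!/\prod_{i,j}N_{ij}!$. The identity $H(p)-H(\pi)=\log\lambda$ then makes this degree at least $\lambda^kk^{-C}$. Discretizing $x$ into levels is unnecessary, since $G$ is fixed and $x^{\otimes k}$ is constant on exact type classes.
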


We remark that the assumption \(v\le e\) in Theorem~\ref{thm:equivalence} is necessary.
The spectral Sidorenko inequality~\eqref{eq:spectral-sidorenko-general} fails already for \(H=P_3\). Indeed, considering $G:=K_{1,t}\sqcup t^2 K_2$, we have $\lambda (G)=\sqrt{t}$. Then $\lambda(G)M(G)=2t^{5/2}+O(t^{3/2})$ and 
$\lambda(G)^2|G|=2t^3+ O(t^2)$, whereas 
$\homc(P_3,G)=3t^2+O(t)$. Thus, for large \(t\), \(\homc(P_3,G)\ll \lambda(G)M(G) \ll \lambda(G)^2|G| \).

We also introduce a direct route to the spectral Sidorenko inequality~\eqref{eq:spectral-sidorenko-general}.  For \(v\le e\), set $s_H:=\frac{2e}{v}$ and $s_H':=\frac{2e}{2e-v}$.
We say that \(H\) satisfies the \emph{operator-norm Sidorenko
certificate} if
\[
        \homc(H,G)\ge \|A(G)\|_{s_H'\to s_H}^{\,e}
\]
for every graph \(G\).  The Riesz--Thorin interpolation shows that this
certificate implies the spectral Sidorenko inequality (Lemma~\ref{lem:operator-certificate}).  Hence, whenever the certificate can be verified directly, it gives an
analytic proof of the spectral Sidorenko inequality. For \(K_{t,t}\), applying H\"older's inequality gives the certificate 
$        \homc(K_{t,t},G)\ge \|A(G)\|_{t/(t-1)\to t}^{t^2}$ (Lemma~\ref{lem:operator-certificate-Ktt}). Interpolation then yields
\begin{equation}
\label{eq:Ktt}
        \homc(K_{t,t},G)
        \ge
        \frac{\lambda(G)^{2t(t-1)}}{M(G)^{t(t-2)}}.
\end{equation}
For $C_{2t}$, the operator-norm certificate is exactly~\eqref{eq:even-cycle}. Thus, the spectral form is not merely a formal consequence of Sidorenko's conjecture; in some cases it is the more transparent inequality to prove.

Theorem~\ref{thm:equivalence} allows us to transfer known cases of Sidorenko's conjecture~\cite{Sidorenko1993CorrelationInequality, ConlonFoxSudakov2010, Hatami2010, KimLeeLee2016, ConlonLee2017, ConlonKimLeeLee2018} directly to the spectral setting.  The operator-norm certificate also gives direct proofs for
complete bipartite graphs and even cycles.  For many weakly norming examples, including hypercubes and the
reflection-graph examples of Conlon--Lee~\cite{ConlonLee2017}, the
certificate can be obtained from the corresponding weighted Sidorenko inequalities or
the graph-norm H\"older inequalities.  Since weakly norming graphs are
already known to be Sidorenko, this should be viewed as a spectral
transfer principle rather than a new proof of Sidorenko for those
families.

The certificate is nevertheless not merely a reformulation of weakly
norming.  In Section~\ref{sec:proof-spectral-Sido}, we prove that it is
stable under \(1\)-subdivision (Proposition~\ref{prop:subdivision-closure-certificate}).  Consequently, since \(K_{a,b}\) satisfies
the certificate, so does its \(1\)-subdivision.  For \(a\ne b\), this
subdivision is not edge-transitive and hence cannot be weakly norming. Thus, the operator-norm certificate gives spectral
Sidorenko inequalities for examples outside the weakly norming class.

The main ingredient in the proof of Theorem~\ref{thm:equivalence} is a
spectral regularization lemma.

\begin{theorem}
\label{thm:regular-subgraphs}
Let \(G\) be a finite graph with \(M(G)>0\), and let
\(\lambda=\lambda(G)\).  Then there exists a constant \(C=C(G)>0\) such
that, for infinitely many positive integers \(k\), the tensor power
\(G^{\otimes k}\) contains a regular subgraph \(F_k\) of degree \(d_k\)
satisfying      $\lambda^k k^{-C}\le d_k\le \lambda^k .$
Moreover, $ M(F_k)\le M(G)^k.$
\end{theorem}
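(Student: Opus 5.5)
We will build \(F_k\) from walks in \(G\), using the Perron vector of \(A=A(G)\) to count them and a pigeonhole over vertex types to make them regular. First we reduce to the case where \(G\) is connected and \(\lambda\) is attained on a component \(G_0\). A regular subgraph of \(G_0^{\otimes k}\) is also a regular subgraph of \(G^{\otimes k}\), and \(M(G_0)\le M(G)\). So we may assume \(G\) is connected with Perron vector \(x>0\).

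The tensor power \(G^{\otimes k}\) has vertex set \(V(G)^k\), with \((u_1,\dots,u_k)\sim(w_1,\dots,w_k)\) iff \(u_iw_i\in E(G)\) for all \(i\). Its adjacency matrix is \(A^{\otimes k}\), so the vertex degree of \(\mathbf u\) is \(\prod_i \deg(u_i)\), and \(M(G^{\otimes k})=M(G)^k\). Any subgraph \(F_k\) therefore automatically satisfies \(M(F_k)\le M(G)^k\). Likewise, \(\lambda(F_k)\le\lambda(G^{\otimes k})=\lambda^k\), and a \(d\)-regular graph has spectral radius \(d\), which gives the upper bound \(d_k\le\lambda^k\). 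The only real task is to find a regular subgraph with \(d_k\ge \lambda^k k^{-C}\).

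\textbf{Type classes.} For \(\mathbf u\in V(G)^k\), let its type be the vector \(\tau(\mathbf u)\in\mathbb Z_{\ge0}^{V(G)}\) counting occurrences of each vertex. For an edge \(\mathbf u\mathbf w\), also record its joint type: the counts \(N_{ab}\) of coordinates with \((u_i,w_i)=(a,b)\), for each ordered edge \((a,b)\). There are at most \((k+1)^{M(G)}\) joint types.

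Fix a symmetric joint type \(N\), meaning \(N_{ab}=N_{ba}\), with marginal type \(\tau\) given by \(\tau_a=\sum_b N_{ab}\). Consider the bipartite-free construction: let \(F\) be the subgraph of \(G^{\otimes k}\) on the type class \(T_\tau=\{\mathbf u:\tau(\mathbf u)=\tau\}\), with edges exactly the pairs of joint type \(N\). This graph is regular by symmetry. The symmetric group \(S_k\) acts transitively on \(T_\tau\) and preserves joint types, so every vertex has the same number of neighbours. That number is
\[
d_N=\prod_a \binom{\tau_a}{(N_{ab})_b}.
\]
Because \(N\) is symmetric, the relation is symmetric, so \(F\) is a genuine \(d_N\)-regular graph. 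Choosing \(k\) so that suitable integer \(N\) exist is the reason for taking infinitely many \(k\).

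\textbf{Choosing \(N\) via the Perron vector.} Set \(p_{ab}=x_a x_b/\lambda\) for \(ab\in E\), normalised so that \(\sum_a x_a^2=1\). Then \(\sum_b p_{ab}=x_a^2\), so \(p\) is a symmetric probability distribution on ordered edges with marginal \(x_a^2\). Take \(k\) to be multiples of a common denominator after rational approximation, and round \(kp\) to a symmetric integer matrix \(N\) with \(|N-kp|=O(1)\).

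By Stirling's formula, up to factors of \(k^{-O(1)}\),
\[
\log d_N=k\Big[H(p)-H(x^2)\Big],
\]
where \(H(p)\) is the entropy of \(p\) and \(H(x^2)\) is the entropy of the marginal. We need this quantity to equal \(\log\lambda\). Compute:
\[
H(p)-H(x^2)=-\sum_{ab} p_{ab}\log\frac{p_{ab}}{x_a^2}=-\sum p_{ab}\log\frac{x_b}{\lambda x_a}=\log\lambda,
\]
since \(\sum_{ab} p_{ab}\log x_b=\sum_{ab} p_{ab}\log x_a\) by symmetry. This is the standard fact that the Parry measure attains \(\log\lambda\).

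\textbf{Main obstacle.} The hard part is controlling the rounding. When \(p\) is irrational we cannot take \(N=kp\) exactly. Instead we choose an infinite sequence of \(k\) with integer symmetric \(N\) within \(O(1)\) of \(kp\), which is always possible by rounding the upper triangle. We then check that the multinomial estimate loses only a polynomial factor \(k^{-C}\), with \(C\) depending on \(|V(G)|\) and \(e(G)\).

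A perturbation of size \(O(1)\) in \(N\) changes \(k[H(N/k)-H(\tau/k)]\) by only \(O(1)\cdot\log k\). The first-order term vanishes because \(p\) maximises \(H(p)-H(\text{marginal})+\sum p\log A\) over edge-supported distributions. Even if we do not invoke stationarity, the derivative is bounded, since \(p_{ab}>0\) on all edges. So the change is \(O(\log k)\) in the exponent, giving a factor \(k^{-C}\), and the Stirling error terms are also polynomial in \(k\).

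Combining these steps gives \(\lambda^k k^{-C}\le d_k\le\lambda^k\) for infinitely many \(k\), together with \(M(F_k)\le M(G)^k\).
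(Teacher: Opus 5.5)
Your proposal is correct and follows essentially the same route as the paper. Both arguments use the Perron-weighted edge distribution $p_{ab}=x_ax_b/\lambda$ with marginal $x_a^2$, the entropy identity $H(p)-H(\pi)=\log\lambda$, the type class with edges of a fixed symmetric joint type $N$ (regular with multinomial degree), Stirling for $d_k\ge\lambda^k k^{-C}$, and spectral-radius monotonicity for $d_k\le\lambda^k$. The differences are minor: you get regularity from $S_k$-transitivity rather than counting directly, and you should state explicitly that a symmetric zero-diagonal $N$ with $\sum_{a,b}N_{ab}=k$ forces $k$ to be even, which is why the paper works along $k=2s$.
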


It is known that Sidorenko's conjecture admits reductions to
regular host graphs in the ordinary average-density setting; see
Coregliano--Razborov~\cite{CoreglianoRazborov2021Biregularity}, building
on Szegedy~\cite{Szegedy2015SparseGraphLimits}.  These reductions can not
directly apply to the spectral formulation, where the spectral radius may
be much larger than the average degree.  Theorem~\ref{thm:regular-subgraphs}
provides the needed spectral regularization: inside tensor powers, it
finds regular subgraphs whose degrees capture \(\lambda(G)\), rather
than the average degree \(M(G)/|G|\).

\subsection{Edge-spectral supersaturation}

We now turn to the application of spectral Sidorenko inequalities to
edge-spectral supersaturation.  Given a fixed graph \(F\), one asks how
many copies of \(F\) must appear in an \(m\)-edge graph \(G\) once
\(\lambda(G)\) exceeds the maximum possible spectral radius of an
\(F\)-free \(m\)-edge graph.

This is closely related to the Brualdi--Hoffman--Tur\'an problem, which
asks for the maximum spectral radius of a graph with a forbidden subgraph
and a given number of edges. 
For example, a well-known theorem of Nikiforov \cite{Niki2002} shows that every $K_{r+1}$-free graph $G$ with $m$ edges satisfies $\lambda(G)^2 \le (1-\frac{1}{r})2m$; see \cite{ChaoYu2026JLMS} for an entropic extension.   
There is now a substantial literature on
this edge-spectral Tur\'{a}n-type problem; see, e.g.,~\cite{ZLS2021,LZS2024,LLLY2025,LLZ2025,LiLiuZhang2025NosalBooksC4,ZhaiLiLou2026,LZZ2024}. 
Li--Liu--Zhang~\cite{LiLiuZhang2025EdgeSpectralESS}
proved an Erd\H{o}s--Stone--Simonovits type theorem in this setting:
if \(\chi(F)=r+1\ge3\), then every \(F\)-free graph \(G\) satisfies $\lambda(G)^2\le \left(1-\frac1r+o(1)\right)2m$. 
Moreover, Li--Liu--Zhang \cite{LiLiuZhang2025ColorCritical} extended Nikiforov's theorem to color-critical graphs, and they established some Tur\'{a}n-type results for almost-bipartite graphs, in which the extremal graphs are often split graphs. For integers \(k\ge1\) and \(m\ge\binom{k}{2}\), 
we write
$m-\binom{k}{2}=kq+r$ with $q\ge 0$ and $0\le r\le k-1.$ 
The {\it split graph} \(S_{k,m}\) is obtained by joining a clique \(K_k\) to an
independent set of size \(q\), and adding one further vertex adjacent to
exactly \(r\) vertices of the clique.  For complete bipartite graphs,
Li--Liu--Zhang~\cite{LiLiuZhang2025ColorCritical} proved the following
exact result.

\begin{theorem}[Li--Liu--Zhang~\cite{LiLiuZhang2025ColorCritical}]
\label{thm-Ktt-free}
For each fixed \(t\ge2\) and all sufficiently large \(m\), every
\(K_{t,t}\)-free \(m\)-edge graph \(G\) satisfies
       $ \lambda(G)\le \lambda(S_{t-1,m}),$
with equality if and only if \(G=S_{t-1,m}\).
\end{theorem}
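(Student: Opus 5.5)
The plan is the standard spectral stability method: take an extremal graph, use its Perron vector to find a dominant set of \(t-1\) vertices, and then finish with edge switchings. Let \(G\) maximize \(\lambda(G)\) among \(K_{t,t}\)-free graphs with \(m\) edges, and suppose \(\lambda:=\lambda(G)\ge \lambda(S_{t-1,m})\). We may assume \(G\) has no isolated vertices and is connected: otherwise we move edges from a non-principal component onto the component carrying \(\lambda\) without creating a \(K_{t,t}\).

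\emph{Step 1: the benchmark value.} Since \(\lambda(K_{t-1}\vee \overline{K_q})\) is the largest root of \(\lambda^2-(t-2)\lambda-(t-1)q=0\), and \((t-1)q\approx m-\binom{t-1}{2}\), we get
\[
\lambda(S_{t-1,m})=\sqrt{m}+\tfrac{t-2}{2}+O(m^{-1/2}).
\]
The extremal graph therefore satisfies
\[
\lambda^2-(t-2)\lambda\ge m-\tbinom{t-1}{2}-O(1).
\]

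\emph{Step 2: eigen-equation counting.} Let \(x\) be the Perron vector normalized so that \(x_u=\max_w x_w=1\). Then
\[
\lambda^2=\lambda^2x_u=d_u+\sum_{v\ne u}|N(u)\cap N(v)|\,x_v .
\]
Subtract \((t-2)\lambda=(t-2)\sum_{v\in N(u)}x_v\) and split the sum according to whether \(v\in N(u)\) or not. The terms count the edges inside \(N(u)\), weighted by \(x\), and the edges from \(N(u)\) to \(V\setminus N[u]\), again weighted. Comparing with \(m=e(N(u))+e(N(u),\overline{N[u]})+d_u+e(\overline{N[u]})\), we obtain an inequality of the form
\[
\sum_{v\in N(u)}\bigl(d_{N(u)}(v)-(t-2)\bigr)x_v\ \ge\ e(N(u))+e\bigl(\overline{N[u]}\bigr)-\sum_{w\notin N[u]}\bigl(1-x_w\bigr)d_{N(u)}(w)-O(1).
\]
Because \(G[N(u)]\) is \(K_{t-1,t}\)-free, Kővári–Sós–Turán forces all but \(o(\sqrt m)\) of the vertices of \(N(u)\) to have degree at most \(t-2\) inside \(N(u)\). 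From this we extract that \(e(\overline{N[u]})=o(m)\), that \(d_u\ge(1-o(1))\sqrt m\), and that the positive contributions come from a set \(L\subseteq N(u)\) of \(o(\sqrt m)\) vertices of large degree.

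\emph{Step 3: the dominant set.} Iterate the argument of Step 2 with \(u\) replaced by the vertices of \(L\). Combined with the bound \(\sum_v x_v^2\)-type estimates coming from \(\lambda x_w\le\sqrt{d_w}\,\|x\|\), this should show the following:
\begin{itemize}
\item there is a set \(K\) of exactly \(t-1\) vertices with \(x_w\ge 1-o(1)\) for every \(w\in K\);
\item every other vertex has \(x_w=O(m^{-1/2})\);
\item all but \(o(m)\) edges are incident to \(K\).
\end{itemize}
A set \(K\) of \(t\) vertices with large entries is impossible: their common neighbourhood would then have size \(\ge t\), giving a copy of \(K_{t,t}\). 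Fewer than \(t-1\) such vertices is impossible because then \(\lambda^2-(t-2)\lambda\) would fall below the benchmark of Step 1 by \(\Omega(\sqrt m)\).

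\emph{Step 4: exact structure by switching.} Let \(R=V\setminus K\). Suppose some edge \(e\) lies inside \(R\), or \(K\) is not a clique, or some vertex of \(R\) misses a vertex of \(K\) while another vertex of \(R\) is adjacent to it. In each case we delete an edge \(e\) with small weight \(x_ax_b=O(1/m)\) and add an edge \(wz\) with \(w\in K\) and \(x_wx_z\) larger. By the Rayleigh quotient this strictly increases \(\lambda\), and \(K_{t,t}\)-freeness is preserved: any \(K_{t,t}\) would need \(t\) vertices of large degree on one side, but only \(t-1\) vertices are available. Iterating, the graph becomes \(K\vee \overline{K_q}\) plus one vertex attached to \(r\) vertices of \(K\), which is exactly \(S_{t-1,m}\). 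Strictness of each switch gives uniqueness.

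The main obstacle is Step 3. We must show quantitatively that exactly \(t-1\) vertices carry almost all the Perron mass, with the error terms precise enough to feed the switching in Step 4. The delicate point is controlling the \(x\)-weighted edges inside \(R\) to accuracy \(o(\sqrt m)\), which is what the final exact comparison with \(\lambda(S_{t-1,m})\) requires.
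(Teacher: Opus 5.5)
The paper does not prove this statement. It is quoted from \cite{LiLiuZhang2025ColorCritical} and used only as a black box in the bad-edge arguments of Lemmas~\ref{lem:lemmaD-localized-sparse} and~\ref{lem:cycle-localized-sparse}, and there only for graphs with a vertex cover of at most $t-1$ vertices. So your outline has to stand on its own, and as written it has a genuine gap.

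You correctly identify Step~3 as the main obstacle, but it is asserted (``this should show'') rather than derived, and it carries essentially all of the content of the theorem. Iterating Step~2 at a vertex $v\in L$ loses the maximality $x_w\le x_u$ that signed the terms there. You would therefore need $x_v$ extremely close to $x_u$, together with $o(\sqrt m)$ control of every other contribution to $\lambda^2-(t-2)\lambda$. This precision is unavoidable: since $\lambda(S_{k,m})=\sqrt m+\frac{k-1}{2}+O(m^{-1/2})$, a configuration with fewer than $t-1$ dominant vertices misses your benchmark by only $\Theta(\sqrt m)$ in $\lambda^2-(t-2)\lambda$. Nothing in Steps~1--2 supplies it. Likewise, ``$t$ vertices with large entries have at least $t$ common neighbours'' does not follow from large Perron entries. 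It presupposes your second bullet (all other entries $O(m^{-1/2})$, hence heavy vertices of degree $\Theta(m)$) plus a counting argument, so as stated the exclusion is circular.

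The inputs to Step~3 are also faulty. In Step~2 the term $\sum_{w\notin N[u]}(1-x_w)d_{N(u)}(w)$ enters with a plus sign. Using $\sum_{v\in N(u)}d_{N(u)}(v)x_v=\sum_{vv'\in E(G[N(u)])}(x_v+x_{v'})$, the correct inequality is
\[
\sum_{vv'\in E(G[N(u)])}\bigl(x_v+x_{v'}-1\bigr)\ \ge\ (t-2)\lambda+e\bigl(G[V\setminus N[u]]\bigr)+\sum_{w\notin N[u]}(1-x_w)\,d_{N(u)}(w)-O(1).
\]
With your sign, the term controlling edges that leave $N(u)$ is lost.

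More seriously, K\H{o}v\'ari--S\'os--Tur\'an is applied at the wrong scale. In $S_{t-1,m}$ the Perron-maximal vertex is a clique vertex with $d_u\approx m/(t-1)$, for which KST gives only $e(G[N(u)])=O(m^{2-1/(t-1)})$, no better than trivial. Indeed, for $t\ge3$ the graph $G[N(u)]$ has about $(t-2)m/(t-1)$ edges there, so bounding the left side by $e(G[N(u)])$ cannot give $e(G[V\setminus N[u]])=o(m)$. Nor does $K_{t-1,t}$-freeness restrict degrees inside $N(u)$. Take the $K_{t,t}$-free graph formed by copies of $K_{t+1}$ sharing one vertex $u$ (the friendship graph when $t=2$): every vertex of $N(u)$ has $t-1$ neighbours in $N(u)$. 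The degree pattern you want must come from the spectral hypothesis.

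In Step~4 the reason given for preserving $K_{t,t}$-freeness is wrong, since a $K_{t,t}$ need not contain $t$ vertices of large degree. When $|K|=t-1$, every $K_{t,t}$ uses at least $t$ edges inside $R$. While $E(G[R])\neq\varnothing$, a single switch can complete one: take parts $K\cup\{y\}$ and $t$ neighbours of $y$ in $R$ that are complete to $K$ except for the added edge. The usual repair is to delete all edges not covered by $K$ at once, so that $K$ becomes a vertex cover of size $t-1$. You then weigh the gain of order $m^{-1/2}$ per re-inserted edge (cf.\ Lemma~\ref{lem:Skm-increment}) against the loss, which again needs the quantitative output of Step~3.

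As an aside, the paper's own machinery gives an approximate, non-circular substitute for Step~3. Run the case analysis in the proof of Theorem~\ref{thm:sharp-Ktt} on a $K_{t,t}$-free graph above the threshold. Every branch other than alternative~(ii) of Theorem~\ref{thm:sparseC-rowcover} produces copies of $K_{t,t}$, and none of these branches uses the present theorem. The bad-edge argument of Lemma~\ref{lem:lemmaD-localized-sparse} then reduces $\lambda(G)\le\lambda(S_{t-1,m})$ to graphs with a vertex cover of at most $t-1$ vertices. The equality case would still need a separate argument.
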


In many classical extremal problems for bipartite forbidden graphs,
sharp or near-sharp constructions are pseudorandom or algebraic.  The
edge-spectral setting exhibits a different phenomenon: the extremal
configuration for forbidding \(K_{t,t}\) is the highly inhomogeneous
split graph \(S_{t-1,m}\).

Supersaturation asks what happens beyond such an extremal threshold.  The
first supersaturation theorem is due to Erd\H{o}s and
Rademacher~\cite{Erd1962a,Erdos1964}, who showed that every
\(n\)-vertex graph with more than \(\lfloor n^2/4\rfloor\) edges contains
at least \(\lfloor n/2\rfloor\) triangles.  Supersaturation has since
been extensively studied; see, for example,
\cite{LiuPikhurkoStaden2020,Reiher2016,Mubayi2010CountingSubstructuresI,
PikhurkoYilma2017Supersaturation,MY2025}.  Spectral supersaturation was
initiated by Bollob\'as--Nikiforov~\cite{BN2007jctb}; they proved that
every graph \(G\) contains at least
        $\frac13\lambda(G)\bigl(\lambda(G)^2-m\bigr)$
triangles. Recently, 
Fang--Lin--Zhai \cite{FangLinZhai2026} studied 
the edge-spectral supersaturation for color-critical subgraphs.  
Ning--Zhai~\cite{NZ2021,NZ2021b} proved that if
\(\lambda(G)>\sqrt m\), then \(G\) contains at least
\(\lfloor \frac{1}{2}(\sqrt m-1)\rfloor\) triangles and at least
\(\frac1{2000}m^2\) copies of \(C_4\).  The \(C_4\) bound was sharpened
by Li--Liu--Zhang~\cite{LiLiuZhang2025NosalBooksC4}.

\begin{theorem}[Li--Liu--Zhang~\cite{LiLiuZhang2025NosalBooksC4}]
\label{thm:counting-C4}
Every \(m\)-edge graph \(G\) with \(\lambda(G)>\sqrt m\) contains at
least        $\left(\frac18-o(1)\right)m^2$
copies of \(C_4\).  Moreover, the constant \(1/8\) is best possible.
\end{theorem}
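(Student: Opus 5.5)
The statement is the case $t=2$ of the even-cycle supersaturation bound announced in the abstract, since $\frac{(2-1)!}{2\cdot 2^2}=\frac18$. I would prove it directly from a second-moment identity for the Perron vector, rather than from the crude inequality \eqref{eq:even-cycle}.

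\emph{Sharpness.} The split graph $K_2\vee \overline{K_{q}}$ with $m=2q+1$ has $\lambda>\sqrt m$. Its $C_4$'s are essentially the pairs of independent vertices together with the two clique vertices, giving $\binom q2\approx m^2/8$ copies. (The star $S_{1,m}$ has $\lambda=\sqrt m$ exactly and no $C_4$, which is why the threshold is $\sqrt m$.)

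\emph{Reduction to codegrees.} Let $x$ be the unit Perron vector, and write $c(u,v)$ for the codegree of $u$ and $v$. Expanding $A^2$ gives
\[
\lambda^2=\sum_v d_v x_v^2+\sum_{u\ne v} c(u,v)\,x_ux_v ,
\qquad \texttt{\#}C_4(G)=\tfrac12\sum_{\{u,v\}}\binom{c(u,v)}{2}.
\]
The hypothesis $\lambda^2>m$ then reads
\[
\sum_{u\neq v}c(u,v)x_ux_v> m-\sum_v d_vx_v^2=\sum_{uv\in E}\bigl(1-x_u^2-x_v^2\bigr)\;(\ge 0 \text{ after handling edges with } x_u^2+x_v^2>1,\text{ which cannot happen}).
\]
So the codegree form must beat a quantity measuring how much Perron mass avoids each edge. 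By Cauchy--Schwarz with $\sum_{u\ne v}x_u^2x_v^2\le 1-\sum x_v^4$, we get
\[
\sum_{\{u,v\}} c(u,v)^2 \gtrsim \frac{\bigl(m-\sum_v d_vx_v^2\bigr)^2}{1-\sum_v x_v^4}.
\]
Each $c^2$ term converts into $\binom{c}{2}$ up to a lower-order $\sum c(u,v)=\sum_w\binom{d_w}{2}$ correction. That correction must be controlled separately, and it is where the dichotomy comes in.

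\emph{Localized vs.\ delocalized.}
\begin{itemize}
\item \emph{Delocalized case:} $\max_v x_v^2\le\varepsilon$. Here $\sum_v d_vx_v^2$ and the degree-sum error are small compared to $m$ once high-degree vertices are pruned. Following the heavy-edge pruning idea of the abstract, I would delete vertices of degree $\ge \varepsilon m$ only if they carry little Perron mass, so that $\lambda^2$ drops by $o(m)$. The Cauchy--Schwarz bound then gives $\Omega(m^2)$ with a constant far above $1/8$.
\item \emph{Localized case:} some vertices carry Perron mass bounded below. Then $\lambda^2\le \sum_v d_vx_v^2+\ldots$ forces the Perron mass onto few vertices $w_1,\dots,w_k$ with $x_{w_i}^2$ bounded below. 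Since a single vertex caps $\lambda^2$ at $d_w\le m$, the strict inequality $\lambda^2>m$ forces at least two heavy vertices $w_1,w_2$ of degree $\approx\alpha m$ and $\beta m$ with large common neighbourhood. The count $\binom{c(w_1,w_2)}{2}$ alone then gives the $C_4$'s. I would optimize $c(w_1,w_2)$ subject to $d_{w_1}+d_{w_2}\le m$ and the eigen-equation $\lambda x_{w_i}=\sum_{u\sim w_i}x_u$, and expect the optimum $c\approx m/2$, yielding $\frac12\binom{m/2}{2}\cdot 2\approx m^2/8$.
\end{itemize}

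\emph{Main obstacle.} The hard step is the intermediate regime between the two cases, where Perron mass is spread over a bounded number of moderately heavy vertices. Here one must show that the stability of $\lambda^2\approx m$ forces the split-like structure $K_2\vee\overline{K_q}$ up to $o(m)$ edges. I would attempt this by an incidence-matrix inequality on the bipartite graph between the heavy set $W$ and $V\setminus W$. The spectral radius of the $W$-part is bounded via $\|B\|^2\le\max$ row sum of $BB^{T}$. The goal is to show that $\lambda^2-m>0$ forces $\sum_{\{w,w'\}\subseteq W}c(w,w')^2\ge (1/2-o(1))\,(m/2)^2\cdot 2$ in aggregate, and then to convert this into $(1/8-o(1))m^2$ copies of $C_4$ via $\binom c2$ convexity.
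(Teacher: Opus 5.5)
The genuine gap is the localized regime. This is where all the difficulty lies, and you leave it as a heuristic; the tools you propose for it would not work.

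First, your Cauchy--Schwarz bound is useless there. In the extremal graph $K_2\vee\overline{K_q}$ itself, $\sum_v d_vx_v^2\approx m/4$ and $\sum_v x_v^4\approx\frac18$, while the correction $\frac14\sum_w\binom{d_w}{2}\approx m^2/16$ is comparable to the main term. With the constant corrected as below, the bound gives only about $m^2/56$.

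Second, the structural target you state is false: $\lambda^2\approx m$ does not force $K_2\vee\overline{K_q}$ up to $o(m)$ edges. Indeed $K_{1,m}$ has $\lambda^2=m$ and no $C_4$, and $K_{3,q}$ plus one edge inside the $3$-side has $\lambda>\sqrt m$ but is far from $K_2\vee\overline{K_q}$. What is true is a dichotomy: either at least two aligned heavy rows, or a near-star. The near-star branch can only be excluded by using $\lambda>\sqrt m$ at scale $1/\sqrt m$ per edge, as in the proof of Lemma~\ref{lem:lemmaD-localized-sparse}. Deleting the $d=o(m)$ edges that miss the centre leaves a star. Lemma~\ref{lem:spectral-vertex-cut}(c) shows those edges carry Perron weight $o(d/\sqrt m)$, so $\lambda\le\sqrt{m-d}+o(d/\sqrt m)\le\sqrt m-\frac{d}{2\sqrt m}+o(d/\sqrt m)<\sqrt m$.

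Third, for two or more heavy vertices, the bound $\|B\|^2\le$ (maximum row sum of $BB^{\mathsf T}$) that you propose is too lossy for a sharp constant. Join $w_1$ to an independent set $S$ of size $m/2$, and join $w_2,w_3$ to complementary halves of $S$. The row of $w_1$ sums to $m$, yet there are only about $m^2/16$ copies of $C_4$; the true value of $\|B\|^2$ is $3m/4$.

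What works is the near-rank-one argument of Theorem~\ref{thm:sparseC-rowcover}. When almost all edges go from a set $A$ with $|A|=o(\sqrt m)$ to an independent set, $\lambda\ge\sqrt m$ forces $\|M\|^2=(1-o(1))\|M\|_{\mathrm F}^2$. This gives aligned rows with comparable degrees and pairwise common neighbourhoods $(1-o(1))\sqrt{d_ad_{a'}}$, and $r\ge2$ aligned rows yield $\binom r2\binom L2\ge(\frac18-o(1))m^2$. A dense core of moderately heavy vertices also needs its own argument; the paper uses the ACD partition and the spectral cut estimate.

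Your delocalized case is also misjudged. For unordered pairs, Cauchy--Schwarz gives $\sum_{\{u,v\}}c(u,v)^2\ge(\lambda^2-\sum_vd_vx_v^2)^2/\bigl(2(1-\sum_vx_v^4)\bigr)$. You dropped the factor $\frac12$; with it, the delocalized case yields $(\frac18-o(1))\lambda^4$, not a constant far above $\frac18$. It must be so: the random graphs of Example~\ref{exam:sharpness-Bt} with $t=2$ are delocalized, satisfy $\lambda>\sqrt m$, and have $(\frac18+o(1))m^2$ copies of $C_4$.

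Hence every loss in the delocalized case must be $o(m^2)$, so your pruning must remove total Perron mass $o(1)$. The condition $\max_vx_v^2\le\varepsilon$ does not ensure this. Take $K_{k,q}$ plus one edge with $k\approx 1/(2\varepsilon)$: it has $\max_vx_v^2\approx\varepsilon$, but mass $\frac12$ sits on $k$ vertices of degree $2\varepsilon m$, and deleting them destroys everything. You need two separated scales, and such graphs should go to the structural branch.

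For context, the paper does not prove this statement. It quotes it from Li--Liu--Zhang, whose proof rests on the identity $\texttt{\#}C_4(G)=\frac18\sum_i(\lambda_i^4+\lambda_i^2)-\frac14\sum_id_i^2$. The paper's own framework starts at $t\ge3$, and its $t=2$ cases are deduced from this statement, because the Case~2 comparison ($1-4\eta>\frac{1}{t-1}$) fails for $t=2$.
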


The main contribution of this paper 
is to develop a unified method for proving
exact edge-spectral supersaturation results for degenerate bipartite graphs with the Sidorenko property.   
Our next result gives the sharp supersaturation theorem for \(K_{t,t}\) for every $t\ge 2$  
above the extremal spectral radius in Theorem~\ref{thm-Ktt-free}.  When
\(t=2\), this recovers Theorem~\ref{thm:counting-C4}, since
\(K_{2,2}=C_4\) and \(\lambda(S_{1,m})=\sqrt m\).

\begin{theorem}\label{thm:sharp-Ktt}
Let $t\ge 2$ and $G$ be an $m$-edge graph. If $\lambda(G)> \lambda(S_{t-1,m})$, then
\[
   \texttt{\#}\Ktt(G) \ge
   \left( \frac{2^{-(t-1)^2}}{(t!)^2} -o(1)\right)m^t .
\]
The constant is best possible, as witnessed by $m$-edge random graphs on
$(2+o(1))\sqrt m$ vertices.
\end{theorem}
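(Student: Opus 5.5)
The plan is to combine the spectral Sidorenko inequality \eqref{eq:Ktt} with a separate estimate on degenerate (non-injective) homomorphisms, and to split into cases according to how localized the Perron vector is.

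\textbf{Main term.} First I would compute the threshold. Since $S_{t-1,m}$ is (up to one pendant-type vertex) the join of $K_{t-1}$ with an independent set of size about $m/(t-1)$, one gets $\lambda(S_{t-1,m})=\sqrt m+\frac{t-2}{2}+o(1)$. In particular $\lambda(G)>\lambda(S_{t-1,m})$ gives $\lambda(G)\ge \sqrt m$.

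Plugging this into \eqref{eq:Ktt} with $M(G)=2m$ gives
\[
\homc(K_{t,t},G)\ge \frac{m^{t(t-1)}}{(2m)^{t(t-2)}}=2^{-t(t-2)}m^t .
\]
We have $\inj(K_{t,t},G)=\homc(K_{t,t},G)-(\text{non-injective homs})$ and $|\Aut(K_{t,t})|=2(t!)^2$. Also $2^{-t(t-2)}/(2(t!)^2)=2^{-(t-1)^2}/(t!)^2$, which is exactly the claimed constant. So it suffices to show that, whenever the count of copies is not already much larger for other reasons, the non-injective homomorphisms number $o(m^t)$.

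\textbf{Bounding degenerate homomorphisms.} A non-injective homomorphism factors through a proper quotient of $K_{t,t}$. Each such quotient has at most $2t-1$ vertices, and because of loops it has no edges inside one side. The danger comes from vertices of huge degree: a star alone already has $\homc(K_{t-1,t})\approx m^t$. I would therefore do a heavy-vertex pruning.

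Fix a small $\delta>0$ and call $v$ heavy if $d(v)\ge\delta m$. There are at most $2/\delta$ heavy vertices. If $\Delta(G)\le\delta m$, then a Friedgut--Kahn/Hölder-type bound applies. The point is that each quotient either merges two vertices on the same side, leaving a vertex whose degree is charged to $\Delta$, or, if the identified vertices lie on different sides, a loop would be needed, which is impossible. This should give at most $C_t\,\delta\, m^t$ degenerate homomorphisms, so $\texttt{\#}K_{t,t}(G)\ge (2^{-(t-1)^2}/(t!)^2-C_t\delta)m^t$.

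\textbf{Localized/delocalized dichotomy.} In general I would delete the heavy set $L$ and compare $\lambda(G-L)$ with $\lambda(G)$ via the Perron vector $x$.

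\emph{Delocalized case.} If the Perron mass on $L$ is small, say $\sum_{v\in L}x_v^2\le\eta$, then $\lambda(G-L)\ge(1-O(\sqrt\eta))\lambda(G)\ge(1-o(1))\sqrt m$. Applying the bounded-degree argument to $G-L$ finishes this case with an $o(1)$ loss.

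\emph{Localized case.} Here the Perron vector puts substantial mass on $O(1)$ heavy vertices. I would run a stability analysis in the spirit of Theorem~\ref{thm-Ktt-free}, using the eigen-equation $\lambda^2x_u=\sum_{w}\sum_{z\sim w}x_z$ together with incidence-matrix inequalities. The aim is to show that $\lambda>\lambda(S_{t-1,m})$ forces at least $t$ vertices whose common neighbourhood has size $\Omega(m)$. Such a configuration already yields $\binom{\Omega(m)}{t}\cdot\Theta(1)$ copies of $K_{t,t}$. If the degrees are near $m/t$, this count is about $m^t/(t^t\,t!)$, which comfortably exceeds $2^{-(t-1)^2}m^t/(t!)^2$.

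\textbf{Expected obstacle.} The hardest step is the localized case. The split graph $S_{t-1,m}$ itself has $t-1$ heavy vertices and only $O(m^{t-1})$ copies of $K_{t,t}$, so the argument must extract a $t$-th heavy vertex, or enough extra spectral weight, from the tiny excess $\lambda-\lambda(S_{t-1,m})$. This requires a careful quantitative comparison of $\lambda^2$ with $m$ plus the contributions of edges inside the heavy set. A mixed regime, with partial localization, must interpolate between the two arguments. There I would choose the threshold $\eta$ depending on $\delta$ and use a continuity argument on the Perron mass.

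Finally, sharpness follows by computing, in $G(n,p)$ with $n=(2+o(1))\sqrt m$ and $p\approx 1/2$, that $\lambda\approx np\approx\sqrt m$ is above the threshold, which requires a slight density boost, and that $\texttt{\#}K_{t,t}\approx n^{2t}p^{t^2}/(2(t!)^2)$, which matches the constant.
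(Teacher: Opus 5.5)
\textbf{Verdict: genuine gap.} Your main-term computation and your delocalized case are correct, and the latter takes a different and simpler route than the paper. Every non-injective homomorphism factors through $K_{t-1,t}$. So there are at most $t(t-1)\,\Delta(G)\,(2m)^{t-1}$ of them: place a perfect matching of $K_{t-1,t-1}$ as ordered edges, then choose one more neighbour. If $\sum_{v\in L}x_v^2\le\eta$, then $\lambda(G-L)\ge(1-2\eta)\lambda(G)$, because $2\bm x_L^{\mathsf T}A_{L,\bar L}\bm x_{\bar L}+\bm x_L^{\mathsf T}A_{L,L}\bm x_L\le 2\lambda\|\bm x_L\|_2^2$. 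In the delocalized regime this replaces the paper's heavy-edge pruning and its vertex-count bound $|V(H)|=O(\sqrt m)$. Your sharpness example matches Example~\ref{exam:sharpness-Bt}, using $G(n,m)$ instead of $G(n,p)$ to fix the edge count.

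However, so far you have used the hypothesis only as $\lambda(G)\ge\sqrt m$. But $S_{t-1,m}$ itself has $\lambda>\sqrt m$ and is $K_{t,t}$-free. So the whole content of the theorem lies in your localized case, and there the proposal states an aim rather than an argument.

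Concretely, two ingredients are missing.

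First, a structural step. You must show that when Perron mass $>\eta$ sits on $O_\delta(1)$ vertices, all but $o(m)$ edges join a small set $A$ to an independent set $D$. Otherwise you must count inside a dense part that carries the spectral radius, as in Lemma~\ref{lem:lemmaC-localized-nonsparse}. You also need the $A$--$D$ incidence matrix $M$ to satisfy $\|M\|=(1-o(1))\|M\|_{\mathrm F}$. This produces a set $R$ of aligned rows covering all but $o(m)$ edges, with pairwise common neighbourhoods of size $(1-o(1))m/|R|$.

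Your stated target, $t$ vertices with common neighbourhood $\Omega(m)$, is too weak for the sharp constant: with an unspecified constant it need not reach $B_t m^t$. You need $R$ to carry almost all edges, and then to count over all $t$-subsets, $\binom{|R|}{t}\binom{(1-o(1))m/|R|}{t}\ge(\frac{1}{t!\,t^t}-o(1))m^t$.

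Second, and decisively, you need a mechanism that turns the excess $\lambda-\lambda(S_{t-1,m})>0$ into the impossibility of $|R|\le t-1$. A comparison of $\lambda^2$ with $m$ or a continuity argument on the Perron mass cannot do this, because the relevant effects are of order $1/\sqrt m$ per edge. The paper proceeds as follows:
\begin{itemize}
\item it deletes the set $E_b$ of edges missing $R$;
\item it applies Theorem~\ref{thm-Ktt-free} to the $K_{t,t}$-free graph $H-E_b$;
\item it shows $\sum_{uv\in E_b}x_ux_v=o(|E_b|/\sqrt m)$ via Lemma~\ref{lem:spectral-vertex-cut}(c), using that edges leaving the bad vertex set go only to $R$;
\item it compares this with the per-edge increment $\lambda(S_{t-1,m})-\lambda(S_{t-1,m-1})\ge\frac{1}{2(t-1)(\sqrt m+t-1)}$ from Lemma~\ref{lem:Skm-increment}.
\end{itemize}
Together these give $\lambda(H)<\lambda(S_{t-1,m})$, a contradiction.
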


For counting \(C_4\), the proof of Theorem~\ref{thm:counting-C4} relies crucially on an
exact spectral identity \cite{LL2009} for the number of \(4\)-cycles:  $\texttt{\#}C_4(G)
        =
        \frac18\sum_i(\lambda_i^4+\lambda_i^2)
        -
        \frac14\sum_i d_i^2,$
where \(\lambda_i\) are the eigenvalues of \(G\) and \(d_i\) are its
degrees. However, for \(K_{t,t}\) with \(t\ge3\), no analogous identity is available for controlling injective copies directly from the spectrum or degrees of $G$.  Instead, we need to use the spectral Sidorenko inequality
\eqref{eq:Ktt}, which gives the correct homomorphism count.  The new difficulty is to control degenerate homomorphisms to convert it into a lower bound for injective copies, especially
near extremal split  graphs where the Perron eigenvector can be highly localized.

The same structural framework also applies beyond complete bipartite graphs. As the second counting result, we prove the following sharp asymptotic supersaturation for even cycles $C_{2t}$ for every $t\ge 2$. Here the sharp construction is different from that of $K_{t,t}$. 

\begin{theorem}\label{thm:sharp-C2t}
 Let $t\ge 2$ and $G$ be an $m$-edge graph. If $\lambda(G)> \lambda(S_{t-1,m})$, then
 \[
   \texttt{\#}\, C_{2t}(G) \ge
   \left( \frac{(t-1)!}{2t^t} - o(1)\right) m^t .
 \]
 The constant is best possible, as witnessed by the split graphs $S_{t,m}$.
 \end{theorem}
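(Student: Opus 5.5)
The plan is to start from the trivial spectral Sidorenko bound \eqref{eq:even-cycle}, $\homc(C_{2t},G)=\operatorname{tr}A(G)^{2t}\ge\lambda(G)^{2t}$. Then I would argue that, above the threshold $\lambda(S_{t-1,m})$, the closed $2t$-walks split into two kinds: genuine $2t$-cycles, and degenerate walks that revisit a vertex. Either most walks are genuine, or $G$ is forced to look like a split graph with at least $t$ hubs. The constant $\frac{(t-1)!}{2t^t}$ must come out of the second (localized) regime. A direct count in $S_{t,m}$ confirms this: it has $t$ hubs and about $m/t$ independent vertices, and alternating hub/leaf cycles give $\binom{m/t}{t}\cdot\frac{(t!)^2}{2t}\approx\frac{(t-1)!}{2t^t}m^t$. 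By contrast, in the delocalized regime the bound $\lambda^{2t}\gtrsim m^t$ yields about $\frac{1}{2t}m^t$ cycles, which is larger.

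\textbf{Step 1: preprocessing.} First I run the heavy-edge pruning process. I repeatedly delete edges $uv$ lying in few $C_{2t}$'s, at most $\varepsilon m^{t-1}$ of them, as long as the spectral radius stays essentially above $\sqrt m$. If the process removes many edges, I recurse or conclude directly. Otherwise every remaining edge lies in many cycles, and $\lambda^2\ge m+(t-2)\sqrt m-O(1)$, using $\lambda(S_{t-1,m})=\sqrt m+\frac{t-2}{2}+o(1)$.

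\textbf{Step 2: dichotomy on the Perron vector $x$.} Normalize $\|x\|_2=1$ and fix a small $\delta>0$.

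\emph{Delocalized case:} $\max_v x_v\le\delta$. I expand $\operatorname{tr}A^{2t}$ and bound the degenerate closed walks by walks passing twice through a vertex. Via $\sum_v d_v^2$-type incidence-matrix inequalities, together with $\max_v x_v\le\delta$ and $\lambda\le\sqrt{2m}$, I want to show these number $O(\delta)m^t$. This gives $\texttt{\#}C_{2t}\ge(\frac{1}{2t}-O(\delta))m^t$, which exceeds the target constant.

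\emph{Localized case:} some coordinates of $x$ exceed $\delta$, so there is a set $S$ of $O(\delta^{-2})$ hubs carrying a constant fraction of the mass. The eigen-equation $\lambda x_v=\sum_{u\sim v}x_u$ then forces hubs to have degree $\Omega(\sqrt m)$, and in fact $\Omega(m/|S|)$ once combined with $e(G)=m$.

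\textbf{Step 3: the localized case (main obstacle).} Here I need a stability statement: if $G$ is close to a split graph with hub set $S$ and $\lambda>\lambda(S_{t-1,m})$, then at least $t$ hubs $h_1,\dots,h_t$ have pairwise common neighbourhoods $N_{ij}$ with $\sum_i d(h_i)\ge(1-o(1))m$, up to the right balancing. The reason is that $t-1$ hubs with clique structure give at most $\lambda(S_{t-1,m})$, by the extremal result Theorem~\ref{thm-Ktt-free} and its stability. I would prove this by writing $\lambda^2\approx\sum_{h\in S}d(h)x_h^2/\ldots$ as a quadratic form on the hub–leaf incidence matrix and comparing with the split-graph optimum. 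Given such hubs, I count alternating cycles $h_1\ell_1h_2\ell_2\cdots h_t\ell_t$ with $\ell_i$ distinct leaves adjacent to consecutive hubs. This gives at least $\frac{(t!)^2}{2t}\prod(\text{sizes})/t!$ cycles. By AM--GM under the edge constraint, the product is minimized when the leaf sets are nearly equal, of size $m/t$, yielding $(\frac{(t-1)!}{2t^t}-o(1))m^t$.

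The hard part is making this rigorous when hubs only share most, not all, leaves, and when leaves attach to fewer than all $t$ hubs. I would first try to handle this by a weighted AM--GM over the hub–leaf incidence matrix, inserting the spectral excess over $\lambda(S_{t-1,m})$ as the constraint that forces a $t$-th hub to be genuinely used.

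Finally, sharpness follows from the count for $S_{t,m}$ given above, since $\lambda(S_{t,m})>\lambda(S_{t-1,m})$.
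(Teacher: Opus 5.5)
Your heuristic picture is right: the sharp constant comes from $t$ hubs sharing about $m/t$ leaves, the delocalized regime gives a larger constant, and your sharpness count for $S_{t,m}$ is correct. But the proposal does not prove the decisive steps, and its preprocessing is of the wrong kind.

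\textbf{Preprocessing and the delocalized case.} The hypothesis is only $\lambda>\sqrt m+\frac{t-2}{2}+O(m^{-1/2})$, and the excess over the threshold may be arbitrarily small. The only place where anything beyond $\lambda\ge(1-o(1))\sqrt m$ matters is in ruling out graphs whose edges are essentially covered by at most $t-1$ hubs. An example is $S_{t-1,m}$ itself, which has $\lambda\ge\sqrt m$ but no $C_{2t}$. So the subgraph $H$ you pass to must still satisfy $\lambda(H)>\lambda(S_{t-1,e(H)})$ exactly; ``essentially above $\sqrt m$'' is useless. Deleting edges that lie in few $C_{2t}$'s gives no control here, because the loss of $\lambda$ is governed by $x_ux_v$, not by cycle counts. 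For instance, every hub--leaf edge of $S_{t-1,m}$ lies in no $C_{2t}$ and has $x_ux_v\approx 1/(2\sqrt m)$. Its removal leaves a $K_{t,t}$-free graph, so $\lambda$ drops by at least the full increment $\lambda(S_{t-1,m})-\lambda(S_{t-1,m-1})$.

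The paper instead deletes edges with $x_ux_v<\eta/\sqrt{m_i}$, where $\eta=1/(16t)$. Each deletion costs at most $2\eta/\sqrt{m_i}$, which is less than the increment in Lemma~\ref{lem:Skm-increment}, so the gap over the split threshold only grows. If a positive fraction of edges is removed, one gets $\lambda(H)\ge(1+\delta)\sqrt{e(H)}$, which forces delocalization by Lemma~\ref{lem:max}; this replaces your unspecified ``recurse or conclude''.

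The heavy-edge condition is also what controls degenerate walks. It gives $d_v\le\sqrt2\,x_v^2m/\eta$, which would make your $\sum_v d_v^2$ bound work. At the paper's scale $\|\bm{x}\|_\infty=O(m^{-1/4})$ it gives $|V(H)|=O(\sqrt m)$. By contrast, $\max_v x_v\le\delta$ alone controls neither. Take a clique with slightly more than $m/2$ edges plus a disjoint star on the remaining edges: it satisfies the hypothesis with $\max_v x_v\approx m^{-1/4}$, yet the star contributes $\Theta(m^t)$ degenerate closed walks. Your cycle-count pruning does not obviously supply a degree bound.

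The delocalized constant is also $\frac1{4t}$, not $\frac1{2t}$. Only $\operatorname{tr}A^{2t}\ge\lambda^{2t}$ is available and $|\Aut(C_{2t})|=4t$; random graphs on $(2+o(1))\sqrt m$ vertices have $(\frac1{4t}+o(1))m^t$ copies. This still suffices, since $\frac1{4t}\ge\frac{(t-1)!}{2t^t}$.

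\textbf{The localized case (main gap).} Step~3 states the needed stability result instead of proving it, and the form you give is not correct as stated.
\begin{enumerate}
\item Requiring some $t$ hubs with $\sum_i d(h_i)\ge(1-o(1))m$ fails for $S_{r,m}$ with $r>t$, where any $t$ hubs meet only about $tm/r$ edges.
\item Balance cannot come from AM--GM: for fixed $\sum_i|N_i|$ the product $\prod_i|N_i|$ is \emph{maximized}, not minimized, at equal sizes.
\item Unbalanced configurations, say $t-1$ hubs sharing almost all leaves plus a $t$-th hub of small degree, are exactly what has to be excluded. An arbitrarily small spectral excess cannot simply be ``inserted as a constraint''.
\end{enumerate}

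The missing idea is the near-rank-one argument of Theorem~\ref{thm:sparseC-rowcover}. Suppose all but $o(m)$ edges run between a set $A$ of $o(\sqrt m)$ hubs and an independent set $D$, and let $M$ be the $A$--$D$ incidence matrix. Then $\lambda(H)\le\|M\|+o(\sqrt m)$, hence $\|M\|^2=(1-o(1))\|M\|_{\mathrm{F}}^2$. Therefore all but $o(m)$ edges lie on rows $\theta$-aligned with the top singular vector, and aligned $0/1$ rows automatically have comparable degrees and nearly identical neighbourhoods. This yields a dichotomy with no intermediate case.
\begin{itemize}
\item If there are $r\ge t$ aligned rows, write $e=e_H(A,D)=(1-o(1))m$. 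Every $t$-subset of these rows has $L=(1-o(1))e/r$ common neighbours, so $\texttt{\#}C_{2t}(H)\ge\binom rt\binom Lt\frac{t!(t-1)!}{2}\ge\bigl(\frac{(t-1)!}{2t^t}-o(1)\bigr)m^t$. This uses $\binom rt/r^t\ge t^{-t}$: the minimum over $r$ is at $r=t$, not an AM--GM balance.
\item If $r\le t-1$, delete the set $E_b$ of the $o(m)$ edges avoiding $R$. Their Perron weight is $o(|E_b|/\sqrt m)$ by Lemma~\ref{lem:spectral-vertex-cut}(c). Then apply Theorem~\ref{thm-Ktt-free} to the remainder and compare with Lemma~\ref{lem:Skm-increment} to get a contradiction.
\end{itemize}
That last comparison is exactly where the preserved gap $\lambda(H)>\lambda(S_{t-1,e(H)})$ is indispensable. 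The paper reaches this setting through the level-set partition of Theorem~\ref{thm:localized-structure}. It uses a separate spectral count inside the core $H[C]$ when the core keeps a positive fraction of the edges.
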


Notice that the same spectral threshold leads to two different sharp extremal
mechanisms: balanced random graphs are sharp for \(K_{t,t}\), while split
graphs are sharp for \(C_{2t}\).

\paragraph{Proof overview.}

The proof of Theorem~\ref{thm:equivalence} starts from the observation
that ordinary Sidorenko and spectral Sidorenko coincide for regular host
graphs. The key is Theorem~\ref{thm:regular-subgraphs}, which provides a
spectral reduction to regular host graphs.  The construction for finding regular subgraph uses the Perron eigenvector to build a probability
distribution on ordered edges, i.e., 
        $p_{ij}:=\frac{a_{ij}x_ix_j}{\lambda(G)}.$
This distribution has both marginals equal to \(\pi_i=x_i^2\), which is a distribution on vertices. In the $k$-th tensor power \(G^{\otimes k}\), we take vertices with empirical distribution close to \(\pi\), and edges whose coordinate-wise empirical edge distribution is close to \(p\). The
matching marginal condition makes the resulting subgraph regular and the entropy identity \(H(p)-H(\pi)=\log\lambda(G)\) controls the degree. Applying Sidorenko's inequality to this regular subgraph, combining with multiplicativity and monotonicity, proves the spectral form.

We next outline the proof of Theorem~\ref{thm:sharp-Ktt}.  The spectral
Sidorenko inequality gives the correct homomorphism lower bound for
\(K_{t,t}\).  After subtracting non-injective homomorphisms, it gives the
sharp copy count whenever the relevant graph has \(O(\sqrt m)\) vertices; see Lemma \ref{lem:spectral-count-Ktt}. 
The main difficulty is that graphs above the split spectral threshold may
have localized Perron eigenvectors and may be close to split
\(K_{t,t}\)-free constructions.

We first prune the graph by deleting edges with small Perron weight 
product.  Such edges contribute little to the Rayleigh quotient, and a
discrete increment estimate for \(\lambda(S_{t-1,m})\) shows that the
strict gap above the split threshold is preserved in the resulting subgraph \(H\) and every remaining edge is heavy with respect to the Perron vector; see Lemma \ref{lem:lemmaA-pruning}. The pruned graph is then analyzed according to the parameter 
\[
        g:=\|\bm{x}\|_\infty e(H)^{1/4}.
\]
If \(g=O(1)\), then the heavy-edge condition forces the non-isolated part of
\(H\) to have \(O(\sqrt{e(H)})\) vertices. So the non-injective error $O(n^{2t-1})$ reduces to  $o(m^t)$, which is
negligible. Using spectral Sidorenko inequality  gives the sharp count; see Lemma \ref{lem:lemmaB-delocalized}.

The localized case \(g\to\infty\) requires a new structural theorem; see Theorem \ref{thm:localized-structure}. More precisely, we use
Perron-vector level sets to obtain a partition
        $V(H)=A\sqcup C\sqcup D,$
where \(D\) is independent, there are no \(C\)--\(D\) edges, every edge
is either inside \(C\) or incident to \(A\), and
$e_H(A,C)=o(e(H))$, $|A|=o(\sqrt{e(H)})$ and 
$ |C|\le e(H)^{1/2+o(1)}$. 
If the core \(H[C]\) contains a positive fraction of the edges, then a spectral cut estimate shows that
the core retains enough spectral radius, and spectral Sidorenko applies
inside \(H[C]\); see Lemma \ref{lem:lemmaC-localized-nonsparse}.  
If the core $H[C]$ is sparse, i.e., $e(H[C])=o(e(H))$, 
then almost all edges of $H$ lie between
\(A\) and \(D\).  The \(A\)--\(D\) incidence matrix is nearly rank one,
forcing either many aligned rows with a large common neighborhood, which
already produce many copies of \(K_{t,t}\), or else almost all edges are
covered by at most \(t-1\) vertices of \(A\); see Theorem \ref{thm:sparseC-rowcover}. 
The latter alternative is
close to a \(K_{t,t}\)-free split graph.  We rule it out by deleting the
exceptional edges, applying the edge-spectral Tur\'an theorem for
\(K_{t,t}\), and using the split-graph increment estimate to contradict
the preserved spectral gap; see the proof of Lemma \ref{lem:lemmaD-localized-sparse}. 

The proof of Theorem~\ref{thm:sharp-C2t} uses the same structural
framework, but the counting input is the simpler inequality~\eqref{eq:even-cycle}.
This leads to the split-graph constant \((t-1)!/(2t^t)\), whereas
\(K_{t,t}\) is governed by the balanced random-graph construction. In conclusion, the above discussions of $K_{t,t}$ and $C_{2t}$ illustrate that 
our method provides a unified framework for  establishing edge-spectral supersaturation
results for other bipartite graphs $H$ with the spectral Sidorenko property, including the bipartite graphs having a vertex complete to the other part, hypercubes and the $1$-subdivisions of cliques, upon a comparison of the number of  copies of $H$ in random graphs and split graphs. 

\paragraph{Organization.}

The rest of the paper is organized as follows.  In
Section~\ref{sec:proof-spectral-Sido}, we prove
Theorem~\ref{thm:equivalence} and develop the operator-norm certificate.
In Section~\ref{sec:preliminaries}, we collect preliminary estimates for
the \(K_{t,t}\) supersaturation theorem.  In
Section~\ref{sec-four-key-lemmas}, we prove Theorem~\ref{thm:sharp-Ktt}.
In Section~\ref{sec:even-cycles}, we prove Theorem~\ref{thm:sharp-C2t}.
Finally, in Section~\ref{sec:Concluding}, we discuss several concluding
questions.

\section{Spectral Sidorenko inequality}\label{sec:proof-spectral-Sido}

\subsection{Spectral Sidorenko via an operator-norm certificate}

In this subsection we introduce an analytic sufficient condition for
proving spectral Sidorenko inequalities.  The condition is formulated in
terms of operator norms and Riesz--Thorin interpolation.
We verify it for \(K_{t,t}\), which is the main counting input in the proof of Theorem~\ref{thm:sharp-Ktt}.

Let \(\bm{x}\in\mathbb R^n\).  For \(1\le p<\infty\), write
        $\|\bm{x}\|_p
        :=
        \left(\sum_{i=1}^n |x_i|^p\right)^{1/p}.$
We also write
        $\|\bm{x}\|_\infty:=\max_{1\le i\le n}|x_i|.$ For a real
\(n\)-by-\(n\) matrix \(A\), define the \((p,r)\)-operator norm by $ \|A\|_{p\to r}
        :=
        \sup_{\bm{x}\neq 0}
        \frac{\|A\bm{x}\|_r}{\|\bm{x}\|_p},
$, $1\le p,r\le\infty$.
We shall use the following standard form of the Riesz--Thorin
interpolation theorem; see, for example,
Cerda~\cite[page~61]{Cerda2010linear} and
Grafakos~\cite[page~37]{Grafakos2014ClassicalFourierAnalysis}.

\begin{theorem}[Riesz--Thorin interpolation]
\label{thm:riesz-thorin}
Let \(T\) be a linear operator between finite-dimensional \(\ell^p\)
spaces.  Suppose that
       $ \|T\|_{p_0\to q_0}\le M_0$ and 
       $ \|T\|_{p_1\to q_1}\le M_1,$
where \(p_0,q_0,p_1,q_1\in[1,\infty]\), with the convention
\(1/\infty=0\).  If \(\theta\in[0,1]\) and
        $\frac1{p_\theta}
        =
        \frac{1-\theta}{p_0}
        +
        \frac{\theta}{p_1},$
$        \frac1{q_\theta}
        =
        \frac{1-\theta}{q_0}
        +
        \frac{\theta}{q_1},
$
then
\[
        \|T\|_{p_\theta\to q_\theta}
        \le
        M_0^{1-\theta}M_1^\theta .
\]
\end{theorem}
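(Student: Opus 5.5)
Since this is the classical Riesz--Thorin theorem, I would follow Thorin's complex-variable proof via the Hadamard three lines lemma. The plan works over complex scalars, so that \(\|T\|_{p\to q}\) is computed over \(\mathbb C^n\). For the application to adjacency matrices this costs nothing. A nonnegative matrix satisfies \(|A\bm{x}|\le A|\bm{x}|\) entrywise, so its real and complex operator norms coincide.

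\textbf{Setup.} Fix \(\theta\in(0,1)\); the endpoint cases are the hypotheses themselves. By duality it suffices to show
\[
|\langle T\bm{x},\bm{y}\rangle|\le M_0^{1-\theta}M_1^\theta
\]
for all \(\bm{x}\) with \(\|\bm{x}\|_{p_\theta}=1\) and all \(\bm{y}\) with \(\|\bm{y}\|_{q_\theta'}=1\), where \(q'\) is the conjugate exponent. Assume first \(p_\theta<\infty\) and \(q_\theta'<\infty\). Write \(x_j=|x_j|e^{i\alpha_j}\) and \(y_k=|y_k|e^{i\beta_k}\). For \(z\) in the closed strip \(0\le \operatorname{Re} z\le 1\), define
\[
x_j(z)=|x_j|^{p_\theta\left(\frac{1-z}{p_0}+\frac{z}{p_1}\right)}e^{i\alpha_j},\qquad
y_k(z)=|y_k|^{q_\theta'\left(\frac{1-z}{q_0'}+\frac{z}{q_1'}\right)}e^{i\beta_k}.
\]
Terms with \(x_j=0\) or \(y_k=0\) are set to zero. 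Put \(F(z)=\langle T\bm{x}(z),\bm{y}(z)\rangle\). Because the dimension is finite, \(F\) is a finite sum of exponentials \(c\,e^{\gamma z}\) with real \(\gamma\). Hence \(F\) is entire and bounded on the strip. By construction \(\bm{x}(\theta)=\bm{x}\) and \(\bm{y}(\theta)=\bm{y}\), so \(F(\theta)=\langle T\bm{x},\bm{y}\rangle\).

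\textbf{Boundary estimates.} On \(\operatorname{Re} z=0\), the moduli satisfy \(|x_j(z)|=|x_j|^{p_\theta/p_0}\). Hence \(\|\bm{x}(z)\|_{p_0}^{p_0}=\|\bm{x}\|_{p_\theta}^{p_\theta}=1\), and similarly \(\|\bm{y}(z)\|_{q_0'}=1\). H\"older's inequality then gives
\[
|F(z)|\le \|T\bm{x}(z)\|_{q_0}\,\|\bm{y}(z)\|_{q_0'}\le M_0 .
\]
The same argument on \(\operatorname{Re} z=1\) gives \(|F(z)|\le M_1\). The three lines lemma yields \(|F(\theta)|\le M_0^{1-\theta}M_1^\theta\). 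Taking the supremum over \(\bm{y}\) and then over \(\bm{x}\) gives the claim.

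\textbf{Degenerate exponents.} These are the main technical obstacle. If \(p_\theta=\infty\), then \(p_0=p_1=\infty\). In that case I take \(\bm{x}(z)\equiv\bm{x}\), which has \(\ell^\infty\) norm \(1\) on both lines. If \(q_\theta'=\infty\), i.e.\ \(q_\theta=1\), then \(q_0=q_1=1\), and \(\bm{y}(z)\equiv\bm{y}\) works in the same way. If some \(p_i=\infty\) with \(p_\theta<\infty\), the corresponding exponent \(1/p_i\) is \(0\) and the formula above still applies. Zero coordinates are harmless because the vectors are finite-dimensional, and the three lines lemma only needs boundedness, which is automatic here.
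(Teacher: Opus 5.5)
Your proof is correct: it is the standard Thorin three-lines argument (with the bilinear pairing $\sum_k (T\bm{x})_k y_k$, which is what makes $F$ holomorphic), i.e.\ the standard proof found in the Cerd\`a and Grafakos references that the paper cites instead of proving the theorem. Your explicit use of complex scalars, together with the observation that $|A\bm{x}|\le A|\bm{x}|$ makes real and complex norms agree for nonnegative $A$, is a genuine improvement on the paper. The paper's statement is silent on the scalar field, and the real-scalar version fails in general: for the matrix $T$ with rows $(1,1)$ and $(1,-1)$, the real norms $\|T\|_{\infty\to1}=2$ and $\|T\|_{2\to2}=\sqrt2$ interpolate at $\theta=\tfrac12$ to $2^{3/4}\approx1.68$, yet the real $\|T\|_{4\to4/3}$ exceeds $1.72$. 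Your remark is therefore what legitimizes the paper's applications, including the $\infty\to1$ endpoint in Lemma~\ref{lem:operator-certificate}, all of which concern nonnegative adjacency matrices.
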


We now define the operator-norm certificate.

\begin{definition}[Operator-norm certificate]
Let \(H\) be a bipartite graph with \(v\) vertices and \(e\) edges, where
\(v\le e\).  Set
        $s_H:=\frac{2e}{v}$ and 
$        s_H':=\frac{2e}{2e-v}.
$
Thus \(1/s_H+1/s_H'=1\).  We say that \(H\) satisfies the
\emph{operator-norm Sidorenko certificate} if, for every graph \(G\) with
adjacency matrix \(A=A(G)\),
\[
        \homc(H,G)\ge \|A\|_{s_H'\to s_H}^{\,e}.
\]
\end{definition}

The certificate is designed so that interpolation converts it into the
spectral Sidorenko inequality.

\begin{lemma}[Operator-norm certificate implies spectral Sidorenko]
\label{lem:operator-certificate}
Let \(H\) be a bipartite graph with \(v\) vertices and \(e\) edges, where
\(v\le e\).  If \(H\) satisfies the operator-norm Sidorenko certificate,
then, for every graph \(G\) with \(M(G)=2e(G)>0\),
$        \homc(H,G)
        \ge
        \lambda(G)^{2e-v}M(G)^{v-e}.$
\end{lemma}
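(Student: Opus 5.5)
We need to show $\|A\|_{s'\to s}^e \ge \lambda^{2e-v} M^{v-e}$. The plan is to interpolate the other way: bound $\lambda = \|A\|_{2\to 2}$ from above by Riesz--Thorin between the $(s'\to s)$ norm and a norm that can be computed exactly in terms of $M$. The natural candidate is $\|A\|_{1\to\infty} = \max_{ij} |a_{ij}| = 1$. That alone gives a bound involving only $1$, however, so it will not produce the factor $M$. A better partner is $\|A\|_{\infty\to 1}$, which satisfies $\|A\|_{\infty\to1} \le \sum_{ij}|a_{ij}| = M(G)$.

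First I locate $(1/2,1/2)$ on the segment joining $(1/p,1/q) = (1/s',1/s)$ and $(0,1)$, the latter corresponding to $\infty\to 1$. Write $1/s' = 1-v/(2e)$ and $1/s = v/(2e)$. We need $\theta\in[0,1]$ with
\[
(1-\theta)\Bigl(1-\frac{v}{2e}\Bigr) = \frac12, \qquad (1-\theta)\frac{v}{2e} + \theta = \frac12 .
\]
The first equation gives $1-\theta = \frac{e}{2e-v}$. Checking the second: $\frac{e}{2e-v}\cdot\frac{v}{2e} + \frac{e-v}{2e-v} = \frac{v/2 + e - v}{2e-v} = \frac12$, so it is consistent. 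We also need $\theta = \frac{e-v}{2e-v}\in[0,1]$, which uses exactly the hypothesis $v\le e$.

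Riesz--Thorin (Theorem~\ref{thm:riesz-thorin}) then gives
\[
\lambda(G) = \|A\|_{2\to2} \le \|A\|_{s'\to s}^{\,e/(2e-v)}\, M(G)^{(e-v)/(2e-v)} .
\]
Raising to the power $2e-v>0$ yields $\lambda^{2e-v} \le \|A\|_{s'\to s}^{e}\, M^{e-v}$. Hence
\[
\homc(H,G) \ge \|A\|_{s'\to s}^e \ge \lambda^{2e-v} M^{v-e},
\]
where the division by $M^{e-v}$ uses $M>0$.

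The only subtle points are the following.
\begin{itemize}
\item The identity $\|A\|_{2\to 2} = \lambda(G)$ holds because $A$ is symmetric and nonnegative, so its operator norm equals its spectral radius.
\item The bound $\|A\|_{\infty\to1}\le M$ follows from $\|A\bm x\|_1 \le \sum_{ij}|a_{ij}|\,\|\bm x\|_\infty$.
\item When $v=e$ we have $\theta=0$, and the argument degenerates correctly to $\lambda\le\|A\|_{2\to2}$.
\end{itemize}

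The main obstacle is only choosing the right endpoint and checking that $(1/2,1/2)$ is collinear with the two endpoints. Once the endpoint $\infty\to 1$ is chosen, the rest is the exponent bookkeeping above.
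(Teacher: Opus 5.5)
Your proposal is correct and is essentially the paper's proof. The paper also applies Riesz--Thorin between $\|A\|_{s_H'\to s_H}$ and $\|A\|_{\infty\to 1}\le M(G)$, with weight $\alpha=e/(2e-v)$ on the first endpoint (your $1-\theta$), to get $\lambda\le \|A\|_{s_H'\to s_H}^{\alpha}M^{1-\alpha}$, and then rearranges the exponents exactly as you do.
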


\begin{proof}
Let \(A=A(G)\), \(M=M(G)\), and
\(\lambda=\lambda(G)=\|A\|_{2\to2}\).  Put
        $\alpha:=\frac{e}{2e-v}.$
Since \(v\le e\), we have \(0<\alpha\le1\).  Moreover,
        $\frac12
        =
        \alpha\frac1{s_H'}
        +
        (1-\alpha)\frac1\infty$ and
        $\frac12
        =
        \alpha\frac1{s_H}
        +
        (1-\alpha)\cdot 1.$
Applying Riesz--Thorin interpolation between
\(\|A\|_{s_H'\to s_H}\) and \(\|A\|_{\infty\to1}\), and using
       $ \|A\|_{\infty\to1}\le M,$
we obtain
\[
        \lambda
        =
        \|A\|_{2\to2}
        \le
        \|A\|_{s_H'\to s_H}^{\alpha}M^{1-\alpha}.
\]
Hence
     $   \|A\|_{s_H'\to s_H}
        \ge
        \lambda^{1/\alpha}M^{-(1-\alpha)/\alpha}.$
Raising both sides to the \(e\)-th power gives
\[
        \|A\|_{s_H'\to s_H}^{e}
        \ge
        \lambda^{e/\alpha}M^{-e(1-\alpha)/\alpha}
        =
        \lambda^{2e-v}M^{v-e}.
\]
The operator-norm certificate now gives the desired inequality.
\end{proof}

More generally, if \(W=(w_{ij})\) is a nonnegative symmetric matrix indexed by a finite set, we write
\[
        \homc(H,W)
        :=
        \sum_{\phi:V(H)\to [n]}
        \prod_{uv\in E(H)} w_{\phi(u)\phi(v)}
\]
for the corresponding weighted homomorphism count.

We next verify the certificate for \(K_{t,t}\), in which case 
        $s_{K_{t,t}}=t$ and $
        s_{K_{t,t}}'=\frac{t}{t-1}.$
In fact, the proof below works verbatim for every nonnegative
symmetric matrix \(A\).

\begin{lemma}[Operator-norm certificate for \(K_{t,t}\)]
\label{lem:operator-certificate-Ktt}
For every \(t\ge2\) and every graph \(G\) with adjacency matrix \(A\),
\[
        \homc(K_{t,t},G)\ge \|A\|_{q\to t}^{t^2},
        \qquad
        q:=\frac{t}{t-1}.
\]
Consequently, if \(M(G)>0\), then
$\homc(K_{t,t},G)
        \ge
        \frac{\lambda(G)^{2t(t-1)}}{M(G)^{t(t-2)}}.
$
\end{lemma}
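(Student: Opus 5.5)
The plan is to write $\hom(K_{t,t},G)$ as an $\ell^t$-norm and then use duality to compare it with the operator norm. Label the two sides of $K_{t,t}$ as $u_1,\dots,u_t$ and $w_1,\dots,w_t$. Summing over the images of the $w_j$ first gives
\[
\hom(K_{t,t},G)=\sum_{i_1,\dots,i_t}\Big(\sum_j a_{i_1j}\cdots a_{i_tj}\Big)^t .
\]
Define the nonnegative tensor $T(i_1,\dots,i_t):=\sum_j \prod_{s=1}^t a_{i_sj}$. Then $\hom(K_{t,t},G)=\|T\|_t^t$, where $T$ is viewed as a vector in $\mathbb R^{[n]^t}$.

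Fix $\bm x\ge 0$ with $\|\bm x\|_q=1$; we may take $\bm x\ge0$ since $A\ge0$. Consider the tensor product $\bm x^{\otimes t}$, which satisfies $\|\bm x^{\otimes t}\|_q=\|\bm x\|_q^t=1$. By Hölder's inequality with exponents $t$ and $q=t/(t-1)$,
\[
\langle T,\bm x^{\otimes t}\rangle\le \|T\|_t .
\]
Computing the left side directly,
\[
\langle T,\bm x^{\otimes t}\rangle=\sum_j\Big(\sum_i a_{ij}x_i\Big)^t=\|A\bm x\|_t^t .
\]
Hence $\|A\bm x\|_t^t\le \hom(K_{t,t},G)^{1/t}$. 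Taking the supremum over $\bm x$ gives $\|A\|_{q\to t}^{t}\le \hom(K_{t,t},G)^{1/t}$, and raising to the power $t$ gives $\hom(K_{t,t},G)\ge\|A\|_{q\to t}^{t^2}$. Nothing in this argument uses that $A$ is $0/1$, so it holds for any nonnegative symmetric matrix. For general real $\bm x$ one has $|A\bm x|\le A|\bm x|$ entrywise, so the reduction to $\bm x\ge0$ is harmless.

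The consequence follows from Lemma~\ref{lem:operator-certificate}. For $H=K_{t,t}$ we have $v=2t$, $e=t^2$, and $v\le e$ holds for $t\ge2$. The certificate just proved, with $s_H=t$ and $s_H'=q$, then yields
\[
\hom(K_{t,t},G)\ge\lambda^{2t^2-2t}M^{2t-t^2}=\frac{\lambda^{2t(t-1)}}{M^{t(t-2)}} .
\]

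The only delicate point is the duality step. We need the correct pairing of $T$ with $\bm x^{\otimes t}$, together with multiplicativity of the $\ell^q$ norm under tensor products. Both are routine, so I expect no real obstacle.
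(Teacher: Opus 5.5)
Your proof is correct and follows essentially the same route as the paper. The paper also writes $\|A\bm{x}\|_t^t=\sum_{J\in[n]^t}\bigl(\prod_b x_{j_b}\bigr)S_J$, where $S_J$ is the common-neighbourhood count of a $t$-tuple, and applies H\"older on $[n]^t$ with exponents $t$ and $q$ to get $\|A\bm{x}\|_t^t\le\homc(K_{t,t},G)^{1/t}\|\bm{x}\|_q^t$; it then obtains the spectral consequence from the interpolation lemma with $s_H=t$ and $s_H'=q$. Your pairing $\langle T,\bm{x}^{\otimes t}\rangle\le\|T\|_t\|\bm{x}^{\otimes t}\|_q$ is the same computation written in tensor notation.
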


\begin{proof}
Let \(A=(a_{ij})\) be the adjacency matrix of \(G\).  Since \(A\) has
nonnegative entries, the norm \(\|A\|_{q\to t}\) may be computed using
nonnegative vectors, by replacing \(\bm{x}\) with \(|\bm{x}|\).
Let \(\bm{x}\ge0\).  For
\(J=(j_1,\ldots,j_t)\in[n]^t\), write
\[
        S_J
        :=
        \sum_{i=1}^n\prod_{b=1}^t a_{ij_b}.
\]
Then
$        \homc(K_{t,t},G)
        =
        \sum_{J\in[n]^t}S_J^t.$
Expanding \(\|A\bm{x}\|_t^t\) and applying H\"older's inequality on
\([n]^t\), we obtain
\[
\begin{aligned}
        \|A\bm{x}\|_t^t
        &=
        \sum_{i=1}^n
        \left(\sum_{j=1}^n a_{ij}x_j\right)^t                         \\
        &=
        \sum_{j_1,\ldots,j_t=1}^n
        \left(\prod_{b=1}^t x_{j_b}\right)
        \left(\sum_{i=1}^n\prod_{b=1}^t a_{ij_b}\right)                \\
        &\le
        \left(\sum_{J\in[n]^t}S_J^t\right)^{1/t}
        \left(
        \sum_{j_1,\ldots,j_t=1}^n
        \prod_{b=1}^t x_{j_b}^q
        \right)^{1/q}                                                  \\
        &=
        \homc(K_{t,t},G)^{1/t}\|\bm{x}\|_q^t .
\end{aligned}
\]
Taking \(t\)-th roots gives
$ \|A\bm{x}\|_t
        \le
        \homc(K_{t,t},G)^{1/t^2}\|\bm{x}\|_q .
$
Taking the supremum over all nonzero \(\bm{x}\ge0\), and using again
that \(A\) is nonnegative, yields the desired inequality
$        \|A\|_{q\to t}
        \le
        \homc(K_{t,t},G)^{1/t^2}.
$
\end{proof}

\begin{remark}[Other examples and scope of the certificate]
Even cycles also satisfy the certificate directly.  For \(H=C_{2t}\),
we have \(v=e=2t\), so \(s_H=s_H'=2\).  The certificate becomes exactly~\eqref{eq:even-cycle}: 
$        \homc(C_{2t},G)
        =
        \operatorname{tr}A(G)^{2t}
        \ge
        \lambda(G)^{2t}
        =
        \|A(G)\|_{2\to2}^{2t}.
$ The operator-norm certificate is a sufficient condition, not an
equivalent reformulation of Sidorenko's conjecture.  Its usefulness is
that it produces the exact type of operator-norm lower bound needed in
spectral counting problems.
\end{remark}

\begin{proposition}[Subdivision closure]
\label{prop:subdivision-closure-certificate}
Let \(H\) be a bipartite graph with \(v\) vertices and \(e\) edges, where
\(v\le e\).  Set $       s:=\frac{2e}{v}$ and 
$        s':=\frac{2e}{2e-v}.
$
Assume that \(H\) satisfies the matrix version of the operator-norm
certificate: for every nonnegative symmetric matrix \(W\),
\[
        \homc(H,W)\ge \|W\|_{s'\to s}^{\,e}.
\]
Let \(\operatorname{sub}(H)\) be the graph obtained by subdividing every
edge of \(H\) once.  Then \(\operatorname{sub}(H)\) satisfies the
operator-norm certificate.  More precisely, for every graph \(G\) with
adjacency matrix \(A=A(G)\),
\[
        \homc(\operatorname{sub}(H),G)
        \ge
        \|A\|_{\sigma'\to\sigma}^{\,2e},
\]
where        $\sigma=\frac{4e}{v+e}$ and 
$        \sigma'=\frac{4e}{3e-v}.
$
\end{proposition}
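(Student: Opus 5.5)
The plan is to express the homomorphism count of \(\operatorname{sub}(H)\) as a weighted homomorphism count of \(H\) into the matrix \(W:=A^2\). Then we apply the assumed matrix certificate to \(W\) and relate \(\|A^2\|_{s'\to s}\) to \(\|A\|_{\sigma'\to\sigma}\) by a \(TT^*\)-type argument combined with Riesz--Thorin (Theorem~\ref{thm:riesz-thorin}).

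\emph{Step 1 (reduction to \(H\)).} In a homomorphism \(\operatorname{sub}(H)\to G\), each subdivision vertex on an edge \(uv\) of \(H\) is mapped to a common neighbour of \(\phi(u)\) and \(\phi(v\)). Summing over the images of the \(e\) subdivision vertices first gives
\[
\homc(\operatorname{sub}(H),G)=\sum_{\phi:V(H)\to[n]}\prod_{uv\in E(H)}(A^2)_{\phi(u)\phi(v)}=\homc(H,A^2).
\]
Since \(A^2\) is nonnegative and symmetric, the hypothesis yields \(\homc(\operatorname{sub}(H),G)\ge\|A^2\|_{s'\to s}^{\,e}\). Note also that \(\operatorname{sub}(H)\) has \(v+e\le 2e\) vertices and \(2e\) edges. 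Hence its parameters are \(s_{\operatorname{sub}(H)}=4e/(v+e)=\sigma\) and \(s'_{\operatorname{sub}(H)}=4e/(3e-v)=\sigma'\), so the displayed inequality is exactly the certificate for \(\operatorname{sub}(H)\). It therefore remains to show \(\|A^2\|_{s'\to s}\ge\|A\|_{\sigma'\to\sigma}^2\).

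\emph{Step 2 (\(TT^*\) bound).} For any \(\bm{x}\), symmetry of \(A\) and H\"older's inequality with \(1/s+1/s'=1\) give
\[
\|A\bm{x}\|_2^2=\langle A^2\bm{x},\bm{x}\rangle\le\|A^2\bm{x}\|_s\|\bm{x}\|_{s'}\le\|A^2\|_{s'\to s}\|\bm{x}\|_{s'}^2 .
\]
Hence \(\|A\|_{s'\to2}^2\le\|A^2\|_{s'\to s}\). By duality, \(\|A\|_{2\to s}=\|A^{\mathsf T}\|_{s'\to 2}=\|A\|_{s'\to2}\), because \(A\) is symmetric and \(s,s'\) are conjugate exponents.

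\emph{Step 3 (interpolation).} Apply Theorem~\ref{thm:riesz-thorin} with \(\theta=1/2\) between the bounds for \(A:\ell^{s'}\to\ell^2\) and \(A:\ell^{2}\to\ell^{s}\). The interpolated exponents are
\[
\frac1p=\frac12\Big(\frac{2e-v}{2e}+\frac12\Big)=\frac{3e-v}{4e}=\frac1{\sigma'},\qquad \frac1q=\frac12\Big(\frac12+\frac{v}{2e}\Big)=\frac{e+v}{4e}=\frac1\sigma .
\]
Therefore \(\|A\|_{\sigma'\to\sigma}\le\|A\|_{s'\to2}^{1/2}\|A\|_{2\to s}^{1/2}=\|A\|_{s'\to2}\le\|A^2\|_{s'\to s}^{1/2}\). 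Raising to the power \(2e\) and combining with Step~1 gives the claim.

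The only genuinely non-formal point is Step 2 together with the choice of \(\theta=1/2\) in Step 3. One has to notice that the exponents \(\sigma',\sigma\) are exactly the midpoints between \((s',2)\) and \((2,s)\), so that the \(TT^*\) estimate, duality, and interpolation mesh. Everything else is bookkeeping.
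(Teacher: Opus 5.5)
Your proof is correct and matches the paper's argument step for step. Both use the identity $\homc(\operatorname{sub}(H),G)=\homc(H,A^2)$ and the bound $\|A\|_{s'\to 2}^2\le\|A^2\|_{s'\to s}$ from $\|A\bm{x}\|_2^2=\bm{x}^{\mathsf T}A^2\bm{x}$, then duality with symmetry for $\|A\|_{2\to s}$, and finally Riesz--Thorin at $\theta=1/2$, with the same exponent bookkeeping, including your check that $v+e\le 2e$.
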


\begin{proof}
Since each subdivided edge corresponds to a length-two walk in \(G\),
\[
        \homc(\operatorname{sub}(H),G)=\homc(H,A^2).
\]
By the matrix certificate for \(H\),
$ \homc(\operatorname{sub}(H),G)
        \ge
        \|A^2\|_{s'\to s}^{\,e}.$
Let \(N:=\|A^2\|_{s'\to s}\).  For every vector \(\bm{x}\),
\[
        \|A\bm{x}\|_2^2
        =
        \bm{x}^{\mathsf T}A^2\bm{x}
        \le
        \|A^2\bm{x}\|_s\|\bm{x}\|_{s'}
        \le
        N\|\bm{x}\|_{s'}^2.
\]
Thus   $ \|A\|_{s'\to2}\le N^{1/2}.$
By duality and symmetry of \(A\),
$        \|A\|_{2\to s}\le N^{1/2}.
$
Interpolating these two estimates with parameter \(1/2\), we get $\|A\|_{\sigma'\to\sigma}\le N^{1/2},$
since
$\frac1{\sigma'}
        =
        \frac12\cdot\frac1{s'}
        +
        \frac12\cdot\frac12
$ and $\frac1{\sigma}
        =
        \frac12\cdot\frac12
        +
        \frac12\cdot\frac1s.$
Therefore
\[
        \|A^2\|_{s'\to s}\ge \|A\|_{\sigma'\to\sigma}^2.
\]
Combining the inequalities gives
\[
        \homc(\operatorname{sub}(H),G)
        \ge
        \|A\|_{\sigma'\to\sigma}^{2e}.
\]
Since \(\operatorname{sub}(H)\) has \(v+e\) vertices and \(2e\) edges, its certificate exponents are precisely
        $\frac{2\cdot 2e}{v+e}=\sigma$ and 
$        \frac{2\cdot 2e}{2\cdot 2e-(v+e)}=\sigma'.
$
Thus this is exactly the required certificate.
\end{proof}

\begin{corollary}
The \(1\)-subdivision of \(K_{a,b}\) satisfies the operator-norm
Sidorenko certificate for all \(a,b\ge2\).  In particular, for every \(t\ge2\),
\[
        \homc(\operatorname{sub}(K_{t,t}),G)
        \ge
        \|A(G)\|_{\frac{4t}{3t-2}\to \frac{4t}{t+2}}^{2t^2}.
\]
\end{corollary}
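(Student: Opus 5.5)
The plan is to obtain the corollary directly from Proposition~\ref{prop:subdivision-closure-certificate}, applied with \(H=K_{a,b}\). Its hypothesis is the \emph{matrix} version of the operator-norm certificate for \(K_{a,b}\), so the main work is to verify that for every nonnegative symmetric matrix \(W\),
\[
\homc(K_{a,b},W)\ge \|W\|_{s'\to s}^{ab},\qquad s=\frac{2ab}{a+b},\quad s'=\frac{2ab}{ab-a-b}.
\]
The standing assumption \(v\le e\) reads \(a+b\le ab\), which holds for \(a,b\ge2\).

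\textbf{The balanced case.} For \(a=b=t\) this is exactly Lemma~\ref{lem:operator-certificate-Ktt}, which the paper notes works verbatim for nonnegative symmetric matrices. Then \(v=2t\), \(e=t^2\), \(\sigma=4t^2/(t^2+2t)=4t/(t+2)\) and \(\sigma'=4t^2/(3t^2-2t)=4t/(3t-2)\). The proposition therefore gives the displayed bound with exponent \(2e=2t^2\).

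\textbf{The unbalanced case.} For general \(a,b\) I would run a H\"older argument adapted to \(K_{a,b}\). Write \(S_J=\sum_i\prod_{c=1}^{a} w_{ij_c}\) for \(J\in[n]^a\), so that
\[
\homc(K_{a,b},W)=\sum_J S_J^{\,b}.
\]
Trying the same steps as before, with \(x\ge0\),
\[
\|Wx\|_a^a=\sum_J S_J\prod_c x_{j_c}\le \Bigl(\sum_J S_J^b\Bigr)^{1/b}\|x\|_{b'}^a .
\]
This only gives \(\|W\|_{b'\to a}\le \homc^{1/(ab)}\), which is the wrong pair of exponents when \(a\ne b\).

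To fix this, I would write \(\homc(K_{a,b},W)=\sum_{J'\in[n]^b}T_{J'}^{\,a}\) with the roles of the sides swapped. This gives the symmetric bound \(\|W\|_{a'\to b}\le\homc^{1/(ab)}\). Interpolating the two bounds with \(\theta=1/2\) (Theorem~\ref{thm:riesz-thorin}) gives an exponent pair \((p,r)\) with
\[
\frac1p=\frac12\Bigl(1-\frac1a+1-\frac1b\Bigr)=1-\frac{a+b}{2ab}=\frac1{s'},\qquad \frac1r=\frac12\Bigl(\frac1a+\frac1b\Bigr)=\frac1s .
\]
Hence \(\|W\|_{s'\to s}\le \homc(K_{a,b},W)^{1/(ab)}\), which is exactly the matrix certificate.

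\textbf{Conclusion and main obstacle.} Feeding this into Proposition~\ref{prop:subdivision-closure-certificate} proves the corollary for all \(a,b\ge2\). The only step I expect to need care is the unbalanced interpolation: checking that the two H\"older bounds are genuine operator-norm bounds for nonnegative (hence all) vectors, and that their midpoint lands exactly on \((s',s)\). The arithmetic above indicates that it does.
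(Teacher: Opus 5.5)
Your proposal is correct and follows the paper's route: verify the matrix certificate for $K_{a,b}$ via the H\"older argument of Lemma~\ref{lem:operator-certificate-Ktt}, then apply Proposition~\ref{prop:subdivision-closure-certificate}. You also supply a step that the paper's one-line ``same H\"older argument'' leaves implicit: for $a\ne b$ the raw H\"older bound only controls $\|W\|_{b'\to a}$, and interpolating it at $\theta=\tfrac12$ with the swapped (equivalently, dual) bound $\|W\|_{a'\to b}$ lands exactly on $(s',s)$. One small slip: $s'$ should be $\frac{2ab}{2ab-a-b}$, not $\frac{2ab}{ab-a-b}$; your later computation $1/p=1-\frac{a+b}{2ab}$ already uses the correct value.
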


\begin{proof}
The matrix version of the certificate for \(K_{a,b}\) follows from the
same H\"older argument used above for \(K_{t,t}\).  Applying
Proposition~\ref{prop:subdivision-closure-certificate} gives the first
claim.  For \(a=b=t\), the \(1\)-subdivision has \(2t+t^2\) vertices and
\(2t^2\) edges, so its certificate exponents are
$\sigma=\frac{4t^2}{2t+t^2}=\frac{4t}{t+2}$ and $       \sigma'=\frac{4t^2}{4t^2-(2t+t^2)}
        =
        \frac{4t}{3t-2}.$
\end{proof}

\begin{remark}[Comparison with domination]
Conlon--Lee proved that \(\operatorname{sub}(K_{t,t})\) is dominating.
Their proof uses a reflection/relocation mechanism, whereas
Proposition~\ref{prop:subdivision-closure-certificate} is a direct
operator-norm closure argument.  The conclusions are different:
domination gives a family of homomorphism inequalities, while the
operator-norm certificate is tailored to spectral Sidorenko inequalities.
The same subdivision-closure argument applies to any graph satisfying the
matrix version of the certificate.  In particular, it applies to weakly norming incidence graphs for which the matrix certificate is available. 
\end{remark}

\subsection{Equivalence of Sidorenko and spectral Sidorenko}

We now prove Theorem~\ref{thm:regular-subgraphs} and use it to derive Theorem \ref{thm:equivalence}.  The proof uses a tensor-power
argument, a useful tool in extremal combinatorics; see, for example,
\cite{ConlonFoxSudakov2010, FoxLovasz2017Adv, LovaszSauermann2019PLMS,Tao2008TensorPower}. The \emph{tensor product} \(G\otimes H\) of two graphs \(G\) and \(H\)
has vertex set \(V(G)\times V(H)\), where \((u_1,v_1)\) is adjacent to
\((u_2,v_2)\) if and only if \(u_1u_2\in E(G)\) and \(v_1v_2\in E(H)\).
The \(k\)-th tensor power of \(G\), denoted by \(G^{\otimes k}\), is
defined recursively by \(G^{\otimes 1}=G\) and
\(G^{\otimes k}=G^{\otimes(k-1)}\otimes G\).  Equivalently,
\(G^{\otimes k}\) has vertex set \(V(G)^k\), and two \(k\)-tuples
\(\bm u=(u_1,\dots,u_k)\) and \(\bm v=(v_1,\dots,v_k)\) are adjacent if
and only if \(u_i v_i\in E(G)\) for every \(i\in\{1,\dots,k\}\).

\begin{proof}[Proof of Theorem \ref{thm:regular-subgraphs}]
Let \(G_0\) be a connected component of \(G\) with vertex set \(V_0\)
and spectral radius \(\lambda(G_0)=\lambda (G)=\lambda \).  Let
\(A=(a_{ij})_{i,j\in V_0}\) be the adjacency matrix of \(G_0\), and let
\(\bm x=(x_i)_{i\in V_0}\) be a positive Perron eigenvector.  We scale
\(\bm x\) so that $\sum_{i\in V_0}x_i^2=1$. 

For ordered pairs \((i,j)\in V_0^2\), we define
\[
        p_{ij}:=\frac{a_{ij}x_i x_j}{\lambda}.
\]
Then \(p_{ij}=0\) if and only if \(ij\notin E(G_0)\).  Moreover, we have $ \sum_{i,j\in V_0}p_{ij}
        =
        \frac{1}{\lambda} \bm x^{\mathsf T}A\bm x
        =
        1$. 
        
For each \(i\in V_0\), we write 
\[ \pi_i:=x_i^2. \] 
Then $\sum_{i\in V_0} \pi_i =1$. Moreover, we have the following relation:  
\begin{equation} \label{eq:sum_j}
        \sum_j p_{ij}
        =
        \frac{x_i}{\lambda}\sum_j a_{ij}x_j
        =
        x_i^2 =\pi_i.
\end{equation}
In the above, we defined two probability distributions $p=(p_{ij})_{i,j\in V_0}$ and $\pi=(\pi_i)_{i\in V_0}$. 
We next show an interesting connection between them and the  spectral radius.
Let
$ H(\alpha):=-\sum_i \alpha_i\log\alpha_i$ 
be the entropy of a distribution $\alpha$, with the convention \(0\log0=0\). 
 We claim that 
\begin{equation} \label{eq-connection}
        H(p)-H(\pi)=\log\lambda .
\end{equation}
Indeed, by definition, we immediately get 
\[
\begin{aligned}
        H(p)-H(\pi)
        &=
        -\sum_{i,j}p_{ij}
        \log\left(\frac{p_{ij}}{\pi_i}\right)                              \\
        &=
        -\sum_{i,j:p_{ij}>0}p_{ij}
        \log\left(\frac{x_j}{\lambda x_i}\right)                            \\
        &=
        \log\lambda
        +
        \sum_{i,j}p_{ij}\log x_i
        -
        \sum_{i,j}p_{ij}\log x_j                                            \\
        &=
        \log\lambda
        +
        \sum_i \pi_i\log x_i
        -
        \sum_j \pi_j\log x_j                                                \\
        &=
        \log\lambda .
\end{aligned}
\]

We now construct a subgraph of \(G_0^{\otimes k}\).  
We do this along even values of \(k\).  Write \(k=2s\).  Since
\(p_{ij}=p_{ji}\), define a probability distribution on the unordered edges of \(G_0\) by
        $q_{ij}:=2p_{ij}$ for each $ij\in E(G_0).$
For each sufficiently large integer \(s\), choose nonnegative integers
\(m_{ij}^{(s)}\), one for each unordered edge \(ij\in E(G_0)\), such that $m_{ij}^{(s)}=s q_{ij}+O_G(1)$ and 
        $\sum_{ij\in E(G_0)}m_{ij}^{(s)}=s$. This can be done by rounding the numbers \(s q_{ij}\) and adjusting
finitely many entries.

Define a symmetric integer matrix
\(N^{(k)}=(N_{ij}^{(k)})_{i,j\in V_0}\) by
\[ N_{ij}^{(k)}=N_{ji}^{(k)}:=m_{ij}^{(k)}
\quad\text{if }ij\in E(G_0);
\qquad
  N_{ij}^{(k)}:=0
\quad\text{otherwise}.
\]
Then $\sum_{i,j}N_{ij}^{(k)}=k$. 
For all \(i,j\in V_0\), we have ${N_{ij}^{(k)}}/{k}\to p_{ij}$ as $k\to\infty$.

For each \(i\in V_0\), we denote 
\[
        n_i^{(k)}:=\sum_j N_{ij}^{(k)}.
\]
Then $\sum_{i\in V_0} n_i^{(k)}=k$, and for each $i\in V_0$, using (\ref{eq:sum_j}) yields   
${n_i^{(k)}}/ {k}\to \pi_i$ as $k\to \infty$.

Let \(T_k\) be the set of all \(k\)-tuples $\bm a=(a_1,\dots,a_k)\in V_0^k$ in which the vertex \(i\) occurs exactly \(n_i^{(k)}\) times.  Define a
graph \(F_k\) with vertex set \(T_k\) as follows.  Two vertices
\(\bm a=(a_1,\dots,a_k)\) and \(\bm b=(b_1,\dots,b_k)\) are adjacent in
\(F_k\) if, for every ordered pair \((i,j)\), the number of coordinates
\(r\in\{1,\dots,k\}\) such that
$  (a_r,b_r)=(i,j)$ 
is exactly \(N_{ij}^{(k)}\).

First, we see that \(F_k\subseteq G_0^{\otimes k}\).  Indeed, if
\(\bm a\bm b\in E(F_k)\), then every coordinate pair \((a_r,b_r)\) has
\(N_{a_r b_r}^{(k)}>0\), and hence \(a_r b_r\in E(G_0)\).  Therefore
\(\bm a\) and \(\bm b\) are adjacent in \(G_0^{\otimes k}\).

Second, \(F_k\) is regular.  Fix \(\bm a\in T_k\).  For each \(i\in V_0\),
there are \(n_i^{(k)}\) coordinates \(r\) with \(a_r=i\).  To choose a
neighbor \(\bm b\), we must partition these \(n_i^{(k)}\) coordinates
into classes of sizes \(N_{ij}^{(k)}\), one class for each \(j\in V_0\),
and assign \(b_r=j\) on the class corresponding to \(j\).  Thus the
number of choices is
\[
        d_k
        =
        \prod_i
        \frac{n_i^{(k)}!}{\prod_j N_{ij}^{(k)}!}.
\]
Since \(N^{(k)}\) is symmetric, the number of coordinates in which
\(\bm b\) is equal to \(j\) is
\[
        \sum_i N_{ij}^{(k)}
        =
        \sum_i N_{ji}^{(k)}
        =
        n_j^{(k)}.
\]
Hence \(\bm b\in T_k\), and the above number is independent of
\(\bm a\).  Therefore, \(F_k\) is \(d_k\)-regular.

Finally, we estimate \(d_k\).   Applying Stirling's
formula to the expression for \(d_k\), and using that \(G\) is fixed, we
obtain
\[
\begin{aligned}
        \log d_k
        &=
        \sum_i \log(n_i^{(k)}!)
        -
        \sum_{i,j}\log(N_{ij}^{(k)}!)                                      \\
        &\ge
        \sum_i n_i^{(k)}\log n_i^{(k)}
        -
        \sum_{i,j}N_{ij}^{(k)}\log N_{ij}^{(k)}
        -
        C\log k                                                            \\
        &\ge
        k\bigl(-H(\pi) + H(p) \bigr)-C\log k,
\end{aligned}
\]
where \(C=C(G)>0\) is a constant depending only on $G$.

Applying (\ref{eq-connection}) gives $\log d_k\ge k\log\lambda-C\log k$,
and therefore  $d_k\ge \lambda^k k^{-C}. $
The upper bound on \(d_k\) follows from monotonicity of the spectral
radius.  Since \(F_k\) is \(d_k\)-regular, we have $d_k=\lambda(F_k)\le \lambda(G_0^{\otimes k})=\lambda(G_0)^k=\lambda^k$.
Since \(F_k\subseteq G_0^{\otimes k}\subseteq G^{\otimes k}\), we get $M(F_k)\le M(G^{\otimes k})=M(G)^k$ immediately.
This completes the proof.
\end{proof}

We now prove the equivalence theorem.

\begin{proof}[Proof of Theorem~\ref{thm:equivalence}]
Assume throughout that \(v\le e\), and consider an arbitrary host graph \(G\) with
\(M(G)>0\). For notational convenience, we denote $M:=M(G)$ and $\lambda:=\lambda(G).$

First, we show that (ii) and (iii) implies (i).  Assume that \(H\) satisfies the edge-spectral Sidorenko inequality.
The Rayleigh quotient implies $\lambda\ge \frac{M}{|G|}.$
Since \(v\le e\), we have \(2e-v\ge0\).  Hence
\[
\homc(H,G)\ge\lambda^{2e-v}M^{v-e}\ge
        \left(\frac{M}{|G|}\right)^{2e-v}M^{v-e}=
        M^e |G|^{v-2e}.
\]
Thus \(H\) satisfies Sidorenko's inequality. Similarly, the vertex-spectral version implies 
\[ \homc (H,G) \ge \lambda^{e}|G|^{v-e} \ge 
\left( \frac{M}{|G|}\right)^e |G|^{v-e} = M^e |G|^{v-2e}. \]

Conversely, assume that \(H\) satisfies Sidorenko's inequality in (i). Next, we show that $H$ satisfies the spectral versions in (ii) and (iii).   
Let \(F_k\subseteq G^{\otimes k}\) be the regular subgraphs supplied by
Theorem~\ref{thm:regular-subgraphs}, along an infinite sequence of
integers \(k\).  Let $N_k:=|F_k|$, $M_k:=M(F_k)$,
and let \(d_k\) be the regular degree of \(F_k\).  Since \(F_k\) is
regular, we have $M_k=N_kd_k.$

The homomorphism count is multiplicative under tensor products, and
monotone under passing to subgraphs.  Therefore
\[
        \homc(H,G)^k
        =
        \homc(H,G^{\otimes k})
        \ge
        \homc(H,F_k).
\]
Applying Sidorenko's inequality to \(H\) with host graph \(F_k\), we get
\begin{equation} \label{eq-regular-sub}
\homc(H,F_k)
        \ge
        M_k^e N_k^{v-2e}                                      =
        M_k^e\left(\frac{M_k}{d_k}\right)^{v-2e}              =
        d_k^{2e-v}M_k^{v-e}.
\end{equation}
Since \(v\le e\), we have \(v-e\le0\).  Theorem~\ref{thm:regular-subgraphs}
gives \(M_k\le M^k\), and therefore $M_k^{v-e}\ge M^{k(v-e)}$.
It follows that
$        \homc(H,G)^k
        \ge
        d_k^{2e-v}M^{k(v-e)}.$
Using \(d_k\ge \lambda^k k^{-C}\), we obtain $\homc(H,G)^k
        \ge
        \left(\lambda^k k^{-C}\right)^{2e-v}M^{k(v-e)}$.
Taking \(k\)-th roots gives
\[
        \homc(H,G)
        \ge
        \lambda^{2e-v}k^{-C(2e-v)/k}M^{v-e}.
\]
Letting \(k\to\infty\), we get the edge-spectral Sidorenko inequality $\homc(H,G)\ge \lambda^{2e-v}M^{v-e}$. 

The vertex-spectral version is similarly obtained by substituting $M_k=N_kd_k$ into (\ref{eq-regular-sub}). Indeed, 
\[ \homc(H,F_k) \ge M_k^e N_k^{v-2e} = (N_kd_k)^e N_k^{v-2e} =N_k^{v-e} d_k^e. \]
Since $v-e\le 0$, $N_k\le |G|^k$ and $d_k \ge \lambda^k k^{-C}$ by Theorem~\ref{thm:regular-subgraphs}, we get $\homc (H,G)\ge |G|^{v-e} \lambda^{e} k^{-Ce/k}$. Taking $k\to \infty$, we  obtain the vertex-spectral version $\homc(H,G) \ge \lambda^e |G|^{v-e}$, as desired.    
\end{proof}

\section{Preliminary results for Theorem \ref{thm:sharp-Ktt}}

\label{sec:preliminaries}

For \(U\subseteq V(G)\), we write \(G[U]\) for the subgraph of \(G\)
induced by \(U\).  For disjoint sets \(X,Y\subseteq V(G)\), we write
\(E_G(X,Y)\) for the set of edges between \(X\) and \(Y\), and set
\(e_G(X,Y):=|E_G(X,Y)|\).  For two disjoint vertex sets \(A,B\), we
write \(G[A,B]\) for the bipartite subgraph of \(G\) with partite sets
\(A\) and \(B\).  For graphs \(G\) and \(H\) on disjoint vertex sets,
the \emph{join} \(G\vee H\) is obtained from \(G\sqcup H\) by adding
all edges between \(V(G)\) and \(V(H)\).  A \emph{split partition} of a
graph \(G\) means a partition \(V(G)=A\sqcup B\) such that \(B\) is
independent.

\subsection{Bounding \(\texttt{\#}K_{t,t}\) via spectral radius}
The spectral Sidorenko inequality \eqref{eq:Ktt} gives a lower bound on
\(\homc(K_{t,t},G)\).  By subtracting the non-injective homomorphisms,
we obtain the following lower bound for genuine copies of \(K_{t,t}\).

\begin{lemma}
\label{lem:spectral-count-Ktt}
Let \(t\ge 2\) and \(G\) be an \(n\)-vertex $m$-edge graph with spectral
radius \(\lambda\). Then        
\[
  \texttt{\#}\Ktt(G)
  \ge
  B_t\left(\frac{\lambda^2}{m}\right)^{t(t-1)}m^t
  -
  \frac{\binom{2t}{2}}{2(t!)^2}\,n^{2t-1}, \qquad\text{where}\quad B_t:=\frac{2^{-(t-1)^2}}{(t!)^2}. 
\]
\end{lemma}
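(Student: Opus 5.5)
We combine the spectral Sidorenko inequality for $K_{t,t}$ (Lemma~\ref{lem:operator-certificate-Ktt}) with a crude count of non-injective homomorphisms. Write $\homc(K_{t,t},G)=\inj(K_{t,t},G)+\mathrm{ninj}$, where $\mathrm{ninj}$ counts homomorphisms $V(K_{t,t})\to V(G)$ that identify at least two of the $2t$ vertices. Since $|\Aut(K_{t,t})|=2(t!)^2$ for $t\ge 2$, we get
\[
\texttt{\#}\Ktt(G)=\frac{\inj(K_{t,t},G)}{2(t!)^2}\ge \frac{\homc(K_{t,t},G)-\mathrm{ninj}}{2(t!)^2}.
\]

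For the main term, Lemma~\ref{lem:operator-certificate-Ktt} with $M(G)=2m$ gives
\[
\homc(K_{t,t},G)\ge \frac{\lambda^{2t(t-1)}}{(2m)^{t(t-2)}}=2^{-t(t-2)}\left(\frac{\lambda^2}{m}\right)^{t(t-1)} m^{t(t-1)}m^{-t(t-2)}=2^{-t(t-2)}\left(\frac{\lambda^2}{m}\right)^{t(t-1)}m^{t}.
\]
Dividing by $2(t!)^2$ produces the coefficient $2^{-t(t-2)-1}/(t!)^2$. Since $t(t-2)+1=(t-1)^2$, this is exactly $B_t=2^{-(t-1)^2}/(t!)^2$.

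For the error term, any non-injective map $\phi$ has some unordered pair $\{u,w\}$ of the $2t$ vertices with $\phi(u)=\phi(w)$. A union bound over the $\binom{2t}{2}$ pairs gives, for each fixed pair, at most $n^{2t-1}$ maps with $\phi(u)=\phi(w)$: choose the common image and the other $2t-2$ images freely, ignoring the homomorphism constraint. Hence $\mathrm{ninj}\le \binom{2t}{2}n^{2t-1}$, and dividing by $2(t!)^2$ yields the stated subtracted term.

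There is no real obstacle here. The only points to check are the automorphism count $2(t!)^2$, which uses $t\ge2$ so that the two sides may be swapped, and the exponent bookkeeping $t(t-2)+1=(t-1)^2$. If $m=0$ the inequality is trivial, since then $\lambda=0$ and the right-hand side is nonpositive. Otherwise $M(G)>0$, so Lemma~\ref{lem:operator-certificate-Ktt} applies.
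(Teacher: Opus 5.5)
Your proposal is correct and is essentially the paper's proof: you apply the spectral Sidorenko bound \eqref{eq:Ktt} for $K_{t,t}$ with $M(G)=2m$, subtract at most $\binom{2t}{2}n^{2t-1}$ non-injective maps via a union bound over identified pairs, and divide by $|\Aut(K_{t,t})|=2(t!)^2$. Your exponent check $t(t-2)+1=(t-1)^2$ matches the paper's identification of the main term with $B_t(\lambda^2/m)^{t(t-1)}m^t$.
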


\begin{proof}
By (\ref{eq:Ktt}), we get $\homc(\Ktt,G)
  \ge
  \frac{\lambda^{2t(t-1)}}{(2m)^{t(t-2)}}$. The number of non-injective maps \(V(\Ktt)\to V(G)\) is at most
\(\binom{2t}{2}n^{2t-1}\), since one may first choose a pair of vertices
of \(K_{t,t}\) that are identified.  Hence
\[
  \inj(\Ktt,G)
  \ge
  \homc(\Ktt,G)-\binom{2t}{2}n^{2t-1}.
\]
Since \(|\Aut(\Ktt)|=2(t!)^2\), we have
\[
  \texttt{\#}\Ktt(G)
  =
  \frac{\inj(\Ktt,G)}{2(t!)^2}
  \ge
  \frac{1}{2(t!)^2}
  \cdot
  \frac{\lambda^{2t(t-1)}}{(2m)^{t(t-2)}}
  -
  \frac{\binom{2t}{2}}{2(t!)^2}\,n^{2t-1}.
\]
The main term is
\[
  \frac{1}{2(t!)^2}
  \cdot
  \frac{\lambda^{2t(t-1)}}{(2m)^{t(t-2)}}
  =
  B_t\left(\frac{\lambda^2}{m}\right)^{t(t-1)}m^t,
\]
as desired.
\end{proof}

\subsection{Matrix inequalities for graphs}
\label{subsec:prelim-spectral}

We use standard spectral graph conventions; see, for example,
Chung~\cite{Chung1997SpectralGraphTheory} and
Horn--Johnson~\cite{HornJohnson2012MatrixAnalysis}.  For a graph \(G\),
let \(A(G)\) be its adjacency matrix and let \(\lambda(G)\) denote its
adjacency spectral radius.  For a real matrix \(A\), let \(\|A\|\) denote
the operator norm, $\|A\|
        =
        \max_{\bm{x}\neq 0}
        \frac{\|A\bm{x}\|_2}{\|\bm{x}\|_2}.$
We denote the Frobenius norm by $\|A\|_{\mathrm F}
        :=
    (\operatorname{tr}(A^{\mathsf T}A) )^{1/2}.$
Let $\sigma_1(A)\ge \cdots \ge \sigma_r(A)$
be the singular values of \(A\), where \(r:=\operatorname{rank}(A)\).
Then $\|A\|=\sigma_1(A),$ and $
        \|A\|_{\mathrm F}
=(\sum_{i=1}^r\sigma_i^2(A))^{1/2}.$
Since \(A(G)\) is symmetric and nonnegative, \(\lambda(G)=\|A(G)\|\).
The Perron--Frobenius theorem implies that \(\lambda(G)\) has an
eigenvector with nonnegative entries; if \(G\) is connected, then such an
eigenvector has strictly positive entries.  Throughout the paper, a
\emph{unit Perron eigenvector} of \(G\) means a vector
\(\bm{x}\in\mathbb R^{V(G)}\) satisfying
$\bm{x}\ge 0,$ $
        \|\bm{x}\|_2=1,$ and $
        A(G)\bm{x}=\lambda(G)\bm{x}.$ The Rayleigh quotient gives
\[
        \lambda(G)
        =
        \max_{\|\bm{y}\|_2=1}
        \bm{y}^{\mathsf T}A(G)\bm{y}.
\]
In particular, if \(H\subseteq G\), then \(\lambda(H)\le \lambda(G)\).
Moreover, if \(G\) has \(n\) vertices and \(m\) edges, then
\[
        \frac{2m}{n}\le \lambda(G)\le \|A(G)\|_{\mathrm F}=\sqrt{2m}.
\]

Let \(G\) be a graph with unit Perron eigenvector \(\bm{x}\).  For
\(U\subseteq V(G)\), we write
\[
        \mu(U):=\sum_{u\in U}x_u^2
\]
and call it the \emph{Perron mass} of \(U\).  We refer to
\cite{LiuNing2026, CLLLN2026} for recent results on Perron mass when
\(U\) is an independent set.

\begin{lemma}
\label{lem:spectral-vertex-cut}
Let \(H\) be a graph and let \(V(H)=U\sqcup W\) be a partition.
Let \(M\in\{0,1\}^{U\times W}\) be the \(U\)--\(W\) incidence matrix with $ m_{UW}:=e_H(U,W)=\|M\|_{\mathrm F}^2$ and  
$\rho:=\|M\|$. Set
\[ \lambda:=\lambda(H),
        \quad
        \lambda_U:=\lambda(H[U]),
        \quad
        \lambda_W:=\lambda(H[W]).
\]
Then the following hold.
\begin{enumerate}
  \item[\rm(a)] $\lambda
        \le
        \max\{\lambda_U,\lambda_W\}+\rho
        \le
        \max\{\lambda_U,\lambda_W\}+\sqrt{m_{UW}}.$

  \item[\rm(b)]
        $(\lambda-\lambda_U)(\lambda-\lambda_W)
        \le
        \rho^2
        \le
        m_{UW}.$

  \item[\rm(c)]
  Let \(\bm{x}\) be a unit Perron eigenvector of \(H\), and let
  \(\mu(U)=\sum_{u\in U}x_u^2\) be the Perron mass of \(U\) with respect
  to \(\bm{x}\).  If $(\lambda-\lambda_U)^2+\rho^2>0,$
  then
  \[
        \mu(U)
        \le
        \frac{\rho^2}{(\lambda-\lambda_U)^2+\rho^2}
        \le
        \frac{m_{UW}}{(\lambda-\lambda_U)^2+m_{UW}}.
  \]
\end{enumerate}
\end{lemma}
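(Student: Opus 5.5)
The plan is to write the adjacency matrix in block form
\[
A(H)=\begin{pmatrix} A_U & M\\ M^{\mathsf T} & A_W\end{pmatrix},
\]
where \(A_U=A(H[U])\) and \(A_W=A(H[W])\). Let \(\bm{x}\) be a unit Perron eigenvector, split as \(\bm{x}=(\bm{y},\bm{z})\) with \(\bm{y}\in\mathbb R^U\) and \(\bm{z}\in\mathbb R^W\). Write \(a:=\|\bm{y}\|_2\) and \(b:=\|\bm{z}\|_2\), so that \(a^2+b^2=1\) and \(a^2=\mu(U)\). The bound \(\rho=\|M\|\le\|M\|_{\mathrm F}=\sqrt{m_{UW}}\) disposes of every second inequality in (a)--(c), since each right-hand side is monotone in \(\rho^2\). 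For (c), the map \(s\mapsto s/(c+s)\) is increasing in \(s\) when \(c=(\lambda-\lambda_U)^2\ge0\).

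For (a) and (b), I would split the eigen-equation \(A(H)\bm{x}=\lambda\bm{x}\) into its two block rows:
\[
\lambda\bm{y}=A_U\bm{y}+M\bm{z},\qquad \lambda\bm{z}=M^{\mathsf T}\bm{y}+A_W\bm{z}.
\]
Taking norms gives \(\lambda a\le\lambda_U a+\rho b\) and \(\lambda b\le\lambda_W b+\rho a\), using \(\|A_U\|=\lambda_U\) (it is symmetric and nonnegative). Hence \((\lambda-\lambda_U)a\le\rho b\) and \((\lambda-\lambda_W)b\le\rho a\). Both factors \(\lambda-\lambda_U\) and \(\lambda-\lambda_W\) are nonnegative by monotonicity of the spectral radius, so the two inequalities can be multiplied to get
\[
(\lambda-\lambda_U)(\lambda-\lambda_W)\,ab\le\rho^2ab.
\]
The step I expect to be the main obstacle is the degenerate case \(ab=0\), where one cannot divide. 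If \(b=0\), then \(\bm{x}\) is supported on \(U\) and \(\lambda\bm{y}=A_U\bm{y}\), so \(\lambda=\lambda_U\) and (b) holds trivially. The case \(a=0\) is symmetric.

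Part (a) then follows from (b). Setting \(m:=\max\{\lambda_U,\lambda_W\}\), both factors are at least \(\lambda-m\ge0\), so \((\lambda-m)^2\le\rho^2\). Alternatively, one can bound directly \(\|A(H)\|\le\|\mathrm{diag}(A_U,A_W)\|+\|\text{off-diagonal part}\|=m+\rho\).

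For (c), I would use only the first inequality \((\lambda-\lambda_U)a\le\rho b\). Squaring it gives
\[
(\lambda-\lambda_U)^2a^2\le\rho^2(1-a^2),
\]
that is, \(a^2\bigl((\lambda-\lambda_U)^2+\rho^2\bigr)\le\rho^2\). Dividing by the positive quantity \((\lambda-\lambda_U)^2+\rho^2\) gives \(\mu(U)\le\rho^2/\bigl((\lambda-\lambda_U)^2+\rho^2\bigr)\), and the \(m_{UW}\) version follows by the monotonicity noted in the first paragraph.
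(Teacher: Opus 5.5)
Your proof is correct and follows essentially the same route as the paper. You block-decompose $A(H)$, extract $(\lambda-\lambda_U)\|\bm{y}\|_2\le\rho\|\bm{z}\|_2$ and $(\lambda-\lambda_W)\|\bm{z}\|_2\le\rho\|\bm{y}\|_2$ from the eigen-equation, multiply them for (b), and square the first for (c). The only differences are cosmetic: you get these inequalities by taking norms rather than pairing with the $U$-part of the eigenvector (so no division is needed until the $ab=0$ split), and you derive (a) from (b), while the paper uses the operator-norm triangle inequality, which you also mention.
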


\begin{proof}
Order the vertices so that the vertices of \(U\) come first and the
vertices of \(W\) come last.  Then
\[
  A(H)=
  \begin{pmatrix}
    A(H[U]) & M\\
    M^{\mathsf T} & A(H[W])
  \end{pmatrix}.
\]

\smallskip
\noindent\textup{(a).}
We write \(A(H)=D+C\), where \(D\) is block diagonal with diagonal blocks
\(A(H[U])\) and \(A(H[W])\), and \(C\) has off-diagonal blocks \(M\) and
\(M^{\mathsf T}\).  Then
\[
        \|D\|=\max\{\lambda_U,\lambda_W\}.
\]
Also,
\[
        C^{\mathsf T}C
        =
        \begin{pmatrix}
          MM^{\mathsf T} & 0\\
          0 & M^{\mathsf T}M
        \end{pmatrix},
\]
and hence \(\|C\|=\|M\|=\rho\).  By the triangle inequality for the
operator norm,
\[
        \lambda=\|A(H)\|
        \le
        \|D\|+\|C\|
        =
        \max\{\lambda_U,\lambda_W\}+\rho.
\]
Finally, \(\rho\le \|M\|_{\mathrm F}=\sqrt{m_{UW}}\), proving (a).

\smallskip
\noindent\textup{(b).}
By monotonicity, we have \(\lambda_U,\lambda_W\le \lambda\).  If
\(\lambda=\lambda_U\) or \(\lambda=\lambda_W\), then the desired
inequality is immediate.  Thus we may assume that $\lambda>\lambda_U$ and $\lambda>\lambda_W.$
Let $\bm{x}=
        \binom{\bm{u}}{\bm{v}}$
be a unit Perron eigenvector of \(H\), where
\(\bm{u}\in\mathbb R^U\) and \(\bm{v}\in\mathbb R^W\).  Under the strict
inequalities above, both \(\bm{u}\) and \(\bm{v}\) are nonzero.  From
\(A(H)\bm{x}=\lambda\bm{x}\), we have
\[
        (\lambda I-A(H[U]))\bm{u}=M\bm{v}.
\]
Therefore $\bm{u}^{\mathsf T}(\lambda I-A(H[U]))\bm{u}
        =
        \bm{u}^{\mathsf T}M\bm{v}.$
Since $\bm{u}^{\mathsf T}A(H[U])\bm{u}
        \le
        \lambda_U\|\bm{u}\|_2^2,$
we obtain
\[
        (\lambda-\lambda_U)\|\bm{u}\|_2^2
        \le
        \bm{u}^{\mathsf T}M\bm{v}
        \le
        \|\bm{u}\|_2\,\|M\bm{v}\|_2
        \le
        \rho\|\bm{u}\|_2\|\bm{v}\|_2.
\]
Dividing by \(\|\bm{u}\|_2\) gives
\begin{equation}
\label{eq-lambda-lambdaU}
        (\lambda-\lambda_U)\|\bm{u}\|_2
        \le
        \rho\|\bm{v}\|_2.
\end{equation}
Similarly, from
$(\lambda I-A(H[W]))\bm{v}=M^{\mathsf T}\bm{u},$
we get
\begin{equation}
\label{eq-lambda-W}
        (\lambda-\lambda_W)\|\bm{v}\|_2
        \le
        \rho\|\bm{u}\|_2.
\end{equation}
Multiplying \eqref{eq-lambda-lambdaU} and \eqref{eq-lambda-W} yields $(\lambda-\lambda_U)(\lambda-\lambda_W)\le \rho^2.$
The second inequality follows from $\rho^2=\|M\|^2\le \|M\|_{\mathrm F}^2=m_{UW}$.
This proves (b).

\smallskip
\noindent\textup{(c).}
If \(\lambda=\lambda_U\), then the hypothesis
\((\lambda-\lambda_U)^2+\rho^2>0\) forces \(\rho>0\), and the first
right-hand side is equal to \(1\).  Hence the desired inequality
\(\mu(U)\le 1\) is trivial.

We may therefore assume that \(\lambda>\lambda_U\).  With the notation
from the proof of part \textup{(b)}, squaring \eqref{eq-lambda-lambdaU}
and using $\|\bm{u}\|_2^2=\mu(U)$ and  $\|\bm{v}\|_2^2=1-\mu(U),$
gives $(\lambda-\lambda_U)^2\mu(U)
        \le
        \rho^2(1-\mu(U)).$ Rearranging,
\[
        \mu(U)
        \le
        \frac{\rho^2}{(\lambda-\lambda_U)^2+\rho^2}.
\]
Since the function $z\mapsto \frac{z}{(\lambda-\lambda_U)^2+z}$ is increasing for \(z\ge 0\), and since \(\rho^2\le m_{UW}\), we also get
\[
        \frac{\rho^2}{(\lambda-\lambda_U)^2+\rho^2}
        \le
        \frac{m_{UW}}{(\lambda-\lambda_U)^2+m_{UW}}.
\]
This proves (c).
\end{proof}

\subsection{Increment on spectral radius of split graph}

We first give the asymptotic of $\lambda(S_{k,m})$.

\begin{lemma}\label{lem:Skm-asymptotic-lambda}
Fix $k\ge 1$ and $m\ge \binom{k}{2}$. In the divisible case $m-\binom{k}{2}=kq$, we have
\begin{equation}\label{eq:Skm-lambda-divisible}
  \lambda(S_{k,m})
  =
  \frac{k-1+\sqrt{4m-k^2+1}}{2}.
\end{equation} 
Moreover, in the general case, we have
\begin{equation}\label{eq:Skm-lambda-asymptotic}
  \lambda(S_{k,m}) = \sqrt m+\frac{k-1}{2}+O_k(m^{-1/2}).
\end{equation}
\end{lemma}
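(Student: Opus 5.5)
In the divisible case the plan is an equitable partition. \(S_{k,m}\) is \(K_k\vee \overline{K_q}\), with \(q=(m-\binom{k}{2})/k\). Its Perron vector is constant on the clique, value \(a\), and constant on the independent set, value \(b\), by symmetry and uniqueness of the Perron vector of a connected graph. The eigen-equations are
\[
\lambda a=(k-1)a+qb,\qquad \lambda b=ka,
\]
so \(\lambda\) is the largest root of \(\lambda^2-(k-1)\lambda-kq=0\). Substituting \(kq=m-\binom{k}{2}\) gives discriminant
\[
(k-1)^2+4m-2k(k-1)=4m-k^2+1,
\]
which yields \eqref{eq:Skm-lambda-divisible}. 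Expanding \(\sqrt{4m-k^2+1}=2\sqrt m+O_k(m^{-1/2})\) then gives \eqref{eq:Skm-lambda-asymptotic} in this case.

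For the general case \(m-\binom{k}{2}=kq+r\) with \(1\le r\le k-1\), I will sandwich \(S_{k,m}\) between divisible-type graphs.

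\emph{Lower bound.} \(S_{k,m}\) contains \(K_k\vee\overline{K_q}\), which has \(m'=m-r\) edges. So
\[
\lambda(S_{k,m})\ge \frac{k-1+\sqrt{4(m-r)-k^2+1}}{2}=\sqrt m+\frac{k-1}{2}+O_k(m^{-1/2}).
\]

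\emph{Upper bound.} \(S_{k,m}\) is a subgraph of \(K_k\vee\overline{K_{q+1}}\), which has \(m+(k-r)\le m+k\) edges. The closed formula applies to it as well, giving
\[
\lambda(S_{k,m})\le \frac{k-1+\sqrt{4(m+k-r)-k^2+1}}{2}=\sqrt m+\frac{k-1}{2}+O_k(m^{-1/2}).
\]

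Both bounds use monotonicity of \(\lambda\) under subgraphs, stated in Subsection~\ref{subsec:prelim-spectral}. The remaining task is to check uniformly that \(\sqrt{4m+c}=2\sqrt m+O(|c|/\sqrt m)\) for \(|c|=O_k(1)\) and \(m\) large; for bounded \(m\) the statement holds trivially by enlarging the constant.

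The only mildly delicate point is justifying that the Perron eigenvalue is the larger root of the quotient equation. This follows because the constant-on-parts positive vector built from that root is a positive eigenvector, and for a connected graph the only eigenvalue with a positive eigenvector is \(\lambda\) (Perron--Frobenius).
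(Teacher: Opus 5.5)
Your proposal is correct and follows the paper's proof essentially step for step. In the divisible case you use the same quotient equations on $K_k\vee\overline{K_q}$; in the general case you use the same sandwich $S_{k,m-r}\subseteq S_{k,m}\subseteq S_{k,m+k-r}$ together with monotonicity of $\lambda$. The only difference is that the paper treats the degenerate case $q=0$ (where $S_{k,m}=K_k$) separately. Your positive-eigenvector argument covers it just as well, since the larger root of the quadratic is then $k-1=\lambda(K_k)$.
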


\begin{proof}
We write $m-\binom{k}{2}=kq+r$ with integers \(q\ge0\) and \(0\le r\le k-1\).
If $r=0$, i.e., in the divisible case, then $S_{k,m}=K_k\vee qK_1$.
If \(q=0\), then \(S_{k,m}=K_k\), and the formula is immediate. Thus assume \(q\ge1\).
Let $V(S_{k,m}):=X\cup Y$ be a vertex partition, where $X$ consists of the vertices of the clique $K_k$, and $Y$ consists of the vertices of the independent set $qK_1$.  
Note that the Perron eigenvector $\bm{x} =(x_v)_{v\in V(S_{k,m})}$ has the same coordinates on the vertices of $X$, and also on the vertices of \(Y\). We may assume that $x_v=a$ for all $v\in X$,  and $x_u=b$ for all $u\in Y$. 
Then $\lambda a=(k-1)a+qb$ and $\lambda b=ka$. Thus, $\lambda (S_{k,m})$ is the largest eigenvalue of 
\[
  \begin{pmatrix}
    k-1 & q \\ k & 0 
  \end{pmatrix}.
\]
So $\lambda(S_{k,m})$ is the largest positive root of $x^2-(k-1)x-kq=0$,
which yields  
\[
  \lambda (S_{k,m})
  =
  \frac{k-1 +\sqrt{(k-1)^2 + 4kq}}{2}.
\]
This is the exact formula in \eqref{eq:Skm-lambda-divisible} since $m=\binom{k}{2}+kq$.

Since $\sqrt{4m-k^2+1}=2\sqrt m+O_k(m^{-1/2})$,  (\ref{eq:Skm-lambda-divisible}) implies \eqref{eq:Skm-lambda-asymptotic} in the divisible case.  
In the general case where $1\le r \le k-1$, set $m_-:=m-r$ and $m_+:=m+k-r$. Then $S_{k,m_-}\subseteq S_{k,m}\subseteq S_{k,m_+}$. 
The monotonicity of the spectral radius implies $\lambda(S_{k,m_-})
   \le
  \lambda(S_{k,m})
   \le
  \lambda(S_{k,m_+}).$
Note that the split graphs $S_{k,m_-}$ and $S_{k,m_+}$ are in the divisible case. Thus, applying \eqref{eq:Skm-lambda-divisible} twice, and using $\sqrt{m_\pm}=\sqrt m+O_k(m^{-1/2})$,  
we obtain the desired bound in \eqref{eq:Skm-lambda-asymptotic}.
\end{proof}

The following discrete increment estimate on $\lambda (S_{k,m})$ will be used later.

\begin{lemma} 
\label{lem:Skm-increment}
Fix $k\ge 1$ and $m\ge \binom{k}{2}+1$.
Then
$\lambda(S_{k,m})-\lambda(S_{k,m-1})
   \ge
  \frac{1}{2k(\sqrt m+k)}.$
Consequently, for every $d\in\{0,1,\dots,m-\binom{k}{2}\}$, we have 
$
  \lambda(S_{k,m-d})
   \le
  \lambda(S_{k,m})-\frac{d}{2k(\sqrt m+k)}.$
\end{lemma}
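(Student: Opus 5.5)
The plan is to compare the two Perron eigenvectors directly, using the fact that \(S_{k,m-1}\) is obtained from \(S_{k,m}\) by deleting a single edge.

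\textbf{Step 1: one edge deletion.} Write \(m-\binom{k}{2}=kq+r\). If \(r\ge 1\), the extra vertex \(w\) of \(S_{k,m}\) has \(r\) clique neighbours, and deleting one edge \(uw\) (with \(u\) in the clique \(X\)) gives \(S_{k,m-1}\), where \(w\) is possibly left isolated. If \(r=0\) and \(q\ge1\), deleting one edge from some independent vertex \(w\) to \(u\in X\) leaves a vertex joined to \(k-1\) clique vertices, which is again \(S_{k,m-1}\). In both cases I view \(S_{k,m-1}\) on the same vertex set, with \(A(S_{k,m})-A(S_{k,m-1})=E_{uw}+E_{wu}\).

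\textbf{Step 2: the eigenvector identity.} Let \(\bm x\) be the unit Perron vector of \(S_{k,m}\) and \(\bm y\) that of \(S_{k,m-1}\), both nonnegative. Write \(\lambda_m=\lambda(S_{k,m})\). Pairing \(\bm x^{\mathsf T}A(S_{k,m})\bm y\) in two ways gives
\[
(\lambda_m-\lambda_{m-1})\,\bm x^{\mathsf T}\bm y=x_uy_w+x_wy_u\ge x_wy_u .
\]
Since \(0<\bm x^{\mathsf T}\bm y\le 1\) by Cauchy--Schwarz, this yields \(\lambda_m-\lambda_{m-1}\ge x_w y_u\).

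\textbf{Step 3: lower bounds on the entries.} This is the main obstacle. I need \(y_u\) and \(x_w\) to be of order \(1/\sqrt{k}\) and \(1/(\sqrt{k}\,\lambda_m)\), respectively. I would proceed as follows.

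For \(\bm y\), let \(a\) be the minimum entry on \(X\). Every clique vertex is adjacent to the whole independent set \(Y\) of size \(q\) and to the rest of \(X\); its entry differs from another clique vertex's only through the partial vertex. So the clique entries are comparable, within a factor \(1+O(1/\lambda)\), and in fact \(y_c\ge a\) while \(\max_X y\le a(1+k/\lambda)\).

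Every non-clique vertex \(v\) satisfies \(\lambda y_v\le k\max_X y\). Hence
\[
1\le k\max_X y^2+(q+1)\Bigl(\frac{k\max_X y}{\lambda}\Bigr)^2 .
\]
Using \(\lambda^2\ge kq\), which follows from the quadratic \(x^2-(k-1)x-kq\) in Lemma~\ref{lem:Skm-asymptotic-lambda} together with monotonicity, I get \(\max_X y^2\ge 1/(2k+o(1))\). More carefully, I would aim for \(y_u^2\ge 1/(2k)\) up to the factors absorbed by the \(+k\) in the denominator.

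The same bound holds for \(\bm x\) on \(X\). Then \(x_w\ge \min_X x/\lambda_m\), because \(w\) has at least one clique neighbour.

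\textbf{Step 4: conclude.} Combining the bounds gives \(\lambda_m-\lambda_{m-1}\ge 1/(2k\lambda_m)\). By \eqref{eq:Skm-lambda-divisible} and monotonicity, \(\lambda_m\le \sqrt m+k\), which gives the first claim; any slack in the constants is absorbed by the \(+k\) term.

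\textbf{Step 5: telescoping.} For the consequence, sum the first inequality over \(m-d+1,\dots,m\). Each summand is at least \(1/(2k(\sqrt{m}+k))\) because \(\sqrt{m-j}\le\sqrt m\), and all the intermediate values stay at least \(\binom{k}{2}+1\) by the range of \(d\).
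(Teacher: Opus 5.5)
\textbf{The gap is in Steps 3--4.} Your Steps 1, 2 and 5 are fine. The pairing identity $(\lambda_m-\lambda_{m-1})\,\bm x^{\mathsf T}\bm y=x_uy_w+x_wy_u$ is a legitimate alternative to the paper's argument, which uses only the Perron vector of $S_{k,m-1}$ and plugs in a test vector. The problem is that the lemma is sharp to leading order, so the entry bounds cannot hold only ``up to $o(1)$'' or ``within $1+O(1/\lambda)$''.

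Consider the case $r=1$, where $S_{k,m-1}=K_k\vee qK_1$ and $w$ becomes a pendant. There your identity gives an increment of $(1+O(k/\sqrt m))/(2k\sqrt m)$. Since $\lambda_m=\sqrt m+\frac{k-1}{2}+O(m^{-1/2})$, the product of clique coordinates may fall below $\frac1{2k}$ only by a relative amount of about $\frac{k+1}{2\sqrt m}$ for large $m$, and only if you use this sharp value of $\lambda_m$.

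Step 4 spends the whole $+k$ on $\lambda_m\le\sqrt m+k$. So Step 3 would have to deliver $x_wy_u\ge\frac{1}{2k\lambda_m}$ with no loss at all, and ``any slack is absorbed by the $+k$'' counts that term twice. Your bounds do lose:
\begin{itemize}
\item $\lambda^2\ge kq$ only gives $\max_X y^2\ge\frac{1}{k(2+1/q)}<\frac1{2k}$, and gives nothing when $q=0$, which lies inside the range $m\ge\binom k2+1$.
\item Passing from $\max_X$ to $\min_X$ via $\max_X y\le a(1+k/\lambda)$ costs a relative factor of order $k/\sqrt m$. For $k\ge2$ this exceeds the true margin $\frac{k+1}{2\sqrt m}$.
\item Writing $x_w\ge\min_X x/\lambda_m$ and discarding $x_uy_w$ throws away the factor $\deg(w)\ge2$, which is exactly the room available when $r\ne1$.
\end{itemize}
So the claim in Step 4 that the bounds combine to $\frac{1}{2k\lambda_m}$ does not follow.

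\textbf{What is missing is exact control of the clique coordinates.}
\begin{itemize}
\item Since $Y$ is independent, $B^{\mathsf T}\bm x_X=\lambda\bm x_Y$. Hence $\lambda=\bm x_X^{\mathsf T}(J_k-I_k)\bm x_X+2\lambda\|\bm x_Y\|_2^2$, which gives $\|\bm x_X\|_2^2\ge\frac12$ and $\max_X x^2\ge\frac1{2k}$ with no error term. This is the paper's key step, and it applies equally to $\bm x$ and $\bm y$.
\item $(\lambda+1)(x_c-x_{c'})=x_w$ whenever $c\sim w\not\sim c'$. So the neighbours of the partial vertex carry the maximal clique coordinate.
\end{itemize}

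With these two facts your route closes cleanly:
\begin{itemize}
\item If $r=1$, or $k=1$ (the star case), then $x_wy_u=x_uy_u/\lambda_m$ with $x_u=\max_X x$ and $\bm y$ constant on $X$. Hence $x_wy_u\ge\frac1{2k\lambda_m}$.
\item Otherwise $w$ keeps $s\ge1$ neighbours in $S_{k,m-1}$, all carrying $\max_X y$. The term you dropped then satisfies $x_uy_w=\max_X x\cdot s\max_X y/\lambda_{m-1}\ge\frac1{2k\lambda_m}$.
\end{itemize}
Combine this with $\lambda_m\le\frac{k-1+\sqrt{(k-1)^2+4m}}{2}\le\sqrt m+k-1$ to get the lemma. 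This version replaces the paper's case-specific test-vector computations (optimizing over $\alpha$, showing $b\ge a/2$) by a single identity. The cost is that you need the exact entry bound for two Perron vectors rather than one.
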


\begin{proof}
We first handle the case \(k=1\). Then \(S_{1,m}\) is the star with \(m\) edges, so $\lambda(S_{1,m})=\sqrt m$. Hence
\[
        \lambda(S_{1,m})-\lambda(S_{1,m-1})
        =
        \sqrt m-\sqrt{m-1}
        =
        \frac{1}{\sqrt m+\sqrt{m-1}}
        \ge
        \frac{1}{2(\sqrt m+1)}.
\]
The corresponding \(d\)-step bound follows by summing this one-step estimate. Thus we may assume \(k\ge2\).

Let $F:=S_{k,m-1}$ and $F^+:=S_{k,m}$, and we write $\lambda_0:=\lambda(F)$.
Let $\bm{x}$ be a unit Perron eigenvector of $F$.
Let $C\subseteq V(F)$ be the $k$-clique and $I:=V(F)\setminus C$, which is independent. We  write $\bm{x}={\bm{x}_C \choose \bm{x}_I}$.

We first show that $\|\bm{x}_C\|_2^2\ge \tfrac12$.
Indeed, in the vertex order $C,I$, we have 
\[
  A(F)=
  \begin{pmatrix}
    J_k-I_k & B\\
    B^{\mathsf T} & 0
  \end{pmatrix},
\]
where $B$ is the $C\times I$ incidence matrix.
From $B^{\mathsf T} \bm{x}_C=\lambda_0 \bm{x}_I$, we get $\bm{x}_C^{\mathsf T}B \bm{x}_I=\lambda_0\| \bm{x}_I\|_2^2.$
Thus
\[
  \lambda_0
  =\bm{x}^{\mathsf T}A(F) \bm{x}
  = \bm{x}_C^{\mathsf T}(J_k-I_k) \bm{x}_C+ 2 \bm{x}_C^{\mathsf T}B \bm{x}_I 
  =
  \bm{x}_C^{\mathsf T}(J_k-I_k) \bm{x}_C+2\lambda_0\| \bm{x}_I\|_2^2.
\]
Since $\bm{x}_C^{\mathsf T}(J_k-I_k)\bm{x}_C\ge 0$, we have $\|\bm{x}_I\|_2^2\le \tfrac12$ and hence $\|\bm{x}_C\|_2^2\ge \tfrac12$.

Let $a:=\max_{v\in C} \{x_v\}$.
Then
\begin{equation}\label{eq:Skm-increment-a2}
  a^2\ \ge\ \frac{\|\bm{x}_C\|_2^2}{k}\ \ge\ \frac{1}{2k}.
\end{equation} 
We also use the crude bound
\begin{equation}\label{eq:Skm-increment-lambda0}
  \lambda_0+1\ \le\ \sqrt m+k.
\end{equation}
To see this, we write $A(F)$ in the same block form above, and let ${\bm{u} \choose \bm{v}}$ be a unit vector with $\bm{u}\in\mathbb R^C$ and $\bm{v} \in\mathbb R^I$.
Denote $\|\bm{u}\|_2=\alpha$ and $\|\bm{v}\|_2=\beta$, we have $\alpha^2+\beta^2=1$. 
Using the Rayleigh quotient, we get
\[
        \bm{u}^{\mathsf T}(J_k -I_k) \bm{u}
        \le
        (k-1) \|\bm{u}\|_2^2.
\]
Using Cauchy--Schwarz gives 
\[
  (\bm{u}^{\mathsf T},\bm{v}^{\mathsf T})A(F)
  \begin{pmatrix}
    \bm{u} \\ \bm{v}
  \end{pmatrix}
  \le
  (k-1)\alpha^2+2\|B\|\alpha\beta.
\]
Since $ \|B\|^2 \le \|B\|_{\mathrm F}^2=e_F(C,I)=m-1-\binom{k}{2}\le m$, maximizing over $\alpha^2+\beta^2=1$ gives
\[
  \lambda_0
  \ \le\
  \frac{k-1+\sqrt{(k-1)^2+4m}}{2}
  \ \le\
  \sqrt m+k-1,
\]
which implies \eqref{eq:Skm-increment-lambda0}.

We write $m-\binom{k}{2}=kq+r$ with $0\le r\le k-1$.
If $r=1$, then $F=K_k\vee qK_1$ and $F^+$ is obtained from $F$ by adding a new vertex $w$ adjacent to one clique vertex $u$.
By symmetry of $F$, all clique vertices have the same Perron coordinate, so $x_u=a$.
For $\alpha\ge 0$, consider the vector $\bm{y}:={\bm{x} \choose \alpha}$ on $V(F^+)$.
Then $\| \bm{y}\|_2^2=1+\alpha^2$ and
\[
        \bm{y}^{\mathsf T}A(F^+) \bm{y}=\lambda_0+2a\alpha.
\]
Applying the Rayleigh quotient,
\[
  \lambda(F^+)
  \ \ge\
  \max_{\alpha\ge 0}\frac{\lambda_0+2a\alpha}{1+\alpha^2}
  =
  \frac{\lambda_0+\sqrt{\lambda_0^2+4a^2}}{2}.
\]
Thus
\[
  \lambda(F^+)-\lambda_0
  \ge
  \frac{2a^2}{\sqrt{\lambda_0^2+4a^2}+\lambda_0}
  \ge
  \frac{a^2}{\lambda_0+1},
\]
where the last inequality holds because $\sqrt{\lambda_0^2+4a^2}\le \lambda_0+2$.
Using \eqref{eq:Skm-increment-a2} and \eqref{eq:Skm-increment-lambda0} gives the desired bound.

If $r\neq 1$, then $F^+$ is obtained from $F$ by adding one edge $wv$, where $w$ is a vertex of the independent set of $F$
and $v$ is a vertex of the clique $K_k$.
Let $s:=\deg_F(w)\in\{1,\dots,k-1\}$. 
Recall that $a=\max_{v\in C}x_v$. 
By symmetry, we may assume that $\bm{x}$ takes value $a$ on the $s$ neighbors of $w$, and value $b$ on the remaining vertices of $K_k$.  
Write $d:=x_w$.
The eigenvector equation at \(w\) gives
\[
        \lambda_0 d=sa,
        \qquad\text{that is,}\qquad
        d=\frac{sa}{\lambda_0}.
\]
In addition, comparing the eigenvector equations at a neighbor and a non-neighbor of $w$ gives 
\[
        \lambda_0 a - \lambda_0 b = b-a+d,
\]
so  $(\lambda_0+1)(a-b)=d.$
Thus
\[
  b
  =
  a-\frac{d}{\lambda_0+1}
  =
  a\left(1-\frac{s}{\lambda_0(\lambda_0+1)}\right)
  \ge
  \left(1-\frac1k \right)a
  \ge
  \frac{a}{2},
\]
where we used $\lambda_0\ge k-1 \ge s$.
Therefore $bd\ge \frac{sa^2}{2\lambda_0}$. By the Rayleigh quotient,
\[
  \lambda(F^+)-\lambda_0
  \ge
  2bd
  \ge
  \frac{sa^2}{\lambda_0}
  >
  \frac{a^2}{\lambda_0+1}.
\]
Combining \eqref{eq:Skm-increment-a2} and \eqref{eq:Skm-increment-lambda0} gives the desired bound. 

The bound for $\lambda(S_{k,m-d})$ follows by summing the one-step estimate for $i=m-d+1,\dots,m$ and using $\sqrt i+k\le \sqrt m+k$. This completes the proof.
\end{proof}

\subsection{Heavy-edge subgraph with the split gap}

We employ an edge-deletion technique that removes edges whose endpoints have small product of Perron weights. 
This technique has its roots in Nikiforov  \cite{Nikiforov2007C4FreeMaxSpectralRadius}, 
and was later developed by Nikiforov \cite{Niki2021} to solve a spectral problem related to books; it was subsequently applied by Ning--Zhai \cite{NZ2021b} to study spectral supersaturation for \(4\)-cycles. 
We start with the following definition.

\begin{definition} \label{def:heavy-edge}
Let $H$ be an $m$-edge graph and let $\bm{x}=(x_v)_{v\in V(H)}$ be a unit Perron eigenvector of $H$.
For $\eta>0$, we say that an edge $uv\in E(H)$ is \emph{$\eta$-heavy} if
\[
        x_u x_v \ge \frac{\eta}{\sqrt m}.
\]
We say that $H$ satisfies the \emph{$\eta$-heavy-edge condition} if every edge of $H$ is $\eta$-heavy. 
\end{definition}

\begin{lemma} \label{lem:lemmaA-pruning}
Fix $t\ge 2$ and $\eta:=1/(16t)$.
For all sufficiently large $m$, if $G$ is an $m$-edge graph with $\lambda(G) > \lambda(S_{t-1,m})$, then $G$ contains a subgraph $H\subseteq G$ with
$m':=e(H)$ and a unit Perron eigenvector $\bm{x}$ such that $H$ satisfies the $\eta$-heavy-edge condition and $\lambda(H)>\lambda(S_{t-1,m'}).$
Moreover, $\frac{\lambda(H)}{\sqrt{m'}}
        \ge
        1+(1-4\eta)\bigl(\alpha^{-1/2}-1\bigr)$, where $\alpha:=\frac{m'}{m}.$
\end{lemma}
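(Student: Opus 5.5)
The plan is to prune iteratively, always using the Perron vector of the current graph. Set \(G_0:=G\) and \(m_0:=m\). Given \(G_i\) with \(m_i:=e(G_i)\), fix a unit Perron eigenvector \(\bm{x}^{(i)}\) of \(G_i\). Any nonnegative unit Perron vector will do if \(G_i\) is disconnected, since Definition~\ref{def:heavy-edge} only refers to the chosen vector. Let \(D_i\) be the set of edges \(uv\) of \(G_i\) with \(x^{(i)}_ux^{(i)}_v<\eta/\sqrt{m_i}\).

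If \(D_i=\emptyset\), stop and output \(H:=G_i\) with \(\bm{x}:=\bm{x}^{(i)}\); this graph satisfies the \(\eta\)-heavy-edge condition by construction. Otherwise set \(G_{i+1}:=G_i-D_i\) and \(d_i:=|D_i|\), so that \(m_{i+1}=m_i-d_i\). The number of edges strictly decreases, so the process terminates. The single estimate driving everything is the Rayleigh quotient applied to the old vector \(\bm{x}^{(i)}\) in \(G_{i+1}\):
\[
\lambda(G_{i+1})\ \ge\ (\bm{x}^{(i)})^{\mathsf T}A(G_{i+1})\bm{x}^{(i)}\ =\ \lambda(G_i)-2\sum_{uv\in D_i}x^{(i)}_ux^{(i)}_v\ \ge\ \lambda(G_i)-\frac{2\eta d_i}{\sqrt{m_i}}.
\]

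\emph{The ``moreover'' bound.} Since \(m_i\) is decreasing, \(d_i/\sqrt{m_i}\le\int_{m_{i+1}}^{m_i}x^{-1/2}\,dx\). Telescoping gives \(\sum_i d_i/\sqrt{m_i}\le 2(\sqrt m-\sqrt{m'})\). Using \(\lambda(G)>\lambda(S_{t-1,m})\ge\sqrt m\), which holds by Lemma~\ref{lem:Skm-asymptotic-lambda} or simply because \(S_{t-1,m}\) contains a star \(K_{1,m-\binom{t-1}{2}}\) plus the clique, we get
\[
\lambda(H)\ \ge\ \sqrt m-4\eta(\sqrt m-\sqrt{m'}).
\]
Dividing by \(\sqrt{m'}\) gives exactly \(\lambda(H)/\sqrt{m'}\ge 1+(1-4\eta)(\alpha^{-1/2}-1)\). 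This also keeps every \(m_i\) large: \(\lambda(G_i)\ge(1-4\eta)\sqrt m\) and \(\lambda(G_i)^2\le 2m_i\) give \(m_i\ge(1-4\eta)^2m/2\). So all asymptotic estimates can be applied uniformly for \(m\) sufficiently large, and \(m_i\ge\binom{t-1}{2}+d_i\) holds trivially.

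\emph{Preserving the split gap.} This is the main point. By induction I will show that \(\lambda(G_i)-\lambda(S_{t-1,m_i})\) is positive and nondecreasing in \(i\). With \(k=t-1\), Lemma~\ref{lem:Skm-increment} gives
\[
\lambda(S_{t-1,m_{i+1}})\ \le\ \lambda(S_{t-1,m_i})-\frac{d_i}{2(t-1)(\sqrt{m_i}+t-1)},
\]
so
\[
\lambda(G_{i+1})-\lambda(S_{t-1,m_{i+1}})\ \ge\ \lambda(G_i)-\lambda(S_{t-1,m_i})+d_i\Bigl(\frac{1}{2(t-1)(\sqrt{m_i}+t-1)}-\frac{1}{8t\sqrt{m_i}}\Bigr).
\]
Here I used \(2\eta=1/(8t)\). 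The bracket is nonnegative as soon as \(8t\sqrt{m_i}\ge 2(t-1)(\sqrt{m_i}+t-1)\), which holds for all \(m_i\ge1\). Hence the gap never shrinks, and the final \(H\) satisfies \(\lambda(H)>\lambda(S_{t-1,m'})\).

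The only delicate step is matching constants between the pruning loss \(2\eta/\sqrt{m_i}\) and the split increment \(1/(2(t-1)(\sqrt{m_i}+t-1))\). The choice \(\eta=1/(16t)\) makes this comparison hold with room to spare. The other thing to handle correctly is that the threshold must be re-normalized by the current edge count \(m_i\), not by \(m\), which is why the procedure is iterated rather than done in one pass.
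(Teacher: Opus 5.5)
Your proof is correct and is essentially the paper's argument: prune light edges relative to the current Perron vector and current edge count $m_i$, bound the loss by the Rayleigh quotient with the old vector, telescope against $\sqrt{m_i}-\sqrt{m_{i+1}}$, and use Lemma~\ref{lem:Skm-increment} to show the split gap never shrinks. The only difference is that you delete all light edges in each round rather than one at a time, which you handle correctly through the integral comparison and the multi-step form of Lemma~\ref{lem:Skm-increment}.

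Two side remarks are inaccurate but do not affect the proof. First, the bracket $\frac{1}{2(t-1)(\sqrt{m_i}+t-1)}-\frac{1}{8t\sqrt{m_i}}$ is nonnegative only when $\sqrt{m_i}\ge (t-1)^2/(3t+1)$, so it fails at $m_i=1$ once $t\ge 6$; your bound $m_i\ge(1-4\eta)^2m/2$ covers this for large $m$. Second, $S_{t-1,m}$ contains no star with $m-\binom{t-1}{2}$ edges when $t\ge3$, so $\lambda(S_{t-1,m})\ge\sqrt m$ should rest on Lemma~\ref{lem:Skm-asymptotic-lambda} alone.
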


\begin{proof} 
We construct a sequence of graphs $G:=G_0\supseteq G_1\supseteq \cdots \supseteq G_\ell :=H$ iteratively as follows.
Given $G_i$, let $\bm{x}^{(i)}$ be a unit Perron eigenvector and write $m_i:=e(G_i)$.
If there is an edge $u_iv_i\in E(G_i)$ with $x^{(i)}_{u_i}x^{(i)}_{v_i}<\frac{\eta}{\sqrt{m_i}},$
delete one such edge to obtain $G_{i+1}$; otherwise, stop the deletion. 
In Claim~\ref{cl:H-linearly-size} below, we show that this process terminates at a subgraph \(H\) with linearly many edges.  
For each $0\le i\le \ell$, 
we write $\lambda_i:=\lambda(G_i)$, $m:=m_0=e(G)$, and $m':=m_\ell=e(H)$.
Each deletion step satisfies $m_{i+1}=m_i-1$. 
Let $\bm{x}:=\bm{x}^{(\ell)}$ be a unit Perron eigenvector of~$H$. 
At the final step $\ell$, by definition there is no edge $uv\in E(G_\ell)$ with
$x^{(\ell)}_ux^{(\ell)}_v<\frac{\eta}{\sqrt{m_\ell}}.$
Thus, every edge $uv\in E(H)$ satisfies $x_u x_v\ge \eta/\sqrt{m'}$.

\begin{claim}\label{clm:pruning-one-step-rayleigh}
For each $0\le i\le \ell-1$, one has $\lambda_{i+1} \ge \lambda_i-\frac{2\eta}{\sqrt{m_i}}.$
\end{claim}

\begin{poc} 
For each $i$, write $G_{i+1}=G_i-u_iv_i$ and set $A_i:=A(G_i)$ and $A_{i+1}:=A(G_{i+1})$.
Since $\bm{x}^{(i)}$ is a unit Perron eigenvector of $A_i$ corresponding to \(\lambda_i\), we have $\lambda_i=(\bm{x}^{(i)})^{\mathsf T}A_i \bm{x}^{(i)}.$
By the Rayleigh quotient applied to $A_{i+1}$, we get 
\[
  \lambda_{i+1}
  \ \ge\
  (\bm{x}^{(i)})^{\mathsf T}A_{i+1}\bm{x}^{(i)}
  =
  (\bm{x}^{(i)})^{\mathsf T}A_i \bm{x}^{(i)}-2x^{(i)}_{u_i}x^{(i)}_{v_i}
  =
  \lambda_i-2x^{(i)}_{u_i}x^{(i)}_{v_i}
  \ \ge\ \lambda_i-\frac{2\eta}{\sqrt{m_i}}. 
  \qedhere 
\]
\end{poc}
 
Now we show that edge loss in the deletion process forces a multiplicative spectral gap.

\begin{claim} \label{claim:multiplicative-gap}
  We have
$\frac{\lambda(H)}{\sqrt{m'}}
        \ge
        1+ (1- 4\eta) (\alpha^{-1/2} -1).$
\end{claim}

\begin{poc}
By Lemma~\ref{lem:Skm-asymptotic-lambda} with \(k=t-1\), $\lambda(S_{t-1,m})=\sqrt m+\frac{t-2}{2}+O_t(m^{-1/2}),$
 so $\lambda(S_{t-1,m})\ge \sqrt m$
for all sufficiently large~$m$.
Increasing the lower bound on \(m\) if necessary, the hypothesis
\(\lambda(G)>\lambda(S_{t-1,m})\) therefore implies \(\lambda(G)\ge \sqrt m\).
For each deletion step, we have $m_{i+1}=m_i-1$ and 
\[
  \sqrt{m_i}-\sqrt{m_{i+1}}
  =
  \frac{1}{\sqrt{m_i}+\sqrt{m_{i+1}}}
  \ \ge\ \frac{1}{2\sqrt{m_i}},
  \qquad\text{i.e.}\qquad
  \frac{1}{\sqrt{m_i}}\ \le\ 2(\sqrt{m_i}-\sqrt{m_{i+1}}).
\]
Combining with Claim~\ref{clm:pruning-one-step-rayleigh} gives
\[
  \lambda_{i+1}-\sqrt{m_{i+1}}
  \ \ge\
  (\lambda_i-\sqrt{m_i})+(1-4\eta)\bigl(\sqrt{m_i}-\sqrt{m_{i+1}}\bigr).
\]
Summing over $i=0,1,\dots,\ell-1$ and telescoping yields
\[
  \lambda(H)-\sqrt{m'}
  \ \ge\
  \lambda(G)-\sqrt m\;+\;(1-4\eta)\bigl(\sqrt m-\sqrt{m'}\bigr).
\]
This also shows that \(m'>0\). Indeed, if \(m'=0\), then \(H\) is edgeless, so \(\lambda(H)=0\), whereas the right-hand side above is at least \((1-4\eta)\sqrt m>0\), a contradiction.
Using $\lambda(G)\ge \sqrt m$ and dividing by $\sqrt{m'}$, we obtain
\[
  \frac{\lambda(H)}{\sqrt{m'}}
  \ \ge\
  1+(1-4\eta)\left(\frac{\sqrt m}{\sqrt{m'}}-1\right)
  =
  1+(1-4\eta)\left(\frac{1}{\sqrt\alpha}-1\right). \qedhere 
\]
\end{poc}

Next, we give a lower bound on the number of remaining edges of $H$.

\begin{claim} \label{cl:H-linearly-size}
The resulting graph $H$ contains $m' \ge c_{\eta}m$ edges, where        $c_{\eta}:=\left(\frac{1-4\eta}{\sqrt{2} - 4\eta}\right)^2>0.$
\end{claim}

\begin{poc}
Since $\lambda(H)\le \sqrt{2m'}$,
Claim~\ref{claim:multiplicative-gap} gives  
$1 + (1- 4\eta ) (\alpha^{-1/2} - 1) \le \sqrt{2}.$
Thus $\alpha=\frac{m'}{m}\ge c_{\eta}.$
\end{poc}
 
It remains to show that the pruning preserves the split gap, that is, $\lambda(H)>\lambda(S_{t-1,m'}).$
If \(\ell=0\), this is immediate from the hypothesis. Hence assume \(\ell\ge1\).
Define
\[
        \Delta_i := \lambda_i- \lambda(S_{t-1,m_i})
\]
for every $0\le i\le \ell$. 
In particular, $\Delta_0=\lambda(G)- \lambda(S_{t-1,m})>0$ by assumption. 
To prove $\lambda(H) > \lambda (S_{t-1,m'})$, it suffices to show that $\Delta_{i+1} \ge \Delta_i$ for every \(0\le i\le \ell-1\).

Since $\eta= \frac{1}{16t}$, we can choose $M=M(t,\eta)$ such that for all \(s\ge M\),
\[
\frac{1}{2(t-1)(\sqrt s+t-1)} > \frac{2\eta}{\sqrt s}.
\]
Then for all sufficiently large $m$, the lower bound $m'\ge c_\eta m$ in Claim~\ref{cl:H-linearly-size} implies $m_i\ge m'\ge M$ for each $0\le i\le \ell-1$. 
By Claim~\ref{clm:pruning-one-step-rayleigh}, $\lambda_{i+1} - \lambda_i \ge -\frac{2\eta}{\sqrt{m_i}}.$
By Lemma~\ref{lem:Skm-increment}, applied with \(k=t-1\), we get
\[
  \lambda(S_{t-1,m_i})-
  \lambda(S_{t-1,m_i-1})
  \ge
  \frac{1}{2(t-1)(\sqrt{m_i}+t-1)}.
\]
Combining the above inequalities,
\[
  \Delta_{i+1} - \Delta_i
  =
  \lambda_{i+1} - \lambda_i
  +
  \lambda(S_{t-1,m_i})
  -
  \lambda(S_{t-1,m_i-1})
  >0.
\]
Hence $\Delta_{i+1}> \Delta_i$ for all \(0\le i\le \ell-1\). Therefore $\Delta_\ell>\Delta_{\ell-1}>\cdots>\Delta_0>0,$
and so $\lambda(H)>\lambda(S_{t-1,m'}).$
This completes the proof of Lemma~\ref{lem:lemmaA-pruning}.
\end{proof}

\section{Proof of Theorem \ref{thm:sharp-Ktt}} 

\label{sec-four-key-lemmas}

First, we show that the constant $B_t= 2^{-(t-1)^2}/(t!)^2$ in Theorem \ref{thm:sharp-Ktt} is best possible. 

\begin{example} \label{exam:sharpness-Bt}
For every integer $t\ge 2$, 
   the random graphs $G(n,m)$ on $n\approx 2\sqrt{m} - t$ vertices with $m$ edges achieve the sharp constant $B_t$ in Theorem \ref{thm:sharp-Ktt}. 
\end{example}

\begin{proof} 
Fix $t\ge 2$ and let $m\to\infty$.
Let $n:=\lfloor 2\sqrt m-t\rfloor$ and $N:=\binom n2$.
For $m$ sufficiently large, one has $m\le N$, so the random graph model $G(n,m)$ is well-defined\footnote{The model $G(n,m)$ selects uniformly at random a graph with $n$ vertices and $m$ edges from all $\binom{N}{m}$ such graphs.}.
Every $m$-edge graph $G$ on $n$ vertices satisfies $\lambda(G)\ge \frac{2m}{n}$. Note that  
\[
  \frac{2m}{n}
  \ \ge\
  \frac{2m}{2\sqrt m-t}
  \ =\
  \sqrt m+\frac{t}{2}+\frac{t^2}{4\sqrt m-2t}
  \ > \
  \sqrt m+\frac{t-2}{2}+1.
\]
Since $\lambda(S_{t-1,m})=\sqrt m+\frac{t-2}{2}+O_t(m^{-1/2})$ by Lemma \ref{lem:Skm-asymptotic-lambda}, this implies $\lambda(G)>\lambda(S_{t-1,m})$ for all sufficiently
large $m$.
If $G$ is a random graph in the model $G(n,m)$, then the expectation 
\[
\mathbb E[\texttt{\#}\Ktt(G)]
  = 
  \frac12\binom nt\binom{n-t}{t}
  \cdot
  {\binom{N-t^2}{m-t^2}}\bigg/ {\binom{N}{m}}. 
\]
Using $\binom nt\binom{n-t}{t}=\frac{n^{2t}}{(t!)^2}\left(1+O_t(n^{-1})\right)$ and 
\[
  {\binom{N-t^2}{m-t^2}} \bigg/ {\binom{N}{m}}
  \;=\;
  \prod_{i=0}^{t^2-1}\frac{m-i}{N-i}
  \;=\;
  \left(\frac{m}{N}\right)^{t^2}\left(1+O_t(m^{-1})\right), 
\]
which together with $n=2\sqrt m+O_t(1)$ and
$N=2m+O_t(\sqrt m)$ yields 
\[
 \mathbb E[\texttt{\#}\Ktt(G)]
  =
  \left( \frac{2^{-(t-1)^2}}{(t!)^2}+O_t(m^{-1/2})\right)m^t. 
\]  
Hence, for every $\varepsilon>0$ and all sufficiently large $m$, there exists an $m$-edge graph $G$ with
$\lambda(G)>\lambda(S_{t-1,m})$, while $\texttt{\#}\Ktt(G)\le (B_t+\varepsilon)m^t$. This shows the tightness of $B_t$. 
\end{proof}

We point out that for every $t\ge 3$, the graphs near the split graph $S_{t,m}$ are not sharp for Theorem~\ref{thm:sharp-Ktt}.
This is different from the case $t=2$ in Theorem \ref{thm:counting-C4}.

\subsection{Delocalized case}

\begin{lemma} \label{lem:lemmaB-delocalized}
Fix \(t\ge2\) and $\eta\in(0,\tfrac14)$. Let $H$ be an $m$-edge graph with unit Perron eigenvector \(\bm{x}\). Denote
$g:=\|\bm{x} \|_\infty m^{1/4}$. 
If \(H\) satisfies the \(\eta\)-heavy-edge condition and \(g=O(1)\), then
\[
 \texttt{\#}\Ktt(H)\ge \bigl(B_t-o(1)\bigr) \Big(\frac{\lambda^2(H)}{m}\Big)^{t(t-1)}m^t.
\]
\end{lemma}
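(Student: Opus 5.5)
The plan is to apply Lemma~\ref{lem:spectral-count-Ktt}, not to $H$ itself, but to the graph $H'$ obtained by deleting isolated vertices. Deleting isolated vertices changes neither $e(H)=m$, nor $\lambda(H)$, nor $\texttt{\#}\Ktt(H)$. Writing $n'=|V(H')|$, Lemma~\ref{lem:spectral-count-Ktt} gives
\[
\texttt{\#}\Ktt(H)\ge B_t\Big(\frac{\lambda^2(H)}{m}\Big)^{t(t-1)}m^t-O_t\big((n')^{2t-1}\big).
\]
So it suffices to show two things: that $n'=O(\sqrt m)$, and that the main term is of order $m^t$. Then the error term is $O(m^{t-1/2})$, which is $o(1)$ times the main term.

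\textbf{Step 1: bounding $n'$.} Each non-isolated vertex $u$ lies on some edge $uv$. By the $\eta$-heavy-edge condition,
\[
x_u\ge \frac{\eta}{\sqrt m\,x_v}\ge \frac{\eta}{\sqrt m\,\|\bm x\|_\infty}=\frac{\eta}{g\,m^{1/4}}.
\]
Squaring gives $x_u^2\ge \eta^2/(g^2\sqrt m)$. Summing over the non-isolated vertices and using $\|\bm x\|_2=1$ yields $n'\le g^2\sqrt m/\eta^2=O(\sqrt m)$, since $g=O(1)$ and $\eta$ is fixed.

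\textbf{Step 2: the main term is of order $m^t$.} We need $\lambda(H)^2/m$ bounded below by a positive constant. The Rayleigh bound $\lambda(H)\ge 2m/n'$ together with Step 1 gives $\lambda(H)\ge 2\eta^2\sqrt m/g^2$, so $\lambda^2/m\ge 4\eta^4/g^4=\Omega(1)$. Alternatively, the heavy-edge condition gives directly
\[
\lambda=\sum_{uv\in E}2x_ux_v\ge 2\eta\sqrt m .
\]
Hence the main term is at least $c\,m^t$ with $c=c(t,\eta,g)>0$. The error $O_t((n')^{2t-1})=O(m^{t-1/2})$ is therefore $o(1)$ times the main term, which is exactly the claimed bound $(B_t-o(1))(\lambda^2/m)^{t(t-1)}m^t$.

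The only delicate point is Step 1. It requires the heavy-edge condition to hold with respect to the same unit Perron vector $\bm x$ used to define $g$, and it uses that every non-isolated vertex has a heavy edge. Both are given by the hypotheses, so no step here is a genuine obstacle. The $o(1)$ depends on the implied constant in $g=O(1)$ and on $\eta$.
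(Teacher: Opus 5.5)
Your proposal is correct and follows essentially the same route as the paper's proof. Like the paper, you delete isolated vertices, use the heavy-edge condition together with $\|\bm{x}\|_2=1$ to get $|V(H)|=O(\sqrt m)$, and use $\lambda=2\sum_{uv}x_ux_v\ge 2\eta\sqrt m$ to show the main term of Lemma~\ref{lem:spectral-count-Ktt} is $\Omega(m^t)$, which absorbs the $O_t(n^{2t-1})$ error.
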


In the proof of Lemma \ref{lem:lemmaB-delocalized}, we use a trick of the authors \cite{LiLiuZhang2025EdgeSpectralESS}. 
Our goal is to show that $H$ is dense, i.e.~supported on \(O(\sqrt m)\) vertices, so the term $O_t(n^{2t-1})$ in the bound of Lemma \ref{lem:spectral-count-Ktt} is negligible. 

\begin{proof} 
Fix $t\ge 2$ and $\eta\in(0,\tfrac14)$. 
We denote $\lambda : =\lambda (H)$. 
Let $H$ be an $m$-edge graph with unit Perron eigenvector $\bm{x}$ satisfying the $\eta$-heavy-edge condition. 
Delete all isolated vertices of \(H\), if any. This does not change \(m\), \(\lambda(H)\), \(\texttt{\#}\Ktt(H)\), the heavy-edge condition, or the value of \(g\). Thus we may assume that \(H\) has no isolated vertices. Let \(n:=|V(H)|\).
Assume that $g=\|\bm{x}\|_\infty m^{1/4}=O(1)$, and fix $G_0>0$ such that $g\le G_0$ for all sufficiently large $m$.
Then $\| \bm{x}\|_\infty\le G_0 m^{-1/4}$. 
For any vertex $v\in V (H)$, choose a neighbor $u\in N(v)$; then by the heavy-edge condition,
\[
  x_v\ \ge\ \frac{\eta}{x_u\sqrt m}\ \ge\ \frac{\eta}{\| \bm{x}\|_\infty\sqrt m}\ \ge\ \frac{\eta}{G_0}m^{-1/4}.
\]
Using $\| \bm{x}\|_2=1$ gives  $1 \ge n \cdot {\eta}^2{G_0}^{-2} m^{-1/2},$ so $
 n \le {G_0^2}{\eta^{-2}}\sqrt m.$
Lemma~\ref{lem:spectral-count-Ktt} yields
\[
  \texttt{\#}\Ktt(H)
  \ \ge\
  B_t\left(\frac{\lambda^2}{m}\right)^{t(t-1)}m^t
  \ -\ O_t\!\left(n ^{2t-1}\right).
\]
Since $n=O(\sqrt m)$, the error term $O_t(n^{2t-1})= o(m^t)$.
Also the $\eta$-heavy-edge condition implies
\[ 
  \lambda =\bm{x}^{\mathsf T}A(H) \bm{x}
  =2\sum_{uv\in E(H)}x_u x_v
  \ge 2m\cdot \frac{\eta}{\sqrt m}
  =2\eta\sqrt m.
\] 
So the leading term is $\left(\frac{\lambda^2}{m}\right)^{t(t-1)}m^t=\Omega_{t,\eta}(m^t).$
Thus, the error term \(O_t(n^{2t-1})=o(m^t)\) can be absorbed into the $(B_t-o(1))$
coefficient. 
This gives the desired bound.
\end{proof}

\subsection{A structural result for the localized case}

In this section, we first establish a structural result. Roughly, it says that if $H$ is an $m$-edge graph satisfying both the $\eta$-heavy-edge condition and $g:=\|\bm{x}\|_{\infty} m^{1/4} \to \infty$,
then $H$ admits a useful vertex partition. 
To be more precise, we introduce the following definition. 

\begin{definition} \label{def:ACD-partition} 
We say that a partition $V(H)=A\sqcup C\sqcup D$ satisfies conditions (T1)--(T3) if \textbf{(T1)} $D$ is independent;
\textbf{(T2)} $E_H(C,D)=\varnothing$;
\textbf{(T3)} every edge of $H$ lies either in $E(H[C])$ or is incident to~$A$. 
We say that such a partition is an \emph{ACD partition}, and we call \(C\) the \emph{core}. 
\end{definition}

We point out that the ACD partition is quite different from the traditional bipartition obtained from the edge-spectral stability theorem recently established by the authors \cite{LiLiuZhang2025EdgeSpectralESS}. 

In the localized case, we first prove the following structural result. 

\begin{theorem} 
\label{thm:localized-structure}
Let $H$ be an $m$-edge graph with unit Perron eigenvector $\bm{x}$. 
Assume that $H$ satisfies the $\eta$-heavy-edge condition for some fixed $\eta\in(0,\tfrac14)$ and $g=\|\bm{x} \|_\infty m^{1/4} \to\infty$. 
Then $H$ admits an ACD partition $V(H)=A\sqcup C\sqcup D$ such that 
\[
 e_H(A,C)=o(m), \quad 
|A|=o(\sqrt{m}), \quad 
|C| \le \sqrt{m}\cdot m^{o(1)}. 
\] 
\end{theorem}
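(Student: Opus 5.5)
The plan is to build the partition from a single Perron-vector threshold $\tau m^{-1/4}$, chosen by pigeonhole over dyadic scales. Conditions (T1)--(T3) then hold automatically. All the work goes into choosing $\tau$ so that $e_H(A,C)=o(m)$, while keeping $\tau\to\infty$ and $\tau=m^{o(1)}$.

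\textbf{Construction.} For a parameter $\tau\ge 1$, put $A_\tau:=\{v: x_v\ge \tau m^{-1/4}\}$ and $B_\tau:=V(H)\setminus A_\tau$. Let $D_\tau$ be the set of vertices of $B_\tau$ having no neighbour in $B_\tau$, and set $C_\tau:=B_\tau\setminus D_\tau$.
\begin{itemize}
\item (T1) and (T2) hold because no vertex of $D_\tau$ has a neighbour in $B_\tau=C_\tau\sqcup D_\tau$.
\item (T3) holds because every edge not incident to $A_\tau$ lies inside $B_\tau$, and hence inside $C_\tau$.
\end{itemize}

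\textbf{Size bounds.} Since $\|\bm x\|_2=1$, we get $|A_\tau|\le \tau^{-2}\sqrt m$, which is $o(\sqrt m)$ as soon as $\tau\to\infty$. Every $c\in C_\tau$ has a neighbour $u\in B_\tau$, so $x_u<\tau m^{-1/4}$. The $\eta$-heavy-edge condition then gives $x_c\ge \eta/(x_u\sqrt m)>(\eta/\tau)m^{-1/4}$. Hence $|C_\tau|\le \tau^2\eta^{-2}\sqrt m$, which is $\sqrt m\cdot m^{o(1)}$ provided $\tau=m^{o(1)}$.

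\textbf{Choosing $\tau$.} I would choose $\tau$ among $\tau_j:=2^j$ for $J\le j<2J$, where $J=J(m)\to\infty$ slowly enough that $4^{J}=m^{o(1)}$.
\begin{itemize}
\item \emph{Few heavy edges.} Fix $K=K(m)\to\infty$ slowly. Since $2\sum_{uv\in E(H)}x_ux_v=\lambda(H)\le\sqrt{2m}$, at most $\sqrt{m/2}\cdot\sqrt m/K\le m/K$ edges have $x_ux_v\ge K/\sqrt m$.
\item \emph{Each light edge is cut by few thresholds.} Take an edge $ac$ with $x_ax_c<K/\sqrt m$. Suppose it lands in $E(A_{\tau_j},C_{\tau_j})$. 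Then $x_a\ge \tau_jm^{-1/4}$ and, as above, $x_c>(\eta/\tau_j)m^{-1/4}$. The first inequality forces $\tau_j\le x_a m^{1/4}<K/(x_cm^{1/4})$. The second forces $\tau_j>\eta/(x_cm^{1/4})$.
\item So $\tau_j$ lies in an interval of ratio $K/\eta$, and the edge is cut by at most $\log_2(K/\eta)+1$ values of $j$.
\end{itemize}
Summing over $j$ gives
\[\sum_{J\le j<2J} e_H(A_{\tau_j},C_{\tau_j})\le J\cdot\frac mK+m\bigl(\log_2(K/\eta)+1\bigr).\]
Some $j$ therefore satisfies $e_H(A_{\tau_j},C_{\tau_j})\le m/K+m(\log_2(K/\eta)+1)/J$. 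Taking $K\to\infty$ with $\log K=o(J)$, this is $o(m)$. With that $j$ and $\tau=\tau_j\in[2^J,4^J)$, all three required bounds hold simultaneously.

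\textbf{Main obstacle.} The delicate step is the pigeonhole: a single threshold can cut many edges between the large-coordinate and small-coordinate vertices. The argument works only because two facts together confine the relevant thresholds of each light edge to a window of bounded ratio. These are the heavy-edge lower bound on $x_c$ for core vertices, and the Rayleigh-quotient bound on the number of edges with large product. The hypothesis $g\to\infty$ is not needed for this construction as far as I can see. At most it ensures that $A$ is nonempty, so that the partition is non-degenerate, which matters for the later lemmas rather than for the statement itself.
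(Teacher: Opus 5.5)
Your proof is correct, and its core mechanism is the paper's. You pick a dyadic Perron threshold by pigeonhole, use $\sum_{uv\in E(H)}x_ux_v=\lambda(H)/2\le\sqrt{m/2}$ to dispose of the few edges with large Perron product, and note that every other edge is cut by only $O(\log K)$ candidate thresholds.

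The implementations differ in two ways. First, the paper uses levels $\theta_h=2^{-h}\|\bm x\|_\infty$ and two thresholds $s>r$ with $\eta/(4\sqrt m)<sr<\eta/\sqrt m$, setting $D=\{x_v\le r\}$. Then (T1)--(T3) come from the heavy-edge condition and $|C|\le r^{-2}$. It bounds $e_H(A,C)$ by splitting $A$ into $A_{\mathrm{mid}}$, whose edges to $C$ lie in a block of $\ell=\lceil\log_2K\rceil$ consecutive cut sets $F_i$, and $A_{\mathrm{very}}$, handled by the Rayleigh quotient. Your combinatorial $D$ (vertices of $B_\tau$ with no neighbour in $B_\tau$) makes (T1)--(T3) automatic and uses the heavy-edge condition only to bound $|C|$. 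Your core implicitly sits above the lower threshold $\eta/(\tau m^{1/4})$, so the two partitions are essentially the same. Your direct double count over (edge, threshold) pairs replaces the $A_{\mathrm{mid}}/A_{\mathrm{very}}$ bookkeeping.

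Second, the paper's number of levels $K\approx 2\log_2(g/\sqrt\eta)$, and hence its window $I$ of roughly $\sqrt K/2$ candidate scales, is tied to $g$. Having $K\to\infty$ is exactly what the hypothesis $g\to\infty$ is used for. Anchoring your scales at $m^{-1/4}$ with a freely chosen slowly growing $J(m)$ decouples the window from $g$. So you are right that the stated conclusion holds without that hypothesis; for bounded $g$ your $A$ is eventually empty. The paper's normalization by $\|\bm x\|_\infty$ fits its localized/delocalized case split, but it is not needed for the statement itself.
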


\begin{proof}  
Write $L:=\|\bm{x}\|_\infty$ and $g:=Lm^{1/4}$.
Assume $g=g(m)\to\infty$ as $m\to\infty$ and the $\eta$-heavy-edge condition
$ x_u x_v\ \ge\ \frac{\eta}{\sqrt m}$ for every edge $uv\in E(H)$.  
We begin by producing an ACD partition. 
Let $\widetilde g:=g/\sqrt\eta$ and $ K:=\left\lceil 2\log_2 \widetilde g\right\rceil+1$. 
For \(0\le h\le K\), set \(\theta_h:=2^{-h}L\).
Let
\[
  I:=\Bigl\{\lfloor K/2\rfloor-\lfloor\sqrt K\rfloor,\ \dots,\ \lfloor K/2\rfloor-\lceil\sqrt K/2\rceil\Bigr\}.
\]
For $i\in\{1,2,\dots,\lfloor K/2\rfloor\}$, we define the intermediate level sets
\[
  C_i:=\{v:\ \theta_i\ge x_v>\theta_{K-i}\},
  \qquad
  B_i:=\{v:\ \theta_{i-1}\ge x_v>\theta_i\},
  \qquad
  F_i:=E_H(C_i,B_i).
\]
The sets $B_1,\dots,B_{\lfloor K/2\rfloor}$ are pairwise disjoint, so the edge sets $F_i$ are disjoint and hence
$\sum_{i=1}^{\lfloor K/2\rfloor}|F_i|\le m$. 
Let $\ell:=\lceil \log_2 K\rceil$.
For each integer $i\in I$ with $i\ge \ell$, we define $S_i:=\sum_{j=0}^{\ell-1}|F_{i-j}|$.
Note that 
\[
 \sum_{{i\in I, i\ge \ell}} S_i \le \ell m,
\]
since each $|F_j|$ appears in at most $\ell$ different sums $S_i$. 
Since $g\to\infty$, we have $\widetilde{g} \to \infty$ and $K\to\infty$. Then $\ell=o(\sqrt K)=o(|I|)$ as $|I| \approx \sqrt{K}/2$. 
For all sufficiently large \(m\), every \(i\in I\) satisfies
$i\ge \lfloor K/2\rfloor-\lfloor\sqrt K\rfloor \ge \ell,$
so the quantities \(S_i\) are defined for all \(i\in I\).
By the pigeonhole principle, 
there exists $i_\star\in I$ with
\[
 S_{i_\star} \le \frac{1}{|I|} \sum_{i\in I} S_i \le \frac{\ell m}{|I|}= o(m).
\]
Denote $s:=\theta_{i_\star}$ and $r:=\theta_{K-i_\star}$. Then $s>r >0$. We define $A,C,D$ as follows:
\[
  A:=\{v \in V(H):\ x_v>s\},\quad
  C:=\{v\in V(H):\ s\ge x_v>r\},\quad
  D:=\{v\in V(H):\ x_v\le r\}.
\] 

\begin{claim}
We have \(e_H(A,C)=o(m)\).
\end{claim}

\begin{poc}
To begin with, we partition $A=A_{\mathrm{mid}}\sqcup A_{\mathrm{very}}$, where 
\[
  A_{\mathrm{mid}}:=\{v:\ s<x_v<2^\ell s\},
  \qquad
  A_{\mathrm{very}}:=\{v:\ x_v\ge 2^\ell s\}. 
\]
Since $2^\ell s=\theta_{i_\star-\ell}$, we have
$A_{\mathrm{mid}}\subseteq B_{i_\star}\sqcup\cdots\sqcup B_{i_\star-\ell+1}$.
Also $C=C_{i_\star}\subseteq C_{i_\star-j}$ for $0\le j\le \ell-1$, so
$E_H(C,B_{i_\star-j})\subseteq E_H(C_{i_\star-j},B_{i_\star-j})=F_{i_\star-j}$.
Therefore
\[
  e_H(A_{\mathrm{mid}},C)\ \le\ \sum_{j=0}^{\ell-1}|F_{i_\star-j}|\ =\ S_{i_\star}\ =\ o(m).
\] 
For $u\in C$ and $v\in A_{\mathrm{very}}$ one has $x_u>r$ and $x_v\ge 2^\ell s$, hence $x_u x_v\ge 2^\ell sr$.
Since $sr=\theta_{i_\star}\theta_{K-i_\star}=2^{-K}L^2$ and $2^K<4\widetilde g^{\,2}$, we have
$sr>\eta/(4\sqrt m)$.
Thus each edge $uv\in E_H(A_{\mathrm{very}},C)$ satisfies $x_u x_v>2^\ell\eta/(4\sqrt m)$, and so
\[
  e_H(A_{\mathrm{very}},C)\cdot 2^\ell\frac{\eta}{4\sqrt m}
  \ <\
  \sum_{ab\in E(H)}x_ax_b
  \ =\ \frac{1}{2}\lambda(H) \le \frac{1}{2}\sqrt{2m}.
\]
Using $2^\ell\ge K\to\infty$ gives
$e_H(A_{\mathrm{very}},C)=o(m)$. 
So we conclude that $e_H(A,C)=o(m)$.
\end{poc}

\begin{claim}
    The partition $V(H)=A\sqcup C\sqcup D$ satisfies \textup{(T1)}--\textup{(T3)} of Definition \ref{def:ACD-partition}. 
\end{claim}

\begin{poc}
We first show that $sr<\eta/\sqrt m$.
Indeed, since $2^K\ge 2\widetilde g^{\,2}$, we have
\[
  sr
  =
  2^{-K}L^2
  \le
  \frac{L^2}{2\widetilde g^{\,2}}
  =
  \frac{L^2}{2(g^2/\eta)}
  =
  \frac{\eta}{2\sqrt m}
  <\ \frac{\eta}{\sqrt m}.
\]
If $u\in C\cup D$ and $v\in D$, then $x_u\le s$ and $x_v\le r$, so $x_ux_v\le sr< \frac{\eta}{\sqrt m}$.
By the heavy-edge assumption, there are no $C$--$D$ edges and $D$ is independent, proving \textup{(T1)}--\textup{(T2)}. 
Now, let $uv\in E(H)$ be an edge not incident to $A$.
Then $x_u,x_v\le s$, so neither endpoint can lie in $D$, since otherwise \(x_ux_v<\eta/\sqrt m\), and hence $u,v\in C$.
Thus every edge not incident to $A$ lies in $H[C]$, which is exactly \textup{(T3)}.
\end{poc}

\begin{claim}
    We have $|A|=o(\sqrt m)$ and 
  $|C|\le \sqrt m\cdot m^{o(1)}$. 
\end{claim}

\begin{poc}
Since $\|\bm{x}\|_2=1$ and every vertex in $A$ satisfies $x_v>s$, we have $|A|s^2<1$ and hence $|A|<1/s^2$.
Write $j:=\lfloor K/2\rfloor-i_\star$.
As $i_\star\in I$, we have $j\ge \lceil \sqrt K/2\rceil\to\infty$ and also $j=O(\sqrt K)$.
Moreover $2i_\star\le K-2j$, so
\[
  \frac{1}{s^2}=\frac{2^{2i_\star}}{L^2}\ \le\ \frac{2^K}{L^2}\,2^{-2j}.
\]
Since $K\le 2\log_2 \widetilde g+2$, we have $2^K\le 4\widetilde g^{\,2}$ and therefore
\[
  \frac{2^K}{L^2}\ \le\ \frac{4\widetilde g^{\,2}}{L^2}
  =\frac{4(g^2/\eta)}{L^2}
  =\frac{4\sqrt m}{\eta}.
\]
Combining these bounds gives $|A|\le (4/\eta)\,2^{-2j}\sqrt m=o(\sqrt m)$.

Similarly, every vertex in $C$ satisfies $x_v>r$, so $|C|r^2\le 1$ and hence $|C|\le 1/r^2$.
Since $r=\theta_{K-i_\star}=2^{-(K-i_\star)}L$ and $K-i_\star\le \lceil K/2\rceil+j$, we obtain
\[
  \frac{1}{r^2}=\frac{2^{2(K-i_\star)}}{L^2}\ \le\ \frac{2^{K+2j+2}}{L^2}
  \ \ll_\eta\ 2^{2j}\sqrt m.
\]
As $j=O(\sqrt K)$ and $K=O(\log \widetilde g)=O(\log m)$, we have $2^{2j}=m^{o(1)}$, and hence $|C|\le \sqrt m\cdot m^{o(1)}.$
\end{poc}

The three claims complete the proof.
\end{proof}

\subsection{Localized case with a dense core}

The following lemma deals with the localized case with a dense core.

\begin{lemma} 
\label{lem:lemmaC-localized-nonsparse}
Let $H$ be an $m$-edge graph with the unit Perron eigenvector \(\bm{x}\), and let  
$g:=\|\bm{x} \|_\infty m^{1/4}$.  
Assume that \(H\) satisfies the \(\eta\)-heavy-edge condition for some fixed \(\eta\in(0,\tfrac14)\) and that \(g\to\infty\). Assume in addition that \(\lambda(H)\ge\sqrt m\), and that the partition \(V(H)=A\sqcup C\sqcup D\) obtained from Theorem~\ref{thm:localized-structure} satisfies
$e(H[C])=(\alpha+o(1))m$ for some constant \(\alpha\in(0,1]\). Then
$\texttt{\#}\Ktt(H)\ge (B_t-o(1))m^t$.
\end{lemma}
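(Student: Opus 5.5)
The plan is to apply Lemma~\ref{lem:spectral-count-Ktt} to the core $H_C:=H[C]$ and then check that both the main term and the error term have the right size. Write $m_C:=e(H_C)=(\alpha+o(1))m$ and $\lambda_C:=\lambda(H_C)$. Since $\texttt{\#}\Ktt(H)\ge\texttt{\#}\Ktt(H_C)$, it suffices to prove
\[
\lambda_C^2\ge(1-o(1))m,
\qquad
|C|^{2t-1}=o(m^t).
\]
Given these, the main term in Lemma~\ref{lem:spectral-count-Ktt} is
\[
B_t(\lambda_C^2/m_C)^{t(t-1)}m_C^t\ge B_t\,\alpha^{-t(t-1)+t}(1-o(1))m^t\ge (B_t-o(1))m^t,
\]
because $\alpha\le1$ and $t(t-1)\ge t$.

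\textbf{The error term.} Theorem~\ref{thm:localized-structure} gives $|C|\le \sqrt m\, m^{o(1)}$, so $|C|^{2t-1}\le m^{t-1/2+o(1)}=o(m^t)$. I would double-check that the $m^{o(1)}$ there is genuinely subpolynomial. It is: it comes from $2^{2j}$ with $j=O(\sqrt K)$ and $K=O(\log m)$, so $2^{2j}=2^{O(\sqrt{\log m})}=m^{o(1)}$.

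\textbf{The spectral lower bound for the core.} This is the main step. I use Lemma~\ref{lem:spectral-vertex-cut} with $U=C$ and $W=A\cup D$. By (T2) the cut edges are exactly $E_H(A,C)$, so $m_{UW}=e_H(A,C)=o(m)$. Part (a) then gives
\[
\lambda(H)\le \max\{\lambda_C,\lambda(H[A\cup D])\}+o(\sqrt m).
\]
If the maximum is $\lambda_C$, we are done, since $\lambda(H)\ge\sqrt m$.

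The obstacle is the other case, where $H[A\cup D]$ carries the spectral radius. Here $H[A\cup D]$ has $(1-\alpha+o(1))m$ edges. By (T1), $D$ is independent, so this graph is $A$ together with edges from $A$ into $D$.

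My first attempt is to bound $\lambda(H[A\cup D])$ directly. Split it as $H[A]$ plus the bipartite graph $H[A,D]$. Since $|A|=o(\sqrt m)$, we get $\lambda(H[A])\le|A|=o(\sqrt m)$. The bipartite part satisfies $\lambda(H[A,D])\le\sqrt{e(A,D)}$. Hence
\[
\lambda(H[A\cup D])\le\sqrt{(1-\alpha+o(1))m}+o(\sqrt m).
\]
This does not beat $\sqrt m$ outright, but it does combine with part (b) of Lemma~\ref{lem:spectral-vertex-cut}:
\[
(\lambda-\lambda_C)(\lambda-\lambda(H[A\cup D]))\le o(m).
\]
Since $\alpha>0$ is a fixed constant, the second factor is at least $(1-\sqrt{1-\alpha}-o(1))\sqrt m=\Omega(\sqrt m)$. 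Therefore $\lambda-\lambda_C=o(\sqrt m)$, which gives $\lambda_C\ge(1-o(1))\sqrt m$ as required.

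This uses the positivity of $\alpha$ in an essential way. I expect the only delicate point to be the uniformity of these $o(\cdot)$ estimates, which I would handle by passing to subsequences.
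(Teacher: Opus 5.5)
Your proposal is correct and follows the paper's proof essentially step for step. The paper likewise bounds $\lambda(H[A\sqcup D])\le |A|+\sqrt{e(H[A\sqcup D])}$ via part (a) of Lemma~\ref{lem:spectral-vertex-cut}, and feeds this into part (b) for the cut $C\sqcup(A\sqcup D)$, whose crossing edges are only $A$--$C$ edges by (T2), to get $\lambda-\lambda_C=o(\sqrt m)$. It then applies Lemma~\ref{lem:spectral-count-Ktt} to $H[C]$, using $|C|\le\sqrt m\,m^{o(1)}$ and $t(2-t)\le 0$. Your preliminary case split via part (a) is unnecessary, since the part (b) argument covers both cases, but it does no harm.
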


\begin{proof}  
Set $B:=A\sqcup D$ and write 
$ \lambda:=\lambda(H)$, $ \lambda_C:=\lambda\bigl(H[C]\bigr)$ 
and $ \lambda_B:=\lambda\bigl(H[B]\bigr)$.  
Applying Lemma~\ref{lem:spectral-vertex-cut} \textup{(a)} to the graph $H[B]$ with the partition $B=A\sqcup D$ yields, since \(\lambda(H[D])=0\),
\begin{equation}\label{eq:lemC-lambdaB-upper}
  \lambda_B
  \ \le\
  \lambda\bigl(H[A]\bigr)+\sqrt{e_H(A,D)}
  \ \le\
  |A|+\sqrt{e\bigl(H[B]\bigr)}.
\end{equation} 
Next apply Lemma~\ref{lem:spectral-vertex-cut} \textup{(b)} to the partition $V(H)=C\sqcup B$.
Since \textup{(T2)} holds, the cut between $C$ and $B$ consists only of $A$--$C$ edges, so $e_H(C,B)=e_H(C,A)=o(m)$.
Therefore
\begin{equation}\label{eq:lemC-cut-forcing}
  (\lambda-\lambda_C)(\lambda-\lambda_B)\ \le\ e_H(C,B)\ =\ o(m).
\end{equation}

We now count copies of $\Ktt$ inside the induced subgraph  $H[C]$ by applying Lemma~\ref{lem:spectral-count-Ktt}. 
By \textup{(T1)}--\textup{(T3)},
\[
  m = e (H[C]) + e(H[B]) + e_H(A,C) .
\] 
Using the assumption $e(H[C])=(\alpha+o(1))m$, together with  $e_H(A,C)=o(m)$, we have $e(H[B])=(1-\alpha+o(1))m$. 
Substituting into \eqref{eq:lemC-lambdaB-upper} gives $\lambda_B\ \le\ \bigl(\sqrt{1-\alpha}+o(1)\bigr)\sqrt m,$
and therefore
\[
        \lambda-\lambda_B
        \ge 
        \bigl(1- \sqrt{1-\alpha} - o(1) \bigr) \sqrt{m}
        =
        \Omega(\sqrt m),
\]
as $\alpha >0$. 
Substituting into \eqref{eq:lemC-cut-forcing} yields $\lambda-\lambda_C=o(\sqrt m)$, and hence $\lambda_C\ge (1-o(1))\sqrt m$. 

Let \(m_C:=e(H[C])\).
Applying Lemma~\ref{lem:spectral-count-Ktt} to $H[C]$ gives
\[
  \texttt{\#}\Ktt(H)
  \ \ge\
  \texttt{\#}\Ktt(H[C])
  \ \ge\
  B_t\left(\frac{\lambda_C^2}{m_C}\right)^{t(t-1)}m_C^t\ -\ o(m_C^t)
  \ \ge \
  \bigl(B_t-o(1)\bigr)m^t,
\]
where the second inequality holds since $O_t(|C|^{2t-1})
        \le
        \left(\sqrt m\,m^{o(1)}\right)^{2t-1}
        =
        o(m^t).$
For the last inequality, use \(\lambda_C^2\ge (1-o(1))m\) and \(m_C=(\alpha+o(1))m\):
\[
  \left(\frac{\lambda_C^2}{m_C}\right)^{t(t-1)}m_C^t
  \ge
  (1-o(1))\,\alpha^{\,t-t(t-1)}m^t
  \ge
  (1-o(1))m^t,
\]
because \(\alpha\in(0,1]\) and \(t-t(t-1)=t(2-t)\le0\).
\end{proof}

\subsection{Localized case with a sparse core}

In this section, 
we consider the case where the induced subgraph $H[C]$ is sparse. 

\begin{lemma} \label{lem:lemmaD-localized-sparse} 
Let \(t\ge2\) and $H$ be a graph with $m\to \infty$ edges. 
Assume that $H$ admits a partition $V(H)=A\sqcup C\sqcup D$ satisfying conditions
\textup{(T1)}--\textup{(T3)}. Suppose moreover that
 $e_H(A,C)=o(m)$ and $|A|=o(\sqrt{m})$. 
 If in addition $\lambda(H)>\lambda(S_{t-1,m})$ and $e(H[C])=o(m)$,  then $\texttt{\#}\Ktt(H)\ge \bigl(B_t-o(1)\bigr) m^t$.
\end{lemma}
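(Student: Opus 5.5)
The plan follows the overview in the introduction. First we reduce to the bipartite graph $H[A,D]$. By (T1)--(T3), every edge of $H$ lies in $H[C]$, in $H[A]$, in $E_H(A,C)$, or in $E_H(A,D)$. We have $e(H[C])=o(m)$ and $e_H(A,C)=o(m)$ by hypothesis, and $e(H[A])\le\binom{|A|}{2}=o(m)$ because $|A|=o(\sqrt m)$. Hence $e_H(A,D)=(1-o(1))m$. Let $N\in\{0,1\}^{A\times D}$ be the incidence matrix. Applying Lemma~\ref{lem:spectral-vertex-cut}(a) twice (to $V=C\sqcup(A\cup D)$, then to $A\sqcup D$) gives
\[
\lambda(H)\le \max\{\lambda(H[C]),\,|A|+\|N\|\}+\sqrt{e_H(A,C)}.
\]
Since $\lambda(H[C])\le\sqrt{2e(H[C])}=o(\sqrt m)$ and $\lambda(H)>\sqrt m$, we get $\|N\|\ge(1-o(1))\sqrt m$. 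On the other hand $\|N\|_{\mathrm F}^2=e_H(A,D)\le m$. So $\sigma_1(N)^2\ge(1-o(1))\|N\|_{\mathrm F}^2$, and $N$ is nearly rank one in Frobenius norm.

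Next we prove a row-cover dichotomy (the promised Theorem~\ref{thm:sparseC-rowcover}). Let $\bm u\ge0$, $\bm w\ge0$ be the top singular vectors, so $\bm u^{\mathsf T}N\bm w\ge(1-o(1))\sqrt m$. The near-rank-one condition gives $\|N-\sigma_1\bm u\bm w^{\mathsf T}\|_{\mathrm F}^2=o(m)$. Since $N$ is $0/1$, up to $o(m)$ errors the rows of $A$ are close to $\sigma_1u_a\bm w$, and each row is close to the indicator of one common set $W\subseteq D$. This means $u_a$ is either near $0$ or near a common value.

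Let $A'$ be the set of rows substantially aligned with $W$, each with degree $\approx|W|$. Then $|A'|\cdot|W|\approx m$, and the heavy rows already give about $\binom{|A'|}{t}\binom{|W|}{t}$ copies of $K_{t,t}$. There are two cases.
\begin{itemize}
\item If $|A'|\ge t$: then $|A'|=o(\sqrt m)$ forces $|W|=\omega(\sqrt m)$. By AM--GM, $\binom{|A'|}{t}\binom{|W|}{t}$ with $|A'||W|\approx m$ and $|A'|\ge t$ is at least about $\frac{m^t}{(t!)^2}\ge B_t m^t$ once the counts are made robust. I would make them robust by applying Kővári--Sós--Turán-type counting (convexity over common neighbourhoods of $t$-subsets of $A'$) to the aligned submatrix. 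Since $B_t$ has the extra $2^{-(t-1)^2}$ factor, there is slack.
\item If $|A'|\le t-1$: all but $o(m)$ edges of $H$ are covered by at most $t-1$ vertices of $A$.
\end{itemize}

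The second case must contradict the spectral hypothesis. Delete the $d=o(m)$ edges not incident to $A'$ to get $H'$ with $m-d$ edges. Then $H'$ is a subgraph of $K_{t-1}\vee\overline{K}_{n}$, hence $K_{t,t}$-free. By Theorem~\ref{thm-Ktt-free}, $\lambda(H')\le\lambda(S_{t-1,m-d})$, and Lemma~\ref{lem:Skm-increment} gives $\lambda(S_{t-1,m-d})\le\lambda(S_{t-1,m})-\frac{d}{2(t-1)(\sqrt m+t-1)}$. To get a contradiction with $\lambda(H)>\lambda(S_{t-1,m})$, I must show that removing these $d$ edges costs less than $\frac{d}{2(t-1)(\sqrt m+t)}$ in spectral radius. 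Using the Perron vector $\bm x$ of $H$ as a test vector, the loss is $2\sum_{\text{deleted}}x_ux_v$, so I need the deleted edges to have small Perron weight. The deleted edges have $x_ux_v\ge\eta/\sqrt m$ only if the heavy-edge property is also assumed; here it is not. So instead I would bound the Perron mass of $V\setminus(A'\cup N(A'))$ via Lemma~\ref{lem:spectral-vertex-cut}(c), since $\lambda(H)$ is nearly all explained by the star-like part.

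This comparison is the main obstacle. The spectral gap over $\lambda(S_{t-1,m})$ is only of order $d/\sqrt m$, so the bound on the Perron weight of the exceptional edges must be sharp to constant factors. If the direct estimate fails, I would try to sharpen the dichotomy. One option is to show that the exceptional edges carry Perron product $O(\eta'/\sqrt m)$ with a small $\eta'$. The other is to show that when they carry more weight, they create $\Omega(m^t)$ extra copies of $K_{t,t}$ through a $t$-th aligned row, returning to the first case.
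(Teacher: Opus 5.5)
\paragraph{Verdict: genuine gap in the case \(|A'|\le t-1\).}
Your reduction to the \(A\)--\(D\) incidence matrix and your row-cover dichotomy follow the paper's route (Theorem~\ref{thm:sparseC-rowcover}). The gap is exactly where you flag it. In the case \(|A'|\le t-1\) you never prove that the deleted (bad) edges \(E_b\) satisfy
\[
2\sum_{uv\in E_b}x_ux_v<\frac{|E_b|}{2(t-1)(\sqrt m+t-1)}.
\]
This estimate is the whole content of the lemma beyond the dichotomy.

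\paragraph{Why your proposed set does not work.}
Applying Lemma~\ref{lem:spectral-vertex-cut}(c) to \(V\setminus(A'\cup N(A'))\) only bounds the Perron mass of that set. It gives no control over bad edges with an endpoint in \(N(A')\setminus A'\). Examples are bad edges inside \(N(A')\), or bad edges at a vertex of \(A\setminus A'\) that happens to be adjacent to \(A'\).

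\paragraph{The missing idea.}
Take \(U\) to be the set of \emph{all} endpoints of bad edges. Then:
\begin{itemize}
\item \(U\cap R=\varnothing\) and \(E(H[U])=E_b\).
\item Any edge leaving \(U\) is not bad, so it must go into \(R\). Hence \(e_H(U,V(H)\setminus U)\le |R||U|\le 2(t-1)|E_b|\).
\item \(\lambda(H[U])\le\sqrt{2|E_b|}=o(\sqrt m)\).
\end{itemize}
Lemma~\ref{lem:spectral-vertex-cut}(c) then gives \(\mu(U)=O_t(|E_b|/m)\), and therefore
\[
2\sum_{uv\in E_b}x_ux_v=\bm{x}_U^{\mathsf T}A(H[U])\bm{x}_U\le \lambda(H[U])\,\mu(U)=O_t\!\left(\frac{|E_b|^{3/2}}{m}\right)=o\!\left(\frac{|E_b|}{\sqrt m}\right).
\]
This beats the split increment of Lemma~\ref{lem:Skm-increment} by a factor \(\sqrt{|E_b|/m}\to 0\), not merely by a constant. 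So your worry that the bound must be sharp to constant factors is unfounded, and no sharpening of the dichotomy is needed. To finish, combine three facts: \(\lambda(H)\le\lambda(H-E_b)+2\sum_{E_b}x_ux_v\); Theorem~\ref{thm-Ktt-free} applied to \(H-E_b\), or directly to \(H\) if \(E_b=\varnothing\); and the increment bound. Together they give \(\lambda(H)<\lambda(S_{t-1,m})\), the desired contradiction.

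\paragraph{A smaller error in the case \(|A'|\ge t\).}
Your AM--GM constant is wrong. With \(rL\approx m\) and \(r\ge t\), the product \(\binom rt\binom Lt\) is minimized at \(r=t\). There it equals \(\bigl(\frac{1}{t!\,t^t}-o(1)\bigr)m^t\), not \(m^t/(t!)^2\); this uses \(\binom rt\ge r^t/t^t\) for \(r\ge t\). The correct constant still suffices, because \(t^t/t!\le 2^{(t-1)^2}\). However, equality holds at \(t=2\), so there is no slack there, and your robust counting may lose only a \(1-o(1)\) factor.

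The paper gets this without K\H{o}v\'ari--S\'os--Tur\'an convexity. Take the rows whose normalizations have squared inner product at least \(1-\theta\) with the top right singular vector, where \(\theta=\sqrt{\varepsilon}\). These rows have pairwise normalized inner products at least \(1-2\theta\). Hence their degrees are comparable, and every \(t\)-subset of them has \((1-O(t\theta))D_\ast=(1-o(1))e/r\) common neighbours in \(D\).
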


To prove Lemma \ref{lem:lemmaD-localized-sparse}, we first show that either there are many copies of $K_{t,t}$ using only edges between $A$ and $D$, or $H$ differs from a complete bipartite graph $K_{R,B}$ in $o(m)$ edges, where $|R|\le t-1$; see the following Theorem \ref{thm:sparseC-rowcover}.  
In the latter case, we further show that 
$\lambda (H)< \lambda (S_{t-1,m})$. Thus, this case does not occur under the assumption of Lemma \ref{lem:lemmaD-localized-sparse}; see Subsection \ref{sec:4-5-2}.   

\subsubsection{Rank-one structure of $A$--$D$ incidence matrix}

\label{sec:4-5-1}

We use the following terminology: \(k\)-row covers and \(\theta\)-aligned rows.

\begin{definition} \label{def:row-cover}
Let $M\in\{0,1\}^{A\times D}$ be the $A$--$D$ incidence matrix.
For $k\ge 1$, a set $R\subseteq A$ is a \emph{$k$-row cover} if $|R|\le k$ and $e_H(A\setminus R,\ D)=0$.
Along an asymptotic regime $e(H)\to\infty$, we call $R$ an \emph{asymptotic $k$-row cover} if
$e_H(A\setminus R,\ D)=o\bigl(e_H(A,D)\bigr)$.
\end{definition}

\begin{definition} \label{def:aligned-rows}
Let $M\in\{0,1\}^{A\times D}$ be the $A$--$D$ incidence matrix and let $\bm{v}\in\mathbb R^D$ be a unit nonnegative top right singular vector of $M$
(so $\|M \bm{v}\|_2=\|M\|$).
For a vertex $a\in A$ with row vector $\bm{m}_a$, we write $\bm{x}_a:=\bm{m}_a/\|\bm{m}_a\|_2$ if $\bm{m}_a\neq 0$.
Given $\theta\in[0,1]$, we say that the $a$-th row is \emph{$\theta$-aligned} with $\bm{v}$ if $\langle \bm{x}_a,\bm{v}\rangle^2\ge 1-\theta$, and we write
\[
 R_\theta(M,\bm{v}) 
 :=
 \big\{a\in A: \bm{m}_a\neq 0\text{ and }\langle \bm{x}_a,\bm{v}\rangle^2\ge 1-\theta \big\}.
\]
\end{definition}

The next theorem shows that if the induced subgraph $H[C]$ is sparse, then the $A$--$D$ incidence matrix \(M\) is almost rank one, i.e.,
\[
        \|M\| =\bigl(1- o(1) \bigr) \|M\|_{\mathrm{F}}.
\]
Consequently, either $H$ already
contains many copies of $\Ktt$, or else the $A$--$D$ edges are essentially covered by a small set of vertices in $A$.

\begin{theorem} \label{thm:sparseC-rowcover}
Let $t\ge 2$ and $H$ be a graph with $m \to\infty$ edges. 
Assume that $H$ admits a partition $V(H)=A\sqcup C\sqcup D$ satisfying conditions \textup{(T1)}--\textup{(T3)},
and suppose that $e_H(A,C)=o(m)$ and  $|A|=o(\sqrt{m}\,)$. 
If in addition $\lambda (H)\ge \sqrt{m}$ and  $e(H[C])=o(m)$, 
then at least one of the following holds:
\begin{enumerate}
  \item[\rm(i)] We have $\texttt{\#}\Ktt(H)
     \ge 
    \left(\frac{1}{t!\,t^t}-o(1)\right) m^t 
    =\big( 1-o(1) \big)\texttt{\#}K_{t,t}(S_{t,m}).$

  \item[\rm(ii)]  There exist a subset $R\subseteq A$ with $1\le |R|\le t-1$, and a subset $B\subseteq D$ such that
  \[
    |B|=(1-o(1))\frac{m}{|R|},
    \qquad
    e_H(A\setminus R,\ B)=o(m),
    \qquad
    e_H(R,\ D\setminus B)=o(m),
  \]
  and $H[R,B]$ is complete bipartite (in particular $B$ is independent).
  Consequently, $R$ is an asymptotic $(t\!-\!1)$-row cover of the $A$--$D$ edge set in the sense of Definition~\ref{def:row-cover}.
\end{enumerate} 
\end{theorem}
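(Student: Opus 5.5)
First I will show that the $A$--$D$ incidence matrix $M$ is almost rank one. By (T1)--(T3), the edge set of $H$ splits into $E(H[C])$, $E_H(A,C)$, $E(H[A])$ and $E_H(A,D)$. Here $e(H[A])\le |A|^2/2=o(m)$, and by hypothesis the first two classes have $o(m)$ edges. Hence $e_H(A,D)=\|M\|_{\mathrm F}^2=(1-o(1))m$.

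To bound $\|M\|$ from below, I apply Lemma~\ref{lem:spectral-vertex-cut}(a) to the partition $V(H)=A\sqcup(C\cup D)$. This gives $\lambda(H)\le \max\{\lambda(H[A]),\lambda(H[C\cup D])\}+\|M_{A,C\cup D}\|$. We have $\lambda(H[A])\le |A|=o(\sqrt m)$, and $\lambda(H[C\cup D])=\lambda(H[C])\le\sqrt{2e(H[C])}=o(\sqrt m)$. The $A$--$C$ block has Frobenius norm $o(\sqrt m)$. Together with $\lambda(H)\ge\sqrt m$ this yields $\|M\|\ge(1-o(1))\sqrt m$, so
\[\|M\|^2=(1-o(1))\|M\|_{\mathrm F}^2 .\]
Writing $\bm v$ for the top right singular vector, $\sum_a\|\bm m_a\|_2^2\langle \bm x_a,\bm v\rangle^2=\|M\bm v\|_2^2$. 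Therefore $\sum_a \|\bm m_a\|_2^2(1-\langle\bm x_a,\bm v\rangle^2)=o(m)$. Choosing $\theta=\theta(m)\to0$ slowly, the rows outside $R_\theta:=R_\theta(M,\bm v)$ carry only $o(m)$ edges.

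Next I extract structure from the aligned rows. Rows are $0/1$, so $\langle\bm x_a,\bm v\rangle^2\ge1-\theta$ means $N(a)\cap D$ nearly matches the heavy support of $\bm v$. Let $T:=\{j:v_j\ge \tau\}$ for a suitable threshold. I will show that each aligned row satisfies $|N(a)\triangle T|=o(|N(a)|)$ and $|N(a)|=(1+o(1))|T|$. The intended route is that $\bm v$ is close to a normalized indicator: two aligned rows $a,b$ have $\langle\bm x_a,\bm x_b\rangle\ge 1-O(\sqrt\theta)$, i.e.\ $|N(a)\cap N(b)|\ge(1-o(1))\sqrt{d_ad_b}$. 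For $0/1$ vectors this forces $d_a\approx d_b$ and $N(a)\approx N(b)$. Thus all aligned rows share a common neighbourhood $T$ up to $o$-errors, and $|R_\theta|\cdot|T|=(1-o(1))m$. Set $k:=|R_\theta|$; note $k\le|A|=o(\sqrt m)$.

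The argument then splits on $k$.

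\textbf{Case $k\ge t$.} For any $t$ aligned rows, the common neighbourhood in $D$ has size $(1-o(1))|T|$, by a union bound over the $o(|T|)$ deviations, provided $t$ is fixed. Since $D$ is independent, the resulting copies of $K_{t,t}$ are injective. This gives
\[\texttt{\#}\Ktt(H)\ge(1-o(1))\binom{k}{t}\binom{|T|}{t}\ge (1-o(1))\frac{k^t}{t!}\cdot\frac{(m/k)^t}{t!}=(1-o(1))\frac{m^t}{(t!)^2}.\]
This bound requires $k\to\infty$ or at least $\binom kt\approx k^t/t!$. For small fixed $k\ge t$, $\binom kt\binom{m/k}{t}\approx \binom kt k^{-t}m^t/t!$, which is minimized at $k=t$ with value $m^t/(t!\,t^t)$. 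Since $\binom kt/k^t$ is increasing in $k$, the bound is at least $\frac{1}{t!\,t^t}m^t$ in all cases, giving (i).

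\textbf{Case $k\le t-1$.} Put $R:=R_\theta$ and take $B$ to be the common neighbourhood of $R$ inside $D$. Then $H[R,B]$ is complete bipartite and $|B|=(1-o(1))m/|R|$. Moreover $e_H(R,D\setminus B)=o(m)$ and $e_H(A\setminus R,B)\le e_H(A\setminus R,D)=o(m)$. This gives (ii), and here $|R|\ge1$ since $m\to\infty$.

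I expect the main obstacle to be the step converting the spectral near-rank-one condition into combinatorial near-equality of neighbourhoods. Singular-vector alignment controls only angles, so rows with very different degrees are the delicate point. I would handle this with the pairwise-inner-product argument above and a dyadic degree pigeonhole, checking carefully that the accumulated error terms stay $o(|T|)$ uniformly for $t$-subsets.
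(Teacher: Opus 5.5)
Your proposal is correct and follows the paper's proof essentially step for step: near rank one of the $A$--$D$ incidence matrix from $\|M\|\ge(1-o(1))\sqrt m$ against $\|M\|_{\mathrm F}^2=(1-o(1))m$, the $\theta$-aligned rows carrying all but $o(m)$ of the $A$--$D$ edges, pairwise inner products $\langle \bm x_a,\bm x_{a'}\rangle=1-o(1)$ giving degree comparability and large common neighbourhoods, and the split $|R|\ge t$ versus $|R|\le t-1$ with the bound $\binom{r}{t}/r^t\ge 1/t^t$. The threshold set $T=\{j:v_j\ge\tau\}$ and the dyadic pigeonhole are unnecessary, since the pairwise argument already makes all aligned-row degrees comparable and you can measure common neighbourhoods against a minimum-degree aligned row, as the paper does; also drop the intermediate $m^t/(t!)^2$ display in favour of your corrected $m^t/(t!\,t^t)$ bound.
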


\begin{proof} 
Let $H_{AD}$ be the bipartite graph with parts $A,D$ and edge set $E_H(A,D)$, and let
$M\in\{0,1\}^{A\times D}$ be its incidence matrix.
Then $\lambda(H_{AD})=\|M\|$ and $\|M\|_{\mathrm F}^2=e_H(A,D)$. 
By \textup{(T3)}, every edge of $H$ not in $H_{AD}$ lies in $H[C]$, between $A$ and $C$, or inside $A$.
Since $e(H[C])=o(m)$, $e_H(A,C)=o(m)$ and $|A| =o(\sqrt{m})$, it follows that 
\[
  e(H)-e_H(A,D)
  \ \le\
  e(H[C]) + e_H(A,C) + e(H[A])
  \ =\ o(m) + \binom{|A|}{2}
  \ =\ o(m).
\]
Denote $e:=e_H(A,D)$, so $e=(1-o(1))m$.
Since $\lambda(H\setminus H_{AD})\le \sqrt{2e(H\setminus H_{AD})}=o(\sqrt{m})$, and  the Rayleigh quotient gives 
\begin{equation}\label{eq:sparseC-triangle}
  \lambda(H) \le \lambda (H_{AD}) + \lambda (H\setminus H_{AD}) \le \|M\|+o(\sqrt{m}).
\end{equation} 
Substituting the assumption $\lambda(H)\ge \sqrt{m}$ into \eqref{eq:sparseC-triangle} gives 
$\|M\| \ge (1-o(1)) \sqrt{m}$. 
Trivially, we have $\|M\|\le \|M\|_{\mathrm F}=\sqrt e\le \sqrt{m}$. Thus, we conclude  $\|M\|=(1-o(1))\sqrt{m}$ and $ \|M\|^2 = (1-o(1)) m =(1-o(1))e$. This shows that $M$ has near rank one. 

In the following, we write $\rho:=\|M\|$ and set 
$\varepsilon:=1-\rho^2/e=o(1)$, and 
$ \theta:=\sqrt{\varepsilon} = o(1)$. 
Let $\bm{v} \in \mathbb{R}^D$ be a unit nonnegative top right singular vector of $M$ (that is, $\|M \bm{v}\| = \rho $). Let $R:=R_\theta(M, \bm{v})\subseteq A$
be the set of $\theta$-aligned rows  (see Definition~\ref{def:aligned-rows}).

Next, we prove that $R$ meets almost all $A$--$D$ edges. 
For each $a\in A$, let $\bm{m}_a$ be the $a$-th row vector of $M$. Note that $\|\bm{m}_a\|_2^2$ is equal to the degree $d_D(a):=|N_H(a)\cap D|$.
If $d_D(a)>0$, then $\bm{x}_a:=\bm{m}_a/\sqrt{d_D(a)}$ is a unit vector.
Since $\rho^2=\|M \bm{v} \|_2^2=\sum_{a\in A}\langle \bm{m}_a,\bm{v}\rangle^2$, we get 
\[
  e-\rho^2
  =
  \sum_{a\in A:\,d_D(a)>0} d_D (a) \cdot \left(1-\langle \bm{x}_a,\bm{v}\rangle^2\right).
\]
If $a\notin R$ and $d_D(a)>0$, then $1-\langle \bm{x}_a,\bm{v}\rangle^2>\theta$ by definition. Then 
\[
  e-\rho^2\ \ge\ \theta\sum_{a\in A\setminus R}d_D(a)\ =\ \theta \cdot e_H(A\setminus R,D).
\]
Since $e-\rho^2=\varepsilon e=\theta^2 e$, $\theta =o(1)$ and $e =(1-o(1))m$, 
we get
\begin{equation}\label{eq:sparseC-aligned-rowcover}
  e_H(A\setminus R,D)\ \le\ \theta e\ =\ o(m).
\end{equation}
In particular, we see that $R\neq\varnothing$ since $e_H(A,D)=(1-o(1))m$. 

In what follows, we prove that 
aligned rows of $R$ have large intersection. 
Fix distinct vertices $a,a'\in R$ with $d_D(a')\ge d_D(a)>0$. 
There exist a real number $\alpha$ and a vector $\bm{u}$ with $\bm{u}\perp \bm{v}$ such that 
\[
        \bm{x}_a=\alpha \bm{v}+ \bm{u}.
\]
Then $\alpha=\langle \bm{x}_a,\bm v\rangle\ge0$, and $\alpha^2\ge 1-\theta$, by the definition of \(R\), and hence
\[
        \|\bm{u}\|_2^2=1-\alpha^2\le \theta.
\]
Similarly, writing \(\bm{x}_{a'}=\alpha'\bm v+\bm u'\), we have $\alpha'\ge0,$ and $
\|\bm{u}'\|_2^2\le \theta.$
Using the Cauchy--Schwarz inequality, we get 
\begin{equation}
  \langle \bm{x}_a, \bm{x}_{a'} \rangle
  =\alpha\alpha'+\langle \bm{u}, \bm{u}'\rangle
  \ge (1-\theta)-\|\bm{u}\|_2\|\bm{u}'\|_2
  \ge 1-2\theta.
\end{equation}
Consequently, it follows that 
\begin{equation}\label{eq:sparseC-aligned-intersection}
  |N_D(a)\cap N_D(a') |
  =
  \langle \bm{m}_a,\bm{m}_{a'}\rangle
  =
  \sqrt{d_D(a)d_D(a')}\,\langle \bm{x}_a,\bm{x}_{a'}\rangle
  \ge (1-2\theta)\sqrt{d_D(a)d_D(a')}.
\end{equation}
Since the left-hand side is at most $d_D(a)$, we also have $d_D(a)\le d_D(a')\le d_D(a)/(1-2\theta)^2$. In other words, the degrees of any two vertices in $R$ are comparable. 

For any subset $S\subseteq R$ of size $s:=|S|$, let $a_\ast\in S$ be a vertex of minimum degree in $D$. Then for 
each $a\in S\setminus\{a_\ast\}$, we have 
$d_D(a)\ge d_D(a_\ast)$. 
By \eqref{eq:sparseC-aligned-intersection}, we get 
$|N_D(a_\ast) \cap N_D(a)| \ge (1-2\theta)d_D(a_\ast)$, which yields 
$|N_D(a_\ast) \setminus N_D(a)|\le 2\theta \cdot d_D(a_\ast)$.
Writing $N_D(S):=\bigcap_{a\in S}N_D(a)$, it follows that 
\begin{equation}\label{eq:sparseC-aligned-common-neighborhood}
  |N_D(S)|\ \ge\ (1-2(s-1)\theta) \cdot d_D(a_\ast).
\end{equation}

\smallskip
\noindent
{\bf  Case (i).} 
{\it Assume that $|R|\ge t$.}
Write $r:=|R|$ and $D_\ast:=\min_{a\in R}d_D(a)$. 
From \eqref{eq:sparseC-aligned-rowcover}, we have $\sum_{a\in R}d_D(a)=e_H(R,D)\ge (1-\theta)e$.
Also $\sum_{a\in R}d_D(a)\le e$, so $rD_\ast\le e$.
On the other hand, degree comparability gives $\max_{a\in R}d_D(a)\le D_\ast/(1-2\theta)^2$, hence
\[
  (1-\theta)e\ \le\ \sum_{a\in R}d_D(a)\ \le\ r\cdot \frac{D_\ast}{(1-2\theta)^2},
\]
which yields $D_\ast\ \ge\ (1-\theta)(1-2\theta)^2\,\frac{e}{r}.$
Since $r\le |A|=o(\sqrt{m})$, this implies 
\[
        D_\ast = (1-o(1)) \frac{e}{r}= \Omega (\sqrt{m}) \to\infty.
\]

Every $t$-subset $T\subseteq R$ has at least
$L:=\lfloor (1-2(t-1)\theta)D_\ast\rfloor$ common neighbours in $D$ by \eqref{eq:sparseC-aligned-common-neighborhood}. 
Choosing any \(t\) of these common neighbours gives a copy of \(\Ktt\) with one side \(T\) and the other side in \(D\). Hence
\[
  \texttt{\#}\Ktt(H)\ \ge\ \binom{r}{t}\binom{L}{t}.
\]
Note that $L=(1-o(1))e/r\to\infty$. So $\binom{L}{t}=\bigl(\frac{1}{t!}-o(1)\bigr)(e/r)^t$.
Since $r\ge t$,
\[
  \frac{\binom{r}{t}}{r^t}
  =
  \frac{1}{t!}\prod_{k=0}^{t-1}\left(1-\frac{k}{r}\right)
  \ge \frac{1}{t!}\prod_{k=0}^{t-1}\left(1-\frac{k}{t}\right)
  =
  \frac{1}{t^t}.
\]
Combining these bounds and $e=(1-o(1))m$ gives alternative~\rm(i).

\smallskip
\noindent
{\bf  Case (ii).} 
\emph{Assume that $|R|\le t-1$.}
Let $B:=N_D(R)=\bigcap_{a\in R}N_D(a)$.
Then $H[R,B]$ is complete bipartite and $B$ is an independent set. 
By \eqref{eq:sparseC-aligned-rowcover}, we have $e_H(A\setminus R,D)=o(m)$. Hence
\[
  e_H(R,D)\ =\ e_H(A,D)-e_H(A\setminus R,D)\ =\ (1-o(1))m.
\]
Recall that $D_\ast:=\min_{a\in R}d_D(a)$.
Degree comparability gives $\max_{a\in R}d_D(a)\le D_\ast/(1-2\theta)^2$, so
\[
  |R| D_\ast\ \le\ \sum_{a\in R}d_D(a)\ =\ e_H(R,D)\ \le\ |R| \cdot \frac{D_\ast}{(1-2\theta)^2},
\]
and therefore $D_\ast=(1-o(1)) \frac{m}{|R|}$, since $\theta=o(1)$.
Applying \eqref{eq:sparseC-aligned-common-neighborhood} with $S=R$ shows
\[
  |B|\ =\ (1-o(1))D_\ast\ =\ (1-o(1))\frac{m}{|R|}.
\]
Finally, since $e_H(R,B)=|R||B|=(1-o(1))m$ while $e_H(R,D)=(1-o(1))m$, we get
\[
        e_H(R,D\setminus B)=o(m).
\]
Also $e_H(A\setminus R,B)\le e_H(A\setminus R,D)=o(m)$.
This is alternative~\rm(ii).
\end{proof}

\subsubsection{Proof of Lemma \ref{lem:lemmaD-localized-sparse}}
\label{sec:4-5-2}

Note that $\lambda(H)>\lambda(S_{t-1,m}) \ge \sqrt{m}$ for all sufficiently large \(m\). 
Applying Theorem \ref{thm:sparseC-rowcover}, 
if part \textup{(i)} occurs, then
\[
\texttt{\#}\Ktt(H)
     \ge 
    \left(\frac{1}{t!\,t^t}-o(1)\right) m^t
    \ge \big( B_t-o(1) \big) m^t.
\]
Thus we are done. It remains to show that the small row-cover alternative \textup{(ii)} cannot occur.

Suppose on the contrary that Theorem \ref{thm:sparseC-rowcover} \textup{(ii)} occurs. 
An edge of $H$ not covered by the vertices of $R$ is called {\it bad}. We aim to bound the contribution of bad edges to the spectral radius. 
Let $E_b$ denote the set of all bad edges in $H$, and let $U$ be the set of vertices incident to any edge in $E_b$. 
Let $B\subseteq D$ be the set of common neighbors of $R$ from
Theorem~\ref{thm:sparseC-rowcover} \textup{(ii)}.
Since $H[R,B]$ is complete bipartite and $|B|=(1-o(1))m/|R|$, we have $e_H(R,B)\ =\ |R||B|\ =\ (1-o(1))m.$ Every edge of $E_b$ has no endpoint in $R$, hence $E_b \cap E_H(R,B)=\varnothing$ and therefore
\[
 |E_b|\ \le\ m-e_H(R,B)\ =\ o(m).
\]
In particular, we have $m-|E_b|\to\infty$.

If $|E_b|=0$, then every edge of $H$ meets $R$, so \(R\) is a vertex cover of size at most \(t-1\). Hence $H$ is $\Ktt$-free.
For sufficiently large $m$, applying Theorem \ref{thm-Ktt-free} implies $\lambda(H)\le \lambda(S_{t-1,m})$, which contradicts the assumption. 
Now we may assume $|E_b|\ge 1$.

\smallskip
In what follows, we show that the bad edges have small Perron weight and small contribution to the Rayleigh quotient of $H$. 
Note that $U\cap R=\varnothing$ and $|U|\le 2|E_b|$. Moreover, every edge leaving $U$ goes to $R$:
if $uv\in E_H(U, V(H)\setminus U)$, then $u\in U$ and $v\notin U$, forcing $uv \notin E_b$; since \(u\notin R\), we must have $v\in R$.
Hence
\[
  e_H\!\bigl(U,\,V(H)\setminus U\bigr)\ \le \ e_H(U,R)\ \le\ |R||U|\ \le\ 2|R| |E_b|\ \le\ 2(t-1)|E_b|.
\]
Denote $\lambda_{U}:=\lambda(H[U])$. Since \(E(H[U])=E_b\), the Frobenius-norm bound gives $\lambda_{U} \le  \sqrt{2|E_b|} = o(\sqrt{m}).$
The assumption $\lambda(H)>\lambda(S_{t-1,m})$ implies $\lambda(H)\ge \sqrt{m}$ for all sufficiently large $m$. Hence
\[
        (\lambda(H)-\lambda_{U})^2=(1-o(1))m>0.
\]
Recall that $\mu(U):=\sum_{u\in U} x_u^2$ denotes the Perron mass of $U$ with respect to $\bm{x}$. 
Denote $W:=V(H)\setminus U$. 
Applying Lemma~\ref{lem:spectral-vertex-cut}\textup{(c)} to the cut $U\,\sqcup\,W$, it follows that  
\[
  \mu(U)
  \ \le\
  \frac{e_H(U,W)}{(\lambda(H)-\lambda_{U})^2+e_H(U,W)}
  \ =\ O_t\!\left(\frac{|E_b|}{m}\right).
\]
We denote $w(E_b; \bm{x}) := \sum_{uv\in E_b} x_ux_v$. 
Since $E_b=E(H[U])$, the Rayleigh quotient gives  
\[
  2w(E_b; \bm{x})
  =\bm{x}_{U}^{\mathsf T}A(H[U]) \bm{x}_U
  \le
  \lambda_U \cdot \| \bm{x}_U\|_2^2
  =
  \lambda_U\cdot \mu(U),
\]
so $w(E_b; \bm{x})
        \le
        \frac12\lambda_U \mu(U)
        =
        O_t\!\left(\frac{|E_b|^{3/2}}{m}\right).$
Since $|E_b|=o(m)$, we have
\begin{equation}
    \label{eq-bad-edge-small}
    w(E_b; \bm{x}) = o \left(|E_b| \big/\sqrt{m} \right). 
\end{equation} 
Let $H_0 $ be the subgraph of $H$ obtained by deleting all edges of $E_b$.
Then $e(H_0)=m- |E_b|$. By definition of $E_b$, every edge of $H_0$ is incident to a vertex of $R$.
Thus, $R$ is a vertex cover of $H_0$ with $|R|\le t-1$, so $H_0$ is $\Ktt$-free.
Applying Theorem \ref{thm-Ktt-free} to $H_0$, since \(m-|E_b|\to\infty\), gives
$\lambda(H_0) \le \lambda(S_{t-1,m- |E_b|}).$
Therefore, by the Rayleigh quotient, we have 
\begin{equation}\label{eq:sparseC-rowcover-lamH-upper}
  \lambda(H)\ \le \ \lambda(H_0)+2w(E_b; \bm{x}) \ \le\ \lambda(S_{t-1,m-|E_b|}) + 2w(E_b;\bm{x}).
\end{equation}
On the other hand, applying Lemma~\ref{lem:Skm-increment} with $k=t-1$ and $d=|E_b|$ yields
\begin{equation} \label{eq-gap-large}
      \lambda(S_{t-1,m-|E_b|})
  \ \le\
  \lambda(S_{t-1,m})-\frac{|E_b|}{2(t-1)(\sqrt{m}+t-1)}, 
\end{equation} 
for all sufficiently large $m$.
Using \eqref{eq-bad-edge-small}, we get $2w(E_b; \bm{x})
        =
        o\!\left(\frac{|E_b|}{\sqrt{m}+t-1}\right).$
Combining \eqref{eq:sparseC-rowcover-lamH-upper} and \eqref{eq-gap-large}, it follows that $\lambda(H)<\lambda(S_{t-1,m})$, contradicting the assumption. Thus,  part \textup{(ii)} of Theorem~\ref{thm:sparseC-rowcover} cannot occur.  This completes the proof of Lemma~\ref{lem:lemmaD-localized-sparse}.

\subsection{Putting things together}

To prove Theorem \ref{thm:sharp-Ktt}, we also need a lemma of the authors \cite[Lemma 2.3]{LiLiuZhang2025EdgeSpectralESS}, which says that a fixed multiplicative gap above \(\sqrt m\) forces delocalization of the Perron eigenvector. 

\begin{lemma}[Li--Liu--Zhang \cite{LiLiuZhang2025EdgeSpectralESS}] \label{lem:max} 
Let $H$ be a graph with $m$ edges and unit Perron eigenvector \(\bm{x}=(x_v)_{v\in V(H)}\).  
 If $\lambda^2 (H) \ge (1+\delta)m$ where $0<\delta \le 1/3$, then 
$\|\bm{x}\|_{\infty} m^{1/4} < \delta^{-4}$.  
\end{lemma}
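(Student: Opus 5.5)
The plan is to argue by contradiction. Fix a vertex $v$ with $x_v=L:=\|\bm{x}\|_\infty$, suppose $L\ge \delta^{-4}m^{-1/4}$, and show that the Perron mass is then so concentrated that the remaining edges cannot supply the excess $\delta m$ in $\lambda^2\ge(1+\delta)m$. The natural starting point is the second-power eigen-equation at $v$:
\[
  \lambda^2 L=\sum_{u\in N(v)}\lambda x_u=d(v)L+\sum_{u\in N(v)}\ \sum_{w\in N(u)\setminus\{v\}}x_w .
\]
Since $d(v)\le m$, the hypothesis $\lambda^2\ge(1+\delta)m$ forces the walk term $\Sigma:=\sum_{u\in N(v)}\sum_{w\in N(u)\setminus\{v\}}x_w$ to be at least $\delta mL+(m-d(v))L$. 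So the aim is to prove an upper bound on $\Sigma$ that contradicts this once $L m^{1/4}\ge\delta^{-4}$.

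To bound $\Sigma$, I would split the vertices at the threshold $\tau:=\delta^{2}m^{-1/4}$ into a heavy set $P:=\{w:x_w\ge\tau\}$ and a light set $Q:=V\setminus P$. Since $\|\bm{x}\|_2=1$, we have $|P|\le\tau^{-2}=\delta^{-4}\sqrt m$.
\begin{itemize}
\item The light contribution is at most $\tau$ times the number of length-two walks from $v$ ending in $Q$, and hence at most $\tau\cdot 2e(N(v),V\setminus\{v\})$.
\item The heavy contribution is controlled by Cauchy--Schwarz, giving at most $\sqrt{\sum_{w\in P}c(v,w)^2}$, where $c(v,w)$ is the codegree of $v$ and $w$.
\end{itemize}
I would then feed in the global information, in the same way as in the proof of Lemma~\ref{lem:lemmaC-localized-nonsparse}. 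I would apply Lemma~\ref{lem:spectral-vertex-cut}(b),(c) to the cut $U=P$, $W=Q$, together with $\lambda(H[P])\le\sqrt{2e(H[P])}$, $\lambda(H[Q])\le\sqrt{2e(H[Q])}$ and $e(H[P])+e(H[Q])+e(P,Q)=m$. This shows that $\lambda^2\ge(1+\delta)m$ cannot hold if both $e(P,Q)$ and the internal spectral radii are small, so a definite fraction of $\lambda^2$ must come from edges that are essentially ``doubled'', for instance through the quadratic identity $\lambda^2=\sum_u(\sum_{w\sim u}x_w)^2$.

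The main obstacle is the middle step: converting a large coordinate $L$ into a quantitative upper bound on the walk term. The trouble is that the trivial bounds (Frobenius, $\lambda^2\le 2m$) lose exactly the factor $1+\delta$ we need. What I would try first is to compare $\lambda^2=\sum_u(\sum_{w\sim u}x_w)^2$ with $\sum_u d(u)\sum_{w\sim u}x_w^2=\sum_w x_w^2\sum_{u\sim w}d(u)$. The point of this comparison is that a large $x_v$ forces many neighbours $u$ of $v$ to have $(\sum_{w\sim u}x_w)^2$ dominated by the single term $x_v^2$, which yields only about $m$ rather than $(1+\delta)m$. The precise exponent $\delta^{-4}$ would come from choosing $\tau$ as a power of $\delta$ and balancing the light and heavy contributions, and I would not try to optimise it.
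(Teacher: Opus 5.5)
Your proposal stops exactly where the proof has to happen. You name the upper bound on the walk term $\Sigma$ as ``the main obstacle'' and leave it open, and the tools you offer for it cannot close the argument.

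The light part of your split is fine, but the heavy part is not. The Cauchy--Schwarz step $\sum_{w\in P}c(v,w)x_w\le(\sum_{w\in P}c(v,w)^2)^{1/2}$ uses only $\|\bm{x}_P\|_2\le 1$. It discards the one fact that matters, namely $x_w\le L$. Its right-hand side can be of order $m$, while the quantity you must beat, $\delta mL$, may be as small as $\delta^{-3}m^{3/4}$.

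The book $K_2\vee\overline{K_q}$ shows this concretely. There $L\approx\tfrac12$ and $P$ consists of the two spine vertices. Your heavy bound equals $q\approx m/2$, which already exceeds the target $\delta mL+(m-d(v))L\approx(\tfrac12+\delta)\tfrac m2$. So your light and heavy estimates alone cannot produce a contradiction.

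Moreover, your threshold $\tau=\delta^2m^{-1/4}$ lies \emph{below} $m^{-1/4}$. Then $|P|$ can be as large as $\delta^{-4}\sqrt m$, and the number of edges inside $P$ is not controlled by counting. Your suggested use of Lemma~\ref{lem:spectral-vertex-cut} and of $\lambda^2=\sum_u(\sum_{w\sim u}x_w)^2$ is never turned into an actual inequality.

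The missing idea is to count the two-step walks edge by edge and compare each term with $L$ itself. A walk $v\to u\to w$ traverses the edge $uw$ from $u$ to $w$, and each edge is traversed at most once in each direction. Hence
\[
\lambda^2L=\sum_{u\in N(v)}\sum_{w\in N(u)}x_w\le\sum_{jk\in E(H)}(x_j+x_k)\le mL+\sum_{jk\in E(H)}\min\{x_j,x_k\}.
\]
Now put the threshold \emph{above} $m^{-1/4}$ and let $P:=\{u:x_u>\delta^{-1/2}m^{-1/4}\}$.
\begin{itemize}
\item One has $|P|\le\delta\sqrt m$, hence at most $\delta^2m/2$ edges inside $P$, each with $\min\{x_j,x_k\}\le L$.
\item Every other edge has $\min\{x_j,x_k\}\le\delta^{-1/2}m^{-1/4}\le\delta^{3/2}L$, as soon as $L\ge\delta^{-2}m^{-1/4}$.
\end{itemize}
This gives $\lambda^2\le(1+\delta^{3/2}+\delta^2)m<(1+\delta)m$ for $0<\delta\le 1/3$, a contradiction. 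This is the paper's argument, and it even yields the stronger bound $\|\bm{x}\|_\infty m^{1/4}<\delta^{-2}$.
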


We include a proof for the convenience of readers. 

\begin{proof} 
    We denote $\lambda = \lambda(H)$ and 
    $x_i=\max\{x_v: v\in V(H)\}$. 
   We actually show that $x_i m^{1/4} < \delta^{-2}$. 
    Suppose on the contrary that $x_i \ge \delta^{-2}m^{-1/4}$. Let $L=\{u\in V(H): x_u > \delta^{-1/2}m^{-1/4}\}.$
     Then 
    \begin{equation} \label{eq-square}
        \lambda^2x_i
        =
        \sum_{j\in N(i)} \sum_{k\in N(j)} x_k 
        \le
        \sum_{jk\in E(H)} (x_j +x_k) 
        =
        \sum_{jk\in E(H[L])} (x_j+x_k) 
        +
        \sum_{jk\in E(H)\setminus E(H[L])} (x_j+x_k).
    \end{equation}
   Since $\sum_{v\in V(H)}x_v^2=1$, we have 
    $|L|\le \delta m^{1/2}$. So the first sum on the right-hand side of \eqref{eq-square} is at most $2x_i\binom{|L|}{2}\le \delta^2 mx_i.$
    For each $jk\in E(H)\setminus E(H[L])$, 
    we have $\min\{x_j,x_k\}\le \delta^{-1/2}m^{-1/4} \le \delta^{3/2}x_i$. 
    Then $x_j +x_k \le x_i + \delta^{3/2}x_i= 
        (1+ \delta^{3/2})x_i.$
    Therefore, we obtain from \eqref{eq-square} that $(1+\delta)m x_i
        \le
        \lambda^2 x_i
        \le 
        \delta^2 mx_i + (1+ \delta^{3/2})x_im.$
    However, for any $0< \delta \le 1/3$, we have 
    $1+ \delta > \delta^2 + (1+ \delta^{3/2})$, which is a contradiction. 
\end{proof}

To start with the proof of Theorem \ref{thm:sharp-Ktt}, we explain how the key lemmas in Section \ref{sec-four-key-lemmas} combine.
Assume $t\ge 3$ throughout (the case $t=2$ is covered by Theorem \ref{thm:counting-C4}) and write
$B_t:=2^{-(t-1)^2}/(t!)^2$.
Lemma~\ref{lem:lemmaA-pruning} prunes $G$ to a subgraph $H$ satisfying the $\eta$-heavy-edge condition
(Definition~\ref{def:heavy-edge}) while preserving the split gap; if pruning deletes a positive fraction of edges, it also yields a fixed
multiplicative gap $\lambda(H)\ge (1+\delta)\sqrt{e(H)}$.
Lemmas~\ref{lem:lemmaB-delocalized}--\ref{lem:lemmaD-localized-sparse} then count many copies of $\Ktt$ in $H$, splitting according to the
localization parameter $g =\|\bm{x}\|_\infty e(H)^{1/4}$ of the Perron eigenvector.

\begin{proof}[Proof of Theorem \ref{thm:sharp-Ktt}]
Fix \(\varepsilon>0\). Suppose for contradiction that there exists a sequence of graphs \(\{G_n\}_{n\ge1}\) with $e(G_n)=m_n\to\infty,$ $\lambda(G_n)>\lambda(S_{t-1,m_n}),$
and $\texttt{\#}\Ktt(G_n)\le (B_t-\varepsilon)m_n^t.$
Let \(\eta:=1/(16t)\).
Applying Lemma~\ref{lem:lemmaA-pruning} to each \(G_n\), we obtain a subgraph $H_n\subseteq G_n$ with $m_n':=e(H_n)$,
$\alpha_n:=m_n'/m_n$, and unit Perron eigenvector $\bm{x}^{(n)}$ satisfying the heavy-edge condition and
$\lambda(H_n)>\lambda(S_{t-1,m_n'})$. 
Passing to a subsequence, we may assume $\alpha_n\to\alpha\in[0,1]$ and set
\[
        g_n:=\|\bm{x}^{(n)}\|_\infty(m_n')^{1/4}.
\]
Lemma~\ref{lem:lemmaA-pruning} gives
\[
        \frac{\lambda(H_n)}{\sqrt{m_n'}}
        \ge
        1+(1-4\eta)(\alpha_n^{-1/2}-1).
\]
Since $\alpha_n := \frac{m_n'}{m_n} \ge c_{\eta} $, where $c_{\eta}:=\left(\frac{1-4\eta}{\sqrt{2}-4\eta}\right)^2\in(0,1)$
by Claim \ref{cl:H-linearly-size}, the numbers $\alpha_n$ are bounded away from $0$; in particular
$\alpha\in(0,1]$.
Passing to a further subsequence, assume either $g_n=O(1)$ or $g_n\to\infty$.

\smallskip

\noindent{\bf Case 1. $\alpha=1$.}
Then $(m_n')^t=(1+o(1))m_n^t$.
If $g_n=O(1)$, then Lemma~\ref{lem:lemmaB-delocalized} gives
$\texttt{\#}\Ktt(H_n)
        \ge
        \bigl(B_t-o(1)\bigr)(m_n')^t,$
where we used $\lambda(H_n)>\lambda(S_{t-1,m_n'})\ge \sqrt{m_n'}$ for all sufficiently large \(n\).
Otherwise, if $g_n\to\infty$, then the structural result of Theorem~\ref{thm:localized-structure} yields an ACD partition
\[
        V(H_n)=A_n\sqcup C_n\sqcup D_n.
\]
If $e(H_n[C_n])=(\beta+o(1))m_n'$ for some constant $\beta\in(0,1]$ along a subsequence, then
Lemma~\ref{lem:lemmaC-localized-nonsparse}  applies; otherwise, if $e(H_n[C_n])=o(m_n')$, then 
Lemma~\ref{lem:lemmaD-localized-sparse} applies.
In all subcases, we get
\[
    \texttt{\#}\Ktt(G_n)
        \ge
        \texttt{\#}\Ktt(H_n)
        \ge
        \bigl(B_t-o(1)\bigr)m_n^t,
\]
a contradiction.

\smallskip

\noindent{\bf Case 2. $\alpha<1$.}
Discarding finitely many terms, we may assume $\alpha_n\le 1-\varepsilon_0$ for some fixed $\varepsilon_0>0$.
Denote $\delta_n:=(1-4\eta)(\alpha_n^{-1/2}-1).$
Then Lemma~\ref{lem:lemmaA-pruning} gives
$\lambda(H_n)\ge (1+\delta_n)\sqrt{m_n'}.$
Since $\alpha_n\to\alpha\in(0,1)$, we have $\delta_n\to\delta :=(1-4\eta)(\alpha^{-1/2} -1)\in(0,\infty).$
Choose a fixed \(\zeta\in(0,1/3]\) such that $(1+\delta_n)^2\ge 1+\zeta$
for all sufficiently large \(n\). Lemma~\ref{lem:max} then gives \(g_n=O(1)\).
Then we can apply Lemma~\ref{lem:lemmaB-delocalized} and get that 
\[
 \texttt{\#}\Ktt(H_n)
 \ge
 \bigl(B_t(1+\delta_n)^{2t(t-1)}-o(1)\bigr)\alpha_n^t m_n^t.
\] 

We now show $\alpha_n^t(1+\delta_n)^{2t(t-1)}\ge 1.$
This is equivalent to $\big(1+(1-4\eta)(\alpha_n^{-1/2} -1)\big)^{t-1} \ge \alpha_n^{-1/2}.$
Denote $\beta_n:=\alpha_n^{-1/2}> 1$ and $c:=1-4\eta$. 
Then we need to prove that 
\[
        \big(1+c(\beta_n-1) \big)^{t-1} \ge \beta_n.
\]
Since \(\eta=1/(16t)\) and $t\ge 3$, we have $c> \frac{1}{t-1}.$
Let
\[
        f(x):=(1+cx-c )^{t-1}-x,
        \qquad x\in (1,\infty).
\]
Then        $f''(x) =(t-1)(t-2)c^2(1+cx-c)^{t-3} > 0.$
Thus
$f'(x)> f'(1)=(t-1)c-1>0$
for all \(x>1\), and hence
$f(x)>f(1)=0$
for all \(x>1\). Setting \(x=\beta_n>1\) gives the desired inequality.
Thus,
\[
     \texttt{\#}\Ktt(G_n)
        \ge
        \texttt{\#}K_{t,t}(H_n)
        \ge
        \bigl(B_t-o(1)\bigr)m_n^t,
\]
again contradicting \(\texttt{\#}\Ktt(G_n)\le (B_t-\varepsilon)m_n^t\).
\end{proof}

\section{Proof of Theorem \ref{thm:sharp-C2t}}

\label{sec:even-cycles}

We first record the basic spectral counting input for even cycles.

\begin{lemma}
\label{lem:spectral-count-C2t}
Let \(G\) be an \(n\)-vertex graph with spectral radius \(\lambda\).
Then, for every fixed \(t\ge2\),
\[
        \texttt{\#}C_{2t}(G)
        \ge
        \frac{1}{4t}\lambda^{2t}-O_t(n^{2t-1}).
\]
\end{lemma}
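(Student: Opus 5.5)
The plan mirrors the proof of Lemma~\ref{lem:spectral-count-Ktt}, with the spectral Sidorenko inequality for $K_{t,t}$ replaced by the trace bound \eqref{eq:even-cycle}.

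First, I would write $\homc(C_{2t},G)=\operatorname{tr}A(G)^{2t}=\sum_i\lambda_i^{2t}\ge\lambda^{2t}$. This holds because every term is nonnegative, as $2t$ is even. This gives the homomorphism lower bound with no loss.

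Next, I would bound the non-injective homomorphisms. A map $V(C_{2t})\to V(G)$ that fails to be injective identifies some pair of the $2t$ vertices. Choosing that pair in $\binom{2t}{2}$ ways and then the images of the remaining $2t-1$ classes in at most $n^{2t-1}$ ways gives at most $\binom{2t}{2}n^{2t-1}$ non-injective maps. In particular there are at most that many non-injective homomorphisms, so
\[
\inj(C_{2t},G)\ge\lambda^{2t}-\binom{2t}{2}n^{2t-1}.
\]

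Finally, I would divide by $|\Aut(C_{2t})|=4t$, the order of the dihedral group of the $2t$-gon. This gives
\[
\texttt{\#}C_{2t}(G)=\frac{\inj(C_{2t},G)}{4t}\ge\frac{1}{4t}\lambda^{2t}-\frac{\binom{2t}{2}}{4t}n^{2t-1},
\]
and the last term is $O_t(n^{2t-1})$.

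There is no real obstacle. The only points needing care are that all eigenvalue powers $\lambda_i^{2t}$ are nonnegative, so dropping all but the top one is legitimate, and that the automorphism count is $4t$ rather than $2t$.
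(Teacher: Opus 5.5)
Your proposal is correct and follows the same route as the paper: the trace identity $\homc(C_{2t},G)=\operatorname{tr}A(G)^{2t}\ge\lambda^{2t}$, subtracting the $O_t(n^{2t-1})$ non-injective maps, and dividing by $|\Aut(C_{2t})|=4t$. Your explicit constant $\binom{2t}{2}/(4t)$ for the error term is a harmless refinement of the paper's $O_t(n^{2t-1})$.
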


\begin{proof}
The number \(\homc(C_{2t},G)\) is the number of closed walks of length
\(2t\) in \(G\).  Hence
\[
        \homc(C_{2t},G)
        =
        \operatorname{tr}A(G)^{2t}
        =
        \sum_i \lambda_i(G)^{2t}
        \ge
        \lambda(G)^{2t}.
\]
The number of non-injective maps from \(V(C_{2t})\) to \(V(G)\) is
\(O_t(n^{2t-1})\).  Dividing the number of injective homomorphisms by
\(|\Aut(C_{2t})|=4t\) gives the result.
\end{proof}

The sharp supersaturation behaviour for even cycles is different from
that for complete bipartite graphs.  For comparison, the random graphs used in
Example~\ref{exam:sharpness-Bt} give the constant \(1/(4t)\) for
\(C_{2t}\), whereas the split graphs \(S_{t,m}\) give the smaller
constant $\frac{(t-1)!}{2t^t}$ for every \(t\ge3\).  Thus the sharp construction for \(C_{2t}\) is
split graphs rather than balanced dense graphs. Indeed, let \(G\sim G(n,m)\) with \(n=2\sqrt m+O_t(1)\).  Then
\[
 \mathbb E[\texttt{\#}C_{2t}(G)]
 =
 {n\choose 2t}\frac{(2t)!}{4t}
 \cdot
 \frac{\binom{N-2t}{m-2t}}{\binom{N}{m}}
 =
 \left(\frac{1}{4t}+O_t(m^{-1/2})\right)m^t,
\]
where \(N=\binom n2\).  On the other hand, if
\(m-\binom t2=tq+r\) with \(0\le r\le t-1\), then the leading
contribution to \(\texttt{\#}C_{2t}(S_{t,m})\) comes from choosing all
\(t\) clique vertices and \(t\) vertices from the independent side.
Thus
\[
        \texttt{\#}C_{2t}(S_{t,m})
        =
        \left(1+o(1)\right)
        {q\choose t}\frac{t!(t-1)!}{2}
        =
        \left(\frac{(t-1)!}{2t^t}+o(1)\right)m^t .
\]

We next state the cycle analogues of the counting lemmas used for
\(K_{t,t}\).  Their proofs are the same as the corresponding arguments
in Section~\ref{sec-four-key-lemmas}, with Lemma~\ref{lem:spectral-count-C2t}
replacing Lemma~\ref{lem:spectral-count-Ktt}.

\begin{lemma}[Delocalized cycle counting]
\label{lem:cycle-delocalized}
Fix \(t\ge2\) and \(\eta\in(0,\frac14)\).  Let \(H\) be an \(m\)-edge
graph with unit Perron eigenvector \(\bm{x}\), and set $g:=\|\bm{x}\|_\infty m^{1/4}.$
If \(H\) satisfies the \(\eta\)-heavy-edge condition and \(g=O(1)\), then $\texttt{\#}C_{2t}(H)
        \ge
        \left(\frac{1}{4t}-o(1)\right)\lambda(H)^{2t}.$
\end{lemma}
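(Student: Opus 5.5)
We follow the proof of Lemma~\ref{lem:lemmaB-delocalized} almost verbatim. The one change is that the counting input is Lemma~\ref{lem:spectral-count-C2t} in place of Lemma~\ref{lem:spectral-count-Ktt}. The idea is that delocalization of $\bm{x}$, together with the heavy-edge condition, confines the non-isolated part of $H$ to $O(\sqrt m)$ vertices. The degenerate-homomorphism error $O_t(n^{2t-1})$ is then of lower order than the main term $\lambda(H)^{2t}/(4t)$.

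\textbf{Step 1 (reduce to no isolated vertices).} We delete all isolated vertices of $H$. This changes none of $m$, $\lambda(H)$, $\bm{x}$ restricted to the remaining vertices (isolated vertices carry zero Perron weight, or may be discarded and $\bm{x}$ renormalized trivially), $g$, the heavy-edge condition, or $\texttt{\#}C_{2t}(H)$. Put $n:=|V(H)|$ and fix $G_0$ with $g\le G_0$, so that $\|\bm{x}\|_\infty\le G_0m^{-1/4}$.

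\textbf{Step 2 (vertex bound).} Let $v$ be any vertex and pick a neighbour $u$. The heavy-edge condition gives
\[
x_v\ge \frac{\eta}{x_u\sqrt m}\ge \frac{\eta}{G_0}\,m^{-1/4}.
\]
Summing $x_v^2$ over all $v$ and using $\|\bm{x}\|_2=1$ gives $n\le G_0^2\eta^{-2}\sqrt m=O(\sqrt m)$.

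\textbf{Step 3 (lower bound on $\lambda$).} The heavy-edge condition gives
\[
\lambda(H)=2\sum_{uv\in E(H)}x_ux_v\ge 2\eta\sqrt m,
\]
so $\lambda(H)^{2t}=\Omega_{t,\eta}(m^t)$.

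\textbf{Step 4 (count).} Lemma~\ref{lem:spectral-count-C2t} gives
\[
\texttt{\#}C_{2t}(H)\ge \frac{1}{4t}\lambda(H)^{2t}-O_t(n^{2t-1}).
\]
By Step 2, the error is $O_t(m^{t-1/2})=o(m^t)$. By Step 3, this is $o(\lambda(H)^{2t})$, so it is absorbed into the coefficient, giving $\bigl(\tfrac{1}{4t}-o(1)\bigr)\lambda(H)^{2t}$.

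No step is a real obstacle. The only point needing care is Step 3: the error must be compared with $\lambda(H)^{2t}$ rather than with $m^t$. The lower bound $\lambda(H)\ge 2\eta\sqrt m$ from the heavy-edge condition is what makes that comparison legitimate.
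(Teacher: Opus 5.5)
Your proposal is correct and follows the paper's proof essentially step for step. Both delete isolated vertices, use the heavy-edge condition with $g=O(1)$ to get $|V(H)|=O(\sqrt m)$, use $\lambda(H)=2\sum_{uv\in E(H)}x_ux_v\ge 2\eta\sqrt m$, and absorb the $O_t(|V(H)|^{2t-1})$ error from Lemma~\ref{lem:spectral-count-C2t} into $o(\lambda(H)^{2t})$.
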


\begin{proof}
Delete isolated vertices, if any.  This does not change the number of
edges, the spectral radius, the Perron vector on the non-isolated part,
or the number of cycles.  The proof of Lemma~\ref{lem:lemmaB-delocalized}
then gives \(|V(H)|=O(\sqrt m)\).  By Lemma~\ref{lem:spectral-count-C2t},
$\texttt{\#}C_{2t}(H)
        \ge
        \frac{1}{4t}\lambda(H)^{2t}-O_t(|V(H)|^{2t-1}).$
The heavy-edge condition gives
$\lambda(H)
        =
        2\sum_{uv\in E(H)}x_ux_v
        \ge
        2\eta\sqrt m.$
Thus \(\lambda(H)^{2t}=\Omega_{t,\eta}(m^t)\), while $|V(H)|^{2t-1}=O(m^{t-1/2})=o(m^t).$
The error term is therefore \(o(\lambda(H)^{2t})\).
\end{proof}

\begin{lemma}[Localized dense core:  cycles counting]
\label{lem:cycle-localized-nonsparse}
Let \(H\) be an \(m\)-edge graph satisfying the \(\eta\)-heavy-edge
condition for some fixed \(\eta\in(0,\frac14)\), and suppose
\(g=\|\bm{x}\|_\infty m^{1/4}\to\infty\).  Assume also that
\(\lambda(H)\ge\sqrt m\).  Let
$V(H)=A\sqcup C\sqcup D$
be the ACD partition obtained from Theorem~\ref{thm:localized-structure}.
If $e(H[C])=(\alpha+o(1))m$
for some constant \(\alpha\in(0,1]\), then $\texttt{\#}C_{2t}(H)
        \ge
        \left(\frac{1}{4t}-o(1)\right)\lambda(H)^{2t}.$
In particular, $\texttt{\#}C_{2t}(H)
        \ge
        \left(\frac{1}{4t}-o(1)\right)m^t.$
\end{lemma}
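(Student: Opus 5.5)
We follow the proof of Lemma~\ref{lem:lemmaC-localized-nonsparse} line by line, replacing the $K_{t,t}$ counting input by Lemma~\ref{lem:spectral-count-C2t}. Write $\lambda:=\lambda(H)$, $B:=A\sqcup D$, $\lambda_C:=\lambda(H[C])$, $\lambda_B:=\lambda(H[B])$ and $m_C:=e(H[C])=(\alpha+o(1))m$.

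\textbf{Step 1: $\lambda_B$ is small.} By (T1)--(T3) and Theorem~\ref{thm:localized-structure}, we have $m=e(H[C])+e(H[B])+e_H(A,C)$ with $e_H(A,C)=o(m)$. Hence $e(H[B])=(1-\alpha+o(1))m$. Since $D$ is independent, Lemma~\ref{lem:spectral-vertex-cut}(a) applied to $B=A\sqcup D$ gives
\[
\lambda_B\le |A|+\sqrt{e(H[B])}\le\bigl(\sqrt{1-\alpha}+o(1)\bigr)\sqrt m,
\]
using $|A|=o(\sqrt m)$. As $\lambda\ge\sqrt m$ and $\alpha>0$, this yields $\lambda-\lambda_B=\Omega(\sqrt m)$.

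\textbf{Step 2: $\lambda_C$ is close to $\lambda$.} By (T2), the cut between $C$ and $B$ consists only of $A$--$C$ edges, so $e_H(C,B)=e_H(A,C)=o(m)$. Lemma~\ref{lem:spectral-vertex-cut}(b) then gives $(\lambda-\lambda_C)(\lambda-\lambda_B)\le o(m)$. Combined with Step 1, this forces $\lambda-\lambda_C=o(\sqrt m)$. Since $\lambda\ge\sqrt m$, we conclude $\lambda_C=(1-o(1))\lambda$.

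\textbf{Step 3: counting inside the core.} Apply Lemma~\ref{lem:spectral-count-C2t} to $H[C]$:
\[
\texttt{\#}C_{2t}(H)\ge\texttt{\#}C_{2t}(H[C])\ge\frac{1}{4t}\lambda_C^{2t}-O_t(|C|^{2t-1}).
\]
By Theorem~\ref{thm:localized-structure}, $|C|\le\sqrt m\,m^{o(1)}$, so the error term is $m^{t-1/2+o(1)}=o(m^t)$. Since $\lambda\ge\sqrt m$, we have $m^t\le\lambda^{2t}$, so this error is also $o(\lambda^{2t})$. With $\lambda_C^{2t}=(1-o(1))\lambda^{2t}$ from Step 2, we obtain $\texttt{\#}C_{2t}(H)\ge(\frac1{4t}-o(1))\lambda^{2t}$. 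The "in particular" statement follows from $\lambda^{2t}\ge m^t$.

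\textbf{Main obstacle.} The only delicate point is Step 2: obtaining $\lambda_C=(1-o(1))\lambda$ relative to $\lambda$ itself, not merely $\lambda_C\ge(1-o(1))\sqrt m$. This works because $\lambda\le\sqrt{2m}=O(\sqrt m)$, so an additive loss of $o(\sqrt m)$ is a multiplicative factor $1-o(1)$. Unlike the $K_{t,t}$ case, no normalization by $m_C$ appears, so the value of $\alpha$ does not affect the final constant.
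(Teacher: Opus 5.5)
Your proposal is correct and is essentially the paper's proof. The paper cites the argument of Lemma~\ref{lem:lemmaC-localized-nonsparse} for $\lambda(H[C])=\lambda(H)-o(\sqrt m)$, which you reproduce in Steps 1--2, upgrades this to $(1-o(1))\lambda(H)$ via $\lambda(H)\ge\sqrt m$, and applies Lemma~\ref{lem:spectral-count-C2t} to $H[C]$ with $|C|\le\sqrt m\,m^{o(1)}$, exactly as in your Step 3. One small slip is in your closing remark: turning the additive $o(\sqrt m)$ loss into a factor $1-o(1)$ uses the lower bound $\lambda\ge\sqrt m$, as you correctly do in Step 2, not the upper bound $\lambda\le\sqrt{2m}$.
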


\begin{proof}
The proof of Lemma~\ref{lem:lemmaC-localized-nonsparse} gives $\lambda(H[C])=\lambda(H)-o(\sqrt m).$
Since \(\lambda(H)\ge\sqrt m\), this implies $\lambda(H[C])=(1-o(1))\lambda(H).$
Also Theorem~\ref{thm:localized-structure} gives $|C|\le \sqrt m\,m^{o(1)}.$
Applying Lemma~\ref{lem:spectral-count-C2t} to \(H[C]\), we get
\[
        \texttt{\#}C_{2t}(H)\ge
        \texttt{\#}C_{2t}(H[C])\ge
        \frac{1}{4t}\lambda(H[C])^{2t}
        -
        O_t(|C|^{2t-1})     =
        \left(\frac{1}{4t}-o(1)\right)\lambda(H)^{2t},
\]
as $|C|^{2t-1}
        \le
        \left(\sqrt m\,m^{o(1)}\right)^{2t-1}
        =
        o(m^t)
        =
        o(\lambda(H)^{2t}).$
\end{proof}

\begin{lemma}
\label{lem:cycle-sparseC-rowcover}
Let \(H\) be a graph with \(m\to\infty\) edges.  Assume that \(H\)
admits a partition \(V(H)=A\sqcup C\sqcup D\) satisfying
\textup{(T1)}--\textup{(T3)}, and suppose moreover that $e_H(A,C)=o(m),$ $
        |A|=o(\sqrt m),$ and $        e(H[C])=o(m).$
If \(\lambda(H)\ge\sqrt m\), then at least one of the following holds.
\begin{enumerate}
    \item[\rm(i)] $\texttt{\#}C_{2t}(H)
        \ge
        \left(\frac{(t-1)!}{2t^t}-o(1)\right)m^t.$

    \item[\rm(ii)]
    There exist \(R\subseteq A\) with \(1\le |R|\le t-1\), and
    \(B\subseteq D\), such that
    \[
        |B|=(1-o(1))\frac{m}{|R|},
        \qquad
        e_H(A\setminus R,B)=o(m),
        \qquad
        e_H(R,D\setminus B)=o(m),
    \]
    and \(H[R,B]\) is complete bipartite.
\end{enumerate}
\end{lemma}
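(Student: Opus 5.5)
The plan is to follow the proof of Theorem~\ref{thm:sparseC-rowcover} verbatim up to the case split, and change only the counting in Case~(i). First, by (T3), $e(H[C])=o(m)$, $e_H(A,C)=o(m)$ and $|A|=o(\sqrt m)$, we get $e:=e_H(A,D)=(1-o(1))m$. Combining \eqref{eq:sparseC-triangle} with $\lambda(H)\ge\sqrt m$ then gives $\|M\|^2=(1-o(1))e$ for the $A$--$D$ incidence matrix $M$. Next set $\theta:=\sqrt{1-\|M\|^2/e}=o(1)$ and let $R:=R_\theta(M,\bm v)$. Exactly as before, this yields three facts: the row-cover bound \eqref{eq:sparseC-aligned-rowcover}, $e_H(A\setminus R,D)\le\theta e$; degree comparability on $R$; and the common-neighbourhood bound \eqref{eq:sparseC-aligned-common-neighborhood}. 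None of these steps used anything about $K_{t,t}$, so they transfer unchanged. If $|R|\le t-1$, the argument of Case~(ii) of Theorem~\ref{thm:sparseC-rowcover} produces $B=N_D(R)$ with all the stated properties, which is alternative~(ii).

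So the real work is Case~(i), where $r:=|R|\ge t$. As before, put $D_\ast:=\min_{a\in R}d_D(a)$, so that $D_\ast\ge(1-o(1))e/r=\Omega(\sqrt m)$. By \eqref{eq:sparseC-aligned-common-neighborhood}, every $t$-subset $T\subseteq R$ has at least $L:=\lfloor(1-2(t-1)\theta)D_\ast\rfloor$ common neighbours in $D$. I would build $C_{2t}$'s that alternate between $R$ and $D$. Choose an ordered $t$-tuple of distinct vertices $a_1,\dots,a_t\in R$, and then an ordered $t$-tuple of distinct vertices $d_1,\dots,d_t$ in the common neighbourhood of $\{a_1,\dots,a_t\}$. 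The closed walk $a_1d_1a_2d_2\cdots a_td_ta_1$ is then a $2t$-cycle, since all $a_i$--$d_j$ pairs are edges and $D$ is independent. Each cycle arises from exactly $|\Aut(C_{2t})|=4t$ such pairs of ordered tuples. Hence
\[
\texttt{\#}C_{2t}(H)\ \ge\ \frac{r(r-1)\cdots(r-t+1)\cdot L(L-1)\cdots(L-t+1)}{4t}.
\]
This lower bound is valid because distinct ordered choices giving the same cycle are already accounted for by the automorphism count.

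It remains to estimate this bound. Since $L=(1-o(1))e/r$ and $L\to\infty$, the falling factorial $(L)_t$ equals $(1-o(1))(e/r)^t$. For $r\ge t$ we have $(r)_t/r^t\ge t!/t^t$, by the same product estimate as in Theorem~\ref{thm:sparseC-rowcover}. Combining these with $e=(1-o(1))m$ gives
\[
\texttt{\#}C_{2t}(H)\ \ge\ (1-o(1))\frac{t!}{4t\,t^t}m^t=\Bigl(\frac{(t-1)!}{4t^t}-o(1)\Bigr)m^t.
\]
This is off by a factor of $2$ from the constant claimed in~(i), so the count must be sharpened, and this is the main obstacle. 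The fix I would try is to note that the factor lost comes from counting orientation and rotation twice. A $2t$-cycle alternating between $t$ given vertices of $R$ and $t$ given vertices of $D$ corresponds to a pair of cyclic orderings: there are $t!(t-1)!/2$ such cycles on a fixed $K_{t,t}$. By contrast, the ordered-tuple count above gives $t!\,t!/(4t)=t!(t-1)!/4$. So the correct calculation is to count $\binom{r}{t}\binom{L}{t}$ copies of $K_{t,t}$, as in Theorem~\ref{thm:sparseC-rowcover}, and multiply by $t!(t-1)!/2$ Hamiltonian cycles in each. A cycle is determined by its vertex set, so no cycle is counted twice. This gives $(1-o(1))\frac{1}{t!\,t^t}\cdot\frac{t!(t-1)!}{2}m^t=(\frac{(t-1)!}{2t^t}-o(1))m^t$, which is exactly~(i).
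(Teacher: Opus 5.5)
Your final argument is correct and is essentially the paper's proof. You reuse the aligned-row machinery of Theorem~\ref{thm:sparseC-rowcover} unchanged, and when $|R|\ge t$ you count $\binom{r}{t}\binom{L}{t}$ copies of $K_{t,t}$, each containing $t!(t-1)!/2$ copies of $C_{2t}$; no cycle is counted twice because its vertex set determines the $K_{t,t}$ it lies in.

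Your first ordered-tuple count was not actually lossy. Each alternating cycle arises from only $2t$ pairs of ordered tuples, not $4t$, because the starting vertex is forced to lie in $R$; dividing by $2t$ already gives the constant $\frac{(t-1)!}{2t^t}$.
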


\begin{proof}
The proof is identical to the proof of
Theorem~\ref{thm:sparseC-rowcover} until the final counting step.  We
briefly record the modification.  Let \(M\) be the \(A\)--\(D\)
incidence matrix.  As in Theorem~\ref{thm:sparseC-rowcover}, one obtains
a set \(R\subseteq A\) of aligned rows such that
\[
        e_H(A\setminus R,D)=o(m).
\]
If \(|R|\le t-1\), then the same argument gives alternative \textup{(ii)}.

Assume now that \(|R|=r\ge t\).  Let
\[
        D_\ast:=\min_{a\in R} d_D(a).
\]
As in the proof of Theorem~\ref{thm:sparseC-rowcover}, the degrees
\(d_D(a)\), \(a\in R\), are pairwise comparable, and
\[
        D_\ast=(1-o(1))\frac{e}{r},
        \qquad
        e:=e_H(A,D)=(1-o(1))m.
\]
Moreover every \(t\)-subset \(T\subseteq R\) has at least $L=(1-o(1))D_\ast$
common neighbours in \(D\).  For each such \(T\), choosing \(t\) common
neighbours in \(D\) gives a copy of \(K_{t,t}\), which contains
$        \frac{t!(t-1)!}{2}$
copies of \(C_{2t}\).  Hence
\[
        \texttt{\#}C_{2t}(H)\ge
        \binom rt\binom Lt\frac{t!(t-1)!}{2}       \ge
        \left(\frac{(t-1)!}{2t^t}-o(1)\right)e^t  =
        \left(\frac{(t-1)!}{2t^t}-o(1)\right)m^t.
\]
This gives alternative \textup{(i)}.
\end{proof}

\begin{lemma}[Localized sparse core: cycle counting]
\label{lem:cycle-localized-sparse}
Let \(H\) be an \(m\)-edge graph admitting an ACD partition
\(V(H)=A\sqcup C\sqcup D\) satisfying $e_H(A,C)=o(m),$ $       |A|=o(\sqrt m),$ and $e(H[C])=o(m).$
If $\lambda(H)>\lambda(S_{t-1,m}),$
then
\[
        \texttt{\#}C_{2t}(H)
        \ge
        \left(\frac{(t-1)!}{2t^t}-o(1)\right)m^t .
\]
\end{lemma}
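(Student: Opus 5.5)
The plan follows the proof of Lemma~\ref{lem:lemmaD-localized-sparse} essentially verbatim, with Lemma~\ref{lem:cycle-sparseC-rowcover} in place of Theorem~\ref{thm:sparseC-rowcover}. First, by Lemma~\ref{lem:Skm-asymptotic-lambda}, $\lambda(S_{t-1,m})=\sqrt m+\frac{t-2}{2}+O_t(m^{-1/2})\ge\sqrt m$ for large $m$. Hence the hypothesis gives $\lambda(H)\ge\sqrt m$, and Lemma~\ref{lem:cycle-sparseC-rowcover} applies. If alternative (i) holds, the conclusion is exactly the desired bound. It therefore suffices to show that alternative (ii) is incompatible with $\lambda(H)>\lambda(S_{t-1,m})$.

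So assume (ii) holds, with $R\subseteq A$, $1\le|R|\le t-1$, and $B\subseteq D$ such that $H[R,B]$ is complete and $e_H(R,B)=|R||B|=(1-o(1))m$. Call an edge \emph{bad} if it has no endpoint in $R$, and let $E_b$ be the set of bad edges and $U$ the set of their endpoints. Then $|E_b|\le m-e_H(R,B)=o(m)$. If $E_b=\varnothing$, then $R$ is a vertex cover of size at most $t-1$, so $H$ is $\Ktt$-free, and Theorem~\ref{thm-Ktt-free} gives $\lambda(H)\le\lambda(S_{t-1,m})$, a contradiction.

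If $E_b\neq\varnothing$, repeat the Perron-mass argument:
\begin{itemize}
\item Every edge leaving $U$ goes to $R$, so $e_H(U,V\setminus U)\le 2(t-1)|E_b|$.
\item Since $E(H[U])=E_b$, we have $\lambda(H[U])\le\sqrt{2|E_b|}=o(\sqrt m)$.
\item Lemma~\ref{lem:spectral-vertex-cut}(c) then gives $\mu(U)=O_t(|E_b|/m)$.
\item Consequently $w(E_b;\bm x)\le\frac12\lambda(H[U])\mu(U)=O_t(|E_b|^{3/2}/m)=o(|E_b|/\sqrt m)$.
\end{itemize}
Let $H_0:=H-E_b$. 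It has vertex cover $R$, so it is $\Ktt$-free with $m-|E_b|\to\infty$ edges. The Rayleigh quotient with $\bm x$, Theorem~\ref{thm-Ktt-free} and Lemma~\ref{lem:Skm-increment} (with $k=t-1$, $d=|E_b|$) then give
\[
\lambda(H)\le\lambda(H_0)+2w(E_b;\bm x)\le\lambda(S_{t-1,m})-\frac{|E_b|}{2(t-1)(\sqrt m+t-1)}+o\Big(\frac{|E_b|}{\sqrt m}\Big)<\lambda(S_{t-1,m}),
\]
contradicting the hypothesis.

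The main point needing care is that the bad edges must have small Rayleigh weight \emph{relative to} $|E_b|/\sqrt m$, uniformly even when $|E_b|$ is small (say bounded). This is handled because the bound $w(E_b;\bm x)=O_t(|E_b|^{3/2}/m)$ has the factor $|E_b|^{1/2}/\sqrt m=o(1)$, which beats the linear increment $\Omega_t(|E_b|/\sqrt m)$ of Lemma~\ref{lem:Skm-increment}. Nothing specific to cycles enters this part of the argument; the cycle-specific counting was already absorbed into alternative (i) of Lemma~\ref{lem:cycle-sparseC-rowcover}.
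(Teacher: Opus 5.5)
Your proposal is correct and follows the paper's proof essentially step for step. You apply Lemma~\ref{lem:cycle-sparseC-rowcover} (via $\lambda(S_{t-1,m})\ge\sqrt m$) and rule out alternative (ii) with the bad-edge argument from Lemma~\ref{lem:lemmaD-localized-sparse}: the Perron-mass bound $\mu(U)=O_t(|E_b|/m)$ from Lemma~\ref{lem:spectral-vertex-cut}(c), $K_{t,t}$-freeness of $H-E_b$ through the vertex cover $R$, Theorem~\ref{thm-Ktt-free}, and Lemma~\ref{lem:Skm-increment}, with the comparison $O_t(|E_b|^{3/2}/m)=o(|E_b|/\sqrt m)$ being exactly what the paper uses.
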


\begin{proof}
For all sufficiently large \(m\), the assumption gives $\lambda(H)>\lambda(S_{t-1,m})\ge\sqrt m.$
Apply Lemma~\ref{lem:cycle-sparseC-rowcover}.  If alternative
\textup{(i)} occurs, we are done.

Suppose alternative \textup{(ii)} occurs.  The same bad-edge argument as
in the proof of Lemma~\ref{lem:lemmaD-localized-sparse} gives a
contradiction.  Indeed, call an edge of \(H\) \emph{bad} if it is not
incident to \(R\), and let \(E_b\) be the set of bad edges.  Then
$|E_b|=o(m).$
If \(E_b=\varnothing\), then \(R\) is a vertex cover of size at most
\(t-1\), so \(H\) is \(K_{t,t}\)-free.  Theorem~\ref{thm-Ktt-free}
then gives $\lambda(H)\le\lambda(S_{t-1,m}),$
contrary to the assumption.

Thus assume \(|E_b|\ge1\).  Let \(U\) be the set of vertices incident
to an edge of \(E_b\).  As in Lemma~\ref{lem:lemmaD-localized-sparse},
\[
        e_H(U,V(H)\setminus U)=O_t(|E_b|),
        \qquad
        \lambda(H[U])\le \sqrt{2|E_b|}=o(\sqrt m),
\]
and Lemma~\ref{lem:spectral-vertex-cut}\textup{(c)} implies that the
Perron mass of \(U\) is
$        \mu(U)=O_t(|E_b|/m).$
Therefore, for a unit Perron eigenvector \(\bm{x}\) of \(H\),
\[
        \sum_{uv\in E_b}x_ux_v
        =
        o\left(\frac{|E_b|}{\sqrt m}\right).
\]
Let \(H_0:=H-E_b\).  Then every edge of \(H_0\) is incident to \(R\), so
\(H_0\) is \(K_{t,t}\)-free.  Hence
$\lambda(H_0)\le \lambda(S_{t-1,m-|E_b|})$
by Theorem~\ref{thm-Ktt-free}.  By the Rayleigh quotient,
$\lambda(H)
        \le
        \lambda(H_0)+2\sum_{uv\in E_b}x_ux_v.$
Using Lemma~\ref{lem:Skm-increment},
\[
        \lambda(S_{t-1,m-|E_b|})
        \le
        \lambda(S_{t-1,m})
        -
        \frac{|E_b|}{2(t-1)(\sqrt m+t-1)}.
\]
The contribution of the bad edges is
\(o(|E_b|/(\sqrt m+t-1))\), so the last three displays imply $\lambda(H)<\lambda(S_{t-1,m}),$
a contradiction.  Hence alternative \textup{(ii)} cannot occur.
\end{proof}

We now prove Theorem~\ref{thm:sharp-C2t}.

\begin{proof}[Proof of Theorem~\ref{thm:sharp-C2t}]
For \(t=2\), the result is exactly Theorem~\ref{thm:counting-C4}, since
\(C_4=K_{2,2}\) and \(S_{1,m}\) is the star with \(m\) edges.  Hence
assume \(t\ge3\).  Set $c_t:=\frac{(t-1)!}{2t^t}.$
Fix \(\varepsilon>0\).  Suppose, for contradiction, that there is a
sequence of graphs \(G_n\) with
$e(G_n)=m_n\to\infty,$ $\lambda(G_n)>\lambda(S_{t-1,m_n}),$ 
and $\texttt{\#}C_{2t}(G_n)\le (c_t-\varepsilon)m_n^t.$
Let \(\eta:=1/(16t)\).  Applying Lemma~\ref{lem:lemmaA-pruning} to each
\(G_n\), we obtain a subgraph \(H_n\subseteq G_n\) with
$m_n':=e(H_n),$ $
        \alpha_n:=\frac{m_n'}{m_n},$
and a unit Perron eigenvector \(\bm{x}^{(n)}\), such that \(H_n\)
satisfies the \(\eta\)-heavy-edge condition and
$\lambda(H_n)>\lambda(S_{t-1,m_n'}).$
Moreover,
\[
        \frac{\lambda(H_n)}{\sqrt{m_n'}}
        \ge
        1+(1-4\eta)(\alpha_n^{-1/2}-1).
\]
By Claim~\ref{cl:H-linearly-size}, the numbers \(\alpha_n\) are bounded
away from \(0\).  Passing to a subsequence, assume
$\alpha_n\to\alpha\in(0,1],$ $       g_n:=\|\bm{x}^{(n)}\|_\infty(m_n')^{1/4}$
satisfies either \(g_n=O(1)\) or \(g_n\to\infty\).

\smallskip
\noindent\textbf{Case 1: \(\alpha=1\).}
Then \((m_n')^t=(1+o(1))m_n^t\).

If \(g_n=O(1)\), Lemma~\ref{lem:cycle-delocalized} gives
$\texttt{\#}C_{2t}(H_n)
        \ge
        \left(\frac{1}{4t}-o(1)\right)\lambda(H_n)^{2t}.$
Since \(\lambda(H_n)>\lambda(S_{t-1,m_n'})\ge\sqrt{m_n'}\), we get
$\texttt{\#}C_{2t}(H_n)
        \ge
        \left(\frac{1}{4t}-o(1)\right)(m_n')^t.$
As
$        \frac{1}{4t}\ge \frac{(t-1)!}{2t^t}=c_t$
for all \(t\ge2\), this gives $
        \texttt{\#}C_{2t}(G_n)
        \ge
        (c_t-o(1))m_n^t,$
contradicting the assumption.

Now suppose \(g_n\to\infty\).  By Theorem~\ref{thm:localized-structure},
\(H_n\) admits an ACD partition
$V(H_n)=A_n\sqcup C_n\sqcup D_n.$
Passing to a further subsequence, write
$        e(H_n[C_n])=(\beta+o(1))m_n'$
for some \(\beta\in[0,1]\).  If \(\beta>0\), then
Lemma~\ref{lem:cycle-localized-nonsparse} gives
        $\texttt{\#}C_{2t}(H_n)
        \ge
        \left(\frac{1}{4t}-o(1)\right)(m_n')^t
        \ge
        (c_t-o(1))m_n^t.$
If \(\beta=0\), then Lemma~\ref{lem:cycle-localized-sparse} gives
$\texttt{\#}C_{2t}(H_n)
        \ge
        (c_t-o(1))(m_n')^t
        =
        (c_t-o(1))m_n^t.
$
Again this contradicts the assumption.

\smallskip
\noindent\textbf{Case 2: \(\alpha<1\).}
Discarding finitely many terms, assume
$\alpha_n\le 1-\varepsilon_0
$
for some fixed \(\varepsilon_0>0\).  Put
$        \delta_n:=(1-4\eta)(\alpha_n^{-1/2}-1).
$
Then
$        \lambda(H_n)\ge (1+\delta_n)\sqrt{m_n'}.$
Since \(\alpha_n\to\alpha\in(0,1)\), we have
$        \delta_n\to\delta:=(1-4\eta)(\alpha^{-1/2}-1)>0.
$
Choose \(\zeta\in(0,1/3]\) such that
$        (1+\delta_n)^2\ge 1+\zeta
$
for all sufficiently large \(n\).  Lemma~\ref{lem:max} gives
\(g_n=O(1)\).  Therefore Lemma~\ref{lem:cycle-delocalized} yields
\[
        \texttt{\#}C_{2t}(H_n)
        \ge
        \left(\frac{1}{4t}-o(1)\right)\lambda(H_n)^{2t} \ge
        \left(\frac{1}{4t}-o(1)\right)
        (1+\delta_n)^{2t}\alpha_n^t m_n^t.
\]

It remains to compare the constant with \(c_t\).  Let
$\beta_n:=\alpha_n^{-1/2}>1,$ and $
        c:=1-4\eta=1-\frac{1}{4t}.$
Then
$        (1+\delta_n)^{2t}\alpha_n^t
        =
        \left(\frac{1+c(\beta_n-1)}{\beta_n}\right)^{2t}.$
Since \(\beta_n>1\),
$        \frac{1+c(\beta_n-1)}{\beta_n}>c.
$
Hence
$        \frac{1}{4t}(1+\delta_n)^{2t}\alpha_n^t
        >
        \frac{1}{4t}\left(1-\frac{1}{4t}\right)^{2t}.$
By Bernoulli's inequality,
$        \left(1-\frac{1}{4t}\right)^{2t}\ge \frac12.
$
Thus the limiting coefficient is greater than \(1/(8t)\).  For
\(t\ge3\),
$        \frac{1}{8t}>\frac{(t-1)!}{2t^t}=c_t,
$
because \(4(t-1)!<t^{t-1}\).  Therefore
\[
        \texttt{\#}C_{2t}(G_n)
        \ge
        \texttt{\#}C_{2t}(H_n)
        \ge
        (c_t+\Omega_t(1)-o(1))m_n^t,
\]
contradicting
$        \texttt{\#}C_{2t}(G_n)\le (c_t-\varepsilon)m_n^t.
$
This completes the proof.
\end{proof}

\section{Concluding remarks}
\label{sec:Concluding}

We conclude with a few directions to explore.  The main phenomenon is that the same edge-spectral threshold
can have different sharp extremal mechanisms: for \(K_{t,t}\), the sharp
constant is attained by balanced random graphs on \((2+o(1))\sqrt m\)
vertices, while for \(C_{2t}\), it is attained by split graphs.

\smallskip
\noindent\textbf{Second-order \(\bm{C_4}\) supersaturation.}
Let \(f_4(m)\) be the minimum number of copies of \(C_4\) in an
\(m\)-edge graph \(G\) satisfying \(\lambda(G)>\sqrt m\).  Theorem
\ref{thm:counting-C4} gives $f_4(m)=\left(\frac18-o(1)\right)m^2.$
Two different near-extremal constructions, one clique-based and one
split-based, suggest the same second-order term (see~\cite{LiLiuZhang2025NosalBooksC4}).  It would be interesting
to determine whether
\[
        f_4(m)
        =
        \frac18m^2-\frac14m^{3/2}+o(m^{3/2}).
\]

\smallskip
\noindent\textbf{Odd cycles above split thresholds.}
For fixed \(t\ge2\), it is known that the split graph \(S_{t,m}\) is the
edge-spectral extremal construction for forbidding \(C_{2t+1}\).  The
natural supersaturation problem is to determine the sharp asymptotic
constant
\[
        \inf_{\lambda(G)>\lambda(S_{t,m})}
        \frac{\texttt{\#}C_{2t+1}(G)}{m^t}
\]
as \(m\to\infty\).  Unlike the even-cycle case, the extremal mechanism is
not clear from the present methods.

\smallskip
\noindent\textbf{Which Sidorenko graphs have split extremizers?}
The equivalence theorem shows that every Sidorenko graph \(v(H)\le e(H)\) admits a
spectral Sidorenko inequality. 
We developed a unified approach to establishing edge-spectral supersaturation for every Sidorenko graph $H$ when the split graph is an extremal $H$-free graph. 
A broader problem is to determine, for
bipartite Sidorenko graphs \(H\), the sharp edge-spectral supersaturation
constant above the relevant threshold and to classify the
asymptotic extremizers. The results of this paper show that both
balanced random graphs and split graphs can be sharp, even among very
classical Sidorenko graphs. Are there other asymptotic extremizers?

{ 
\smallskip
\noindent\textbf{Acknowledgment.} 
The authors would like to thank Ting-Wei Chao for helpful comments 
on an earlier draft.   
The authors introduced and developed the unified framework for edge-spectral supersaturation, and wrote and verified all proofs presented in this paper. During an early exploratory stage, the second and fourth authors additionally used language-model-based tools to brainstorm candidate proof strategies. Any suggestions arising from these tools served only as informal inspiration. 
}

\frenchspacing


\begin{thebibliography}{10}

\bibitem{BN2007jctb}
B.~Bollob{\'a}s and V.~Nikiforov.
\newblock Cliques and the spectral radius.
\newblock {\em J. Combin. Theory Ser. B}, 97:859--865, 2007.

\bibitem{Cerda2010linear}
J.~Cerd\`{a}.
\newblock {\em Linear Functional Analysis}.
\newblock Number 116 in GSM. American Mathematical Society, Providence, RI;
  Real Sociedad Matem\'{a}tica Espa\~nola, Madrid, 2010.

\bibitem{ChaoYu2026JLMS}
T.-W. Chao and H.-H.~H. Yu.
\newblock When entropy meets {Turán}: {New} proofs and hypergraph {Turán}
  results.
\newblock {\em J. Lond. Math. Soc.}, 113(3):Paper No. e70473, 2026.

\bibitem{CLLLN2026}
H.~Chen, J.~Li, Y.~Li, L.~Liu, and B.~Ning.
\newblock An exponentially small gap of the {Perron} vector on independent
  sets, 2026.
\newblock arXiv:2604.24077.

\bibitem{Chung1997SpectralGraphTheory}
F.~R.~K. Chung.
\newblock {\em Spectral Graph Theory}, volume~92 of {\em CBMS Regional
  Conference Series in Mathematics}.
\newblock American Mathematical Society, Providence, RI, 1997.

\bibitem{ConlonFoxSudakov2010}
D.~Conlon, J.~Fox, and B.~Sudakov.
\newblock An approximate version of {Sidorenko}'s conjecture.
\newblock {\em Geom. Funct. Anal.}, 20(6):1354--1366, 2010.

\bibitem{ConlonKimLeeLee2018}
D.~Conlon, J.~H. Kim, C.~Lee, and J.~Lee.
\newblock Some advances on {S}idorenko's conjecture.
\newblock {\em J. Lond. Math. Soc. (2)}, 98(3):593--608, 2018.

\bibitem{ConlonLee2017}
D.~Conlon and J.~Lee.
\newblock Finite reflection groups and graph norms.
\newblock {\em Adv. Math.}, 315:130--165, 2017.

\bibitem{CoreglianoRazborov2021Biregularity}
L.~N. Coregliano and A.~A. Razborov.
\newblock Biregularity in {Sidorenko}'s conjecture.
\newblock {\em arXiv preprint arXiv:2108.06599}, 2021.

\bibitem{Erd1962a}
P.~Erd\H{o}s.
\newblock On a theorem of {Rademacher--Tur\'{a}n}.
\newblock {\em Illinois J. Math.}, 6:122--127, 1962.

\bibitem{Erdos1964}
P.~Erdős.
\newblock On the number of triangles contained in certain graphs.
\newblock {\em Canadian Mathematical Bulletin}, 7(1):53--56, 1964.

\bibitem{FangLinZhai2026}
L.~Fang, H.~Lin, and M.~Zhai.
\newblock Counting color-critical subgraphs under {Nikiforov's} condition,
  2026.
\newblock arXiv:2603.14964.

\bibitem{FoxLovasz2017Adv}
J.~Fox and L.~M. Lov{\'a}sz.
\newblock A tight bound for green's arithmetic triangle removal lemma in vector
  spaces.
\newblock {\em Adv. Math.}, 321:287--297, 2017.

\bibitem{Grafakos2014ClassicalFourierAnalysis}
L.~Grafakos.
\newblock {\em Classical Fourier Analysis}.
\newblock Graduate Texts in Mathematics. Springer, New York, 3rd edition, 2014.

\bibitem{Hatami2010}
H.~Hatami.
\newblock Graph norms and {Sidorenko}'s conjecture.
\newblock {\em Israel J. Math.}, 175:125--150, 2010.

\bibitem{HornJohnson2012MatrixAnalysis}
R.~A. Horn and C.~R. Johnson.
\newblock {\em Matrix Analysis}.
\newblock Cambridge University Press, 2 edition, Oct. 2012.

\bibitem{KimLeeLee2016}
J.~H. Kim, C.~Lee, and J.~Lee.
\newblock Two approaches to {Sidorenko}'s conjecture.
\newblock {\em Trans. Amer. Math. Soc.}, 368(7):5057--5074, 2016.

\bibitem{LZZ2024}
S.~Li, S.~Zhao, and L.~Zou.
\newblock Spectral extrema of graphs with fixed size: {Forbidden} a fan graph,
  a friendship graph or a theta graph.
\newblock {\em J. Graph Theory}, 110(4):483–495, 2025.

\bibitem{LZS2024}
X.~Li, M.~Zhai, and J.~Shu.
\newblock A {Brualdi--Hoffman--Tur\'{a}n} problem on cycles.
\newblock {\em European J. Combin.}, 120:No. 103966, 2024.

\bibitem{LiLiuZhang2025EdgeSpectralESS}
Y.~Li, H.~Liu, and S.~Zhang.
\newblock An edge-spectral {Erd\H{o}s--Stone--Simonovits} theorem and its
  stability, 2025.
\newblock arXiv:2508.15271.

\bibitem{LiLiuZhang2025ColorCritical}
Y.~Li, H.~Liu, and S.~Zhang.
\newblock Edge-spectral {Tur{\'a}n} theorems for color-critical graphs with
  applications, 2025.
\newblock arXiv:2511.15431.

\bibitem{LiLiuZhang2025NosalBooksC4}
Y.~Li, H.~Liu, and S.~Zhang.
\newblock More on {Nosal}'s spectral theorem: {Books} and {$4$}-cycles.
\newblock {\em J. Combin. Theory Ser. B}, 179:219--249, 2026.

\bibitem{LL2009}
B.~Liu and M.-H. Liu.
\newblock On the spread of the spectrum of a graph.
\newblock {\em Discrete Math.}, 309:2727--2732, 2009.

\bibitem{LLLY2025}
C.~Liu, J.~Li, S.~Li, and Y.~Yu.
\newblock A {Brualdi--Hoffman--Tur\'{a}n} problem on theta graph.
\newblock {\em Adv. in Appl. Math.}, 173:Paper No.103000, 2026.

\bibitem{LiuPikhurkoStaden2020}
H.~Liu, O.~Pikhurko, and K.~Staden.
\newblock The exact minimum number of triangles in graphs of given order and
  size.
\newblock {\em Forum of Math., Pi}, 8(e8):144, 2020.

\bibitem{LiuNing2026}
L.~Liu and B.~Ning.
\newblock Local properties of the spectral radius and {Perron} vector in
  graphs.
\newblock {\em J. Combin. Theory Ser. B}, 176:241--253, 2026.

\bibitem{LLZ2025}
Z.~Lou, L.~Lu, and M.~Zhai.
\newblock A refinement on spectral {Mantel's} theorem.
\newblock {\em European J. Combin.}, 127:Paper No. 104142, 2025.

\bibitem{LovaszSauermann2019PLMS}
L.~M. Lov{\'a}sz and L.~Sauermann.
\newblock A lower bound for the $k$-multicolored sum-free problem in
  $\mathbb{Z}_m^n$.
\newblock {\em Proc. Lond. Math. Soc.}, 119(1):55--103, 2019.

\bibitem{MY2025}
J.~Ma and L.-T. Yuan.
\newblock Supersaturation beyond color-critical graphs.
\newblock {\em Combinatorica}, 45(2):Paper No. 18, 2025.

\bibitem{Mubayi2010CountingSubstructuresI}
D.~Mubayi.
\newblock Counting substructures {I}: {C}olor critical graphs.
\newblock {\em Adv. Math.}, 225(5):2731--2740, 2010.

\bibitem{Niki2002}
V.~Nikiforov.
\newblock Some inequalities for the largest eigenvalue of a graph.
\newblock {\em Combin. Probab. Comput.}, 11(2):179--189, 2002.

\bibitem{Nikiforov2007C4FreeMaxSpectralRadius}
V.~Nikiforov.
\newblock The maximum spectral radius of {$C_4$}-free graphs of given order and
  size.
\newblock {\em Linear Algebra Appl.}, 430(11):2898--2905, 2009.

\bibitem{Niki2021}
V.~Nikiforov.
\newblock On a theorem of {Nosal}, 2021.
\newblock arXiv:2104.12171.

\bibitem{NZ2021}
B.~Ning and M.~Zhai.
\newblock Counting substructures and eigenvalues {I}: {Triangles}.
\newblock {\em European J. Combin.}, 110:Paper No. 103685, 12, 2023.

\bibitem{NZ2021b}
B.~Ning and M.~Zhai.
\newblock Counting substructures and eigenvalues {II}: {Quadrilaterals}.
\newblock {\em Electron. J. Combin.}, 32(4):Paper No. 4.1, 2025.

\bibitem{PikhurkoYilma2017Supersaturation}
O.~Pikhurko and Z.~B. Yilma.
\newblock Supersaturation problem for color-critical graphs.
\newblock {\em J. Combina. Theory, Ser. B}, 123:148--185, 2017.

\bibitem{Reiher2016}
C.~Reiher.
\newblock The clique density theorem.
\newblock {\em Ann. of Math.}, 184(3):683--707, 2016.

\bibitem{Sidorenko1993CorrelationInequality}
A.~Sidorenko.
\newblock A correlation inequality for bipartite graphs.
\newblock {\em Graphs and Combinatorics}, 9(2--4):201--204, 1993.

\bibitem{Szegedy2015SparseGraphLimits}
B.~Szegedy.
\newblock Sparse graph limits, entropy maximization and transitive graphs.
\newblock {\em arXiv preprint arXiv:1504.00858}, 2015.

\bibitem{Tao2008TensorPower}
T.~Tao.
\newblock The tensor power trick.
\newblock What's New (personal blog), 2008.
\newblock
  \url{https://terrytao.wordpress.com/2008/08/25/tricks-wiki-article-the-tensor-product-trick/}.

\bibitem{ZhaiLiLou2026}
M.~Zhai, R.~Li, and Z.~Lou.
\newblock Advances on two spectral conjectures regarding booksize of graphs,
  2026.
\newblock arXiv:2601.10163.

\bibitem{ZLS2021}
M.~Zhai, H.~Lin, and J.~Shu.
\newblock Spectral extrema of graphs with fixed size: {Cycles} and complete
  bipartite graphs.
\newblock {\em European J. Combin.}, 95:No. 103322, 2021.

\end{thebibliography}

\end{document}